\newcommand{\ben}{\begin{enumerate}}
\newcommand{\een}{\end{enumerate}}
\newcommand{\ble}{\begin{lem}}
\newcommand{\ele}{\end{lem}}
\newcommand{\bth}{\begin{thm}}
\renewcommand{\eth}{\end{thm}}
\newcommand{\bpr}{\begin{prop}}
\newcommand{\epr}{\end{prop}}
\newcommand{\bco}{\begin{cor}}
\newcommand{\eco}{\end{cor}}
\newcommand{\bcon}{\begin{conj}}
\newcommand{\econ}{\end{conj}}
\newcommand{\bde}{\begin{defn}}
\newcommand{\ede}{\end{defn}}
\newcommand{\bex}{\begin{exa}}
\newcommand{\eex}{\end{exa}}
\newcommand{\barr}{\begin{array}}
\newcommand{\earr}{\end{array}}
\newcommand{\btab}{\begin{tabular}}
\newcommand{\etab}{\end{tabular}}
\newcommand{\beq}{\begin{equation}}
\newcommand{\eeq}{\end{equation}}
\newcommand{\bea}{\begin{eqnarray*}}
\newcommand{\eea}{\end{eqnarray*}}
\newcommand{\bal}{\begin{align*}}
\newcommand{\bce}{\begin{center}}
\newcommand{\ece}{\end{center}}
\newcommand{\bpi}{\begin{picture}}
\newcommand{\epi}{\end{picture}}
\newcommand{\bpp}{\begin{picture}}
\newcommand{\epp}{\end{picture}}
\newcommand{\bfi}{\begin{figure} \begin{center}}
\newcommand{\efi}{\end{center} \end{figure}}
\newcommand{\bprf}{\begin{proof}}
\newcommand{\eprf}{\end{proof}\medskip}
\newcommand{\capt}{\caption}
\newcommand{\bsl}{\begin{slide}{}}
\newcommand{\esl}{\end{slide}}
\newcommand{\bfr}{\begin{frame}}
\newcommand{\efr}{\end{frame}}
\newcommand{\hqed}{\hfill \qed}
\newcommand{\eqed}[1]{$\textcolor{white}{\qed}\hfill{\dil#1}\hfill\qed$}
\newcommand{\ul}{\underline}
\newcommand{\ol}{\overline}
\newcommand{\hso}[1]{\hspace{-1pt}}
\newcommand{\emp}{\emptyset}
\newcommand{\sbe}{\subseteq}
\newcommand{\spe}{\supseteq}
\newcommand{\ptn}{\vdash}
\newcommand{\gauss}[2]{\genfrac{[}{]}{0pt}{}{#1}{#2}}
\def\<{\langle}
\def\>{\rangle}
\newcommand{\ree}[1]{(\ref{#1})}
\newcommand{\ra}{\rightarrow}
\newcommand{\be}{\beta}
\newcommand{\ga}{\gamma}
\newcommand{\de}{\delta}
\newcommand{\la}{\lambda}
\newcommand{\si}{\sigma}
\newcommand{\cM}{{\cal M}}
\newcommand{\cN}{{\cal N}}
\DeclareMathOperator{\area}{area}
\DeclareMathOperator{\inc}{inc}
\DeclareMathOperator{\inv}{inv}
\DeclareMathOperator{\maj}{maj}
\DeclareMathOperator{\st}{st}
\DeclareMathOperator{\Par}{Par}
\newcommand{\dil}{\displaystyle}
\newtheorem{thm}{Theorem}[section]
\newtheorem{prop}[thm]{Proposition}
\newtheorem{cor}[thm]{Corollary}
\newtheorem{lem}[thm]{Lemma}
\newtheorem{conj}[thm]{Conjecture}
\newtheorem{exa}[thm]{Example}
\DeclareMathOperator{\lb}{lb}
\DeclareMathOperator{\ls}{ls}
\DeclareMathOperator{\rb}{rb}
\DeclareMathOperator{\rs}{rs}
\DeclareMathOperator{\lrs}{lrs}
\DeclareMathOperator{\LB}{LB}
\DeclareMathOperator{\LS}{LS}
\DeclareMathOperator{\RB}{RB}
\DeclareMathOperator{\RS}{RS}
\begin{document}
\pagestyle{plain}

\title{Restricted growth function patterns and statistics\thanks{All authors' research partially supported by NSA grant H98230-13-1-0259, by NSF grant DMS-1062817, and by the SURIEM REU at Michigan State University}\\[-5pt]
}
\author{Lindsey R. Campbell\\[-5pt]
\small Department of Procurement, Detroit Diesel Corporation\\[-5pt]
\small Detroit, MI 48239, USA, {\tt Lindsey.Reppuhn11@gmail.com} \\
Samantha Dahlberg\\[-5pt]
\small Department of Mathematics, Michigan State University,\\[-5pt]
\small East Lansing, MI 48824-1027, USA, {\tt dahlbe14@msu.edu}\\
Robert Dorward\\[-5pt]
\small Department of Mathematics, Oberlin College,\\[-5pt]
\small Oberlin, OH 44074, USA, {\tt bobbydorward@gmail.com}\\
Jonathan Gerhard\\[-5pt]
\small Department of Mathematics, James Madison University\\[-5pt]
\small Harrisonburg, VA 22801, USA, {\tt gerha2jm@dukes.jmu.edu} \\
Thomas Grubb\\[-5pt]
\small Department of Mathematics, Michigan State University,\\[-5pt]
\small East Lansing, MI 48824-1027, USA, {\tt grubbtho@msu.edu}\\
Carlin Purcell\\[-5pt]
\small Department of Mathematics, Vassar College, 124 Raymond Ave. Box 1114, \\[-5pt]
\small Poughkeepsie, NY 12604 USA, {\tt carlinpurcell@gmail.com}\\
Bruce E. Sagan\\[-5pt]
\small Department of Mathematics, Michigan State University,\\[-5pt]
\small East Lansing, MI 48824-1027, USA, {\tt sagan@math.msu.edu}
}

\date{\today\\[10pt]
	\begin{flushleft}
	\small Key Words: generating function, Gaussian polynomial, $\lb$, $\ls$, noncrossing partition, nonnesting partition,  pattern,  $\rb$, $\rs$, restricted growth function, partition, statistic, two-colored Motzkin path
	                                       \\[5pt]
	\small AMS subject classification (2010):  05A05, 05A15,  05A18, 05A19
	\end{flushleft}}

\maketitle

\begin{abstract}
A restricted growth function (RGF) of length $n$ is a sequence $w=w_1 w_2\dots w_n$  of positive integers such that $w_1=1$ and $w_i \le 1+\max\{w_1,\dots, w_{i-1}\}$ for $i\ge2$.   RGFs are of interest because they are in natural bijection with set partitions of $\{1,2,\dots,n\}$.   RGF $w$ avoids  RGF $v$ if there is no subword of $w$ which standardizes to $v$.  We study the generating functions $\sum_{w\in R_n(v)} q^{\st(w)}$ where $R_n(v)$ is the set of  RGFs of length $n$ which avoid $v$ and $\st(w)$ is any of the four fundamental statistics on RGFs defined by Wachs and White.  These generating functions exhibit interesting connections with integer partitions and two-colored Motzkin paths, as well as noncrossing and nonnesting set partitions.
\end{abstract} 
 
%%%%%%%%%%%%%%%%%%%%%%%%%%%%%%%%
%
% 	Introduction
%
%%%%%%%%%%%%%%%%%%%%%%%%%%%%%%%%

\section{Introduction}

Recently, there has been a flurry of activity looking at the distribution of statistics over pattern classes in various objects.  For example, see~\cite{cddds:zas,ceks:ipt,ddjss:pps,ds:paa,gm:psq,gs:sps,kil:pcc}. There are two notions of pattern containment for set partitions, one obtained by standardizing a subpartition and one obtained by standardizing a subword of the corresponding restricted growth function.  In~\cite{ddggprs:spp}, the present authors studied the distribution of four fundamental statistics of Wachs and White~\cite{ww:pqs} over avoidance classes using the first definition.  The purpose of this paper is to carry out an analogous investigation for the second.

Let us begin by defining our terms.  Consider a finite set $S$.  A {\em set partition} $\si$ of $S$ is a family of nonempty subsets $B_1,\dots,B_k$ whose disjoint union is $S$, written $\si=B_1/\dots/B_k\ptn S$. The $B_i$ are called {\em blocks} and we will usually suppress the set braces and commas in each block for readability.  We will be particularly interested in set partitions of $[n]:=\{1,2,\dots,n\}$ and will use the notation
$$
\Pi_n=\{\si\ :\ \si\ptn[n]\}.
$$  
To illustrate $\si=145/2/3\ptn[5]$.  If $T\sbe S$ and $\si=B_1/\dots/B_k\ptn S$ then there is a corresponding {\em subpartion} $\si'\ptn T$ whose blocks are the nonempty intersections $B_i\cap T$.  To continue our example, if $T=\{2,4,5\}$ then we get the subpartition $\si'=2/45\ptn T$.

The concept of pattern is built on the standardization map.  Let $O$ be an object with labels which are positive integers.  The {\em standardization} of $O$, $\st(O)$, is obtained by replacing all occurrences  of the smallest label in $O$ by $1$, all occurrences of the next smallest by $2$, and so on.  Say that $\si\ptn[n]$ {\em contains $\pi$ as a pattern} if it contains a subpartition $\si'$ such that $\st(\si')=\pi$.  
In this case $\si'$ is called an {\em occurrence} or {\em copy} of $\pi$ in $\si$.
Otherwise, we say that $\si$ {\em avoids}  $\pi$ and let
$$
\Pi_n(\pi) = \{\si\in\Pi_n\ :\ \text{$\si$ avoids $\pi$}\}.
$$
In our running example, $\si=145/2/3$ contains $\pi=1/23$ since $\st(2/45)=1/23$.  But $\si$ avoids $12/3$ because if one takes any two elements from the first block of $\si$ then it is impossible to find an element from another block bigger than both of them.  Klazar~\cite{kla:aas,kla:cpsI,kla:cpsII} was the first to study this approach to set partition patterns.  For more recent work, see the paper of Bloom and Saracino~\cite{bs:pas}.

To define the second notion of pattern containment for set partitions, we need to introduce restricted growth functions.  A sequence $w=w_1w_2\dots w_n$  of positive integers is a {\em restricted growth function} (RGF) if it satisfies the conditions
\ben
\item  $w_1=1$, and
\item  for $i\ge2$ we have
\beq
\label{RGF}
w_i\le 1+\max\{w_1,\dots, w_{i-1}\}.
\eeq
\een
For example, $w=11213224$ is an RGF, but $w=11214322$ is not since $4>1+\max\{1,1,2,1\}$.  The number of elements of $w$ is called its {\em length}  and denoted $|w|$.  Define
$$
R_n=\{w\ :\ \text{$w$ is an RGF of length $n$}\}.
$$
For RGFs and even more general sequences we will use $w_i$ as the notation for the $i$th element of $w$.

To connect RGFs with set partitions, we will henceforth write all $\si=B_1/B_2/\dots/B_k\ptn[n]$ in {\em standard form} which means that  
$$
\min B_1 <\min B_2<\dots<\min B_k.
$$
Note that this implies $\min B_1=1$.  Given $\si=B_1/\dots/B_k\ptn [n]$ in standard form, we construct an associated word $w(\si)=w_1\dots w_n$ where
$$
\text{$w_i = j$ if and only if $i\in B_j$.}
$$
More generally, for any set $P$ of set partitions, we let $w(P)$ denote the set of $w(\si)$ for $\si\in P$.
Returning to our running example, we have $w(145/2/3)=12311$.  It is easy to see that $\si$ being in standard form implies $w(\si)$ is an RGF and  that the map $\si\mapsto w(\si)$ is a bijection $\Pi_n\ra R_n$.

We can now define patterns in terms of RGFs.  Given RGFs $v,w$ we call $v$ a {\em pattern} in  $w$ if there is a subword $w'$ of $w$ with $\st(w')=v$.  The use of the terms ``occurrence,"  ``copy," and ``avoids" in this setting are the same as for set partitions.   Given $v$ we define the corresponding {\em avoidance class}
$$
R_n(v)=\{w\in R_n\ :\ \text{$w$ avoids $v$}\}.
$$
Similarly define, for any set $V$ of RGFs,
$$
R_n(V)=\{w\in R_n\ :\ \text{$w$ avoids every $v\in V$}\}.
$$
As before, consider $w=w(145/2/3)=12311$.  Then $w$ contains $v=121$ because either of the subwords $121$ or $131$ of $w$ standardize to $v$.  However, $w$ avoids $v=122$ since the only repeated elements of $w$ are ones.  Note that this is in contrast to the fact that $145/2/3$ contains $1/23$ where $w(1/23)=122$.  In general, we have the following result whose proof is straight forward and left to the reader.
\bpr
\label{R-Pi}
Suppose that partitions $\pi$ and $\si$ have RGFs $v=w(\pi)$ and $w=w(\si)$.  If $w$ contains $v$ then $\si$ contains $\pi$, but not necessarily conversely.  Equivalently, we have $R_n(v)\spe w(\Pi_n(\pi))$.\hqed
\epr

Sagan~\cite{sag:pas} described the sets $R_n(v)$ for all $v\in R_3$.  To state this result, we need a few more definitions.  The {\em initial run} of an RGF $w$ is the longest prefix of the form $12\dots m$.  Write $a^l$ to indicate a string of $l$ copies of the integer $a$.  Call the word $w$ {\em layered} if it has the form $w=1^{n_1} 2^{n_2}\dots m^{n_m}$, equivalently, if it is weakly increasing.  Although the next result is not new, we include the proof since it is essential in much of what follows.
\bth[\cite{sag:pas}]
\label{rgf:len3}
We  have the following characterizations.
\ben
\item $R_n(111)=\{ w\in R_n\ :\ \text{every element of $w$ appears at most twice}\}$.
\item $R_n(112)=\{ w\in R_n\ :\  \text{$w$ has initial run $12\dots m$ and $m\ge w_{m+1}\ge w_{m+2}\ge\dots\ge w_n$}\}$.
\item $R_n(121)=\{ w\in R_n\ :\ \text{$w$ is layered}\}$.
\item $R_n(122)=\{ w\in R_n\ :\ \text{every element $j\ge 2$ of $w$ appears only once}\}$.
\item $R_n(123)=\{ w\in R_n\ :\ \text{$w$ contains only $1$s and $2$s}\}$.
\een
\eth
\bprf
In all cases, it is easy to see that the $w$ described on the right-hand side are in the avoidance class.  So we will concentrate on proving the reverse containments.  This is most easily done by contradiction.

\medskip

1.  If $w$ has some $j$ appearing three or more times then $jjj$ is an occurrence of $111$ in $w$.

\medskip

2.  If $w$ does not have this form, then consider the first index $i\ge m$ such that $w_i<w_{i+1}$.  By definition of initial run, $i>m$.  So, since this is the first such index, there must be $w_h=w_i$ for some $w_h$ in the initial run.  But then 
$w_h w_i w_{i+1}$ is a copy of $112$.

\medskip

3.   If $w$ is not layered, then consider its longest layered prefix $w_1\dots w_i$.  Since $w$ is an RGF and the prefix is maximal, we must have $i\ge2$ and $w_i>w_{i+1}$.  Similar to the previous proof, there must be $w_h$ in the layered prefix with $w_h=w_{i+1}$.  This makes $w_h w_i w_{i+1}$ is a copy of $121$.

\medskip

4.  Suppose some $j\ge2$ appears more than once.  Since every RGF begins with a $1$, we have an occurence $1jj$ of $122$.

\medskip

5.  Condition~\ree{RGF} implies that if $w$ contains a number larger than $2$ then it must contain a $3$ and, in fact, the subword $123$.
\eprf

Using this result, it is not hard to compute the cardinalities of  the  classes.
\bco[\cite{sag:pas}]
We have
$$
\#R_n(112)=\#R_n(121)=\#R_n(122)=\#R_n(123)=2^{n-1}
$$
and
$$
\#R_n(111)=\sum_{i\ge0} \binom{n}{2i}(2i)!!
$$
where $(2i)!!=1\cdot 3\cdot 5 \cdot\ \dots\ \cdot (2i-1)$.\hqed
\eco

Our object, in part, is to prove generalizations of the formulae in this corollary using the statistics of Wachs and White and their generating functions.  They defined four statistics on RGFs denoted $\lb$, $\ls$, $\rb$, and $\rs$ where the letters l, r, b, and s stand for left, right, bigger, and smaller, respectively.  We will explicitly define the $\lb$ statistic and the others are defined analogously.  Given a word $w=w_1 w_2\dots w_n$, let
$$
\lb(w_j)=\#\{w_i\ :\ \text{$i<j$ and $w_i>w_j$}\}.
$$
Otherwise put, $\lb(w_j)$ counts the number of integers which are to the left of $w_j$ in $w$ and bigger than $w_j$.  Note that multiple copies of the same integer which is left of and bigger than $w_j$ are only counted once.  Note, also, that $\lb(w_j)$ also depends on $w$ and not just the value of $w_j$.  But context will ensure that there is no confusion.  For an example, if $w=12332412$ then for $w_5=2$ we have $\lb(w_5)=1$ since three is the only larger integer which occurs before the two.  For $w$ itself, define
$$
\lb(w)=\lb(w_1)+\lb(w_2)+\dots+\lb(w_n).
$$
Continuing our example,
$$
\lb(12332412)=0+0+0+0+1+0+3+2=6.
$$
Finally, given an RGF, $v$, we consider the generating function
$$
\LB_n(v)=\LB_n(v;q)=\sum_{w\in R_n(v)} q^{\lb(w)}
$$
and similarly for the other three statistics.  Sometimes we will be able to prove things about multivariate generating functions such as
$$
F_n(v)=F_n(v;q,r,s,t)=\sum_{w\in R_n(v)} q^{\lb(v)} r^{\ls(v)} s^{\rb(v)} t^{\rs(v)}.
$$

As noted in Proposition~\ref{R-Pi}, if $v=w(\pi)$ then we always  have $R_n(v)\spe w(\Pi_n(\pi))$.  But for certain $\pi$ we have equality.  In particular, as shown in~\cite{sag:pas}, this is true for $\pi=123, 13/2, 1/2/3$ and the corresponding $v=111,121,123$.  So these patterns will not, for the most part, be analyzed in what follows since their generating functions were computed in~\cite{ddggprs:spp}.

The rest of this paper is organized as follows.  In the next section, we consider the generating functions for the two remaining patterns of length three, namely $v=112$ and $122$.   Interestingly, integer partitions and $q$-binomial coefficients come into play as well as connections with the polynomials for the remaining patterns of length three.
In Section~\ref{mpl}, we consider the classes $R_n(V)$ for all $V\sbe R_3$ containing two or more patterns.  The next three sections deal with RGFs of length longer than three.  Section~\ref{rfl} gives recursive methods for calculating generating functions for longer patterns in terms of shorter ones.  The following two sections are concerned with the patterns $1212$ and $1221$ which are connected with noncrossing and nonnesting set partitions, respectively.  These results are closely related to two-colored Motzkin paths.  We end with a section of comments and open questions.

%%%%%%%%%%%%%%%%%%%%%%%%%%%%%%%%%%%%%

\section{Single patterns of length $3$}
\label{spl}

\subsection{Patterns related to integer partitions in a rectangle}
In this subsection, we show bijectively that three of the generating functions under consideration are the same. Moreover, the common value can be expressed in terms of the $q$-binomial coefficients which count integer partitions in a rectangle.
 First we need some definitions about $q$-analogues and integer partitions.

We let $[n]_q = 1+ q +q^2 +\dots +q^{n-1}$. We can now define a $q$-analogue of the factorial, letting $[n]_q! = [1]_q[2]_q\cdots[n]_q$. Finally, we define the \emph{$q$-binomial coefficients} or \emph{Gaussian polynomials} as
$$ \gauss{n}{k}_q = \frac{[n]_q!}{[k]_q![n-k]_q!}.$$
By convention, $\gauss{n}{k}_q = 0$ if $k<0$ or $k>n$.

A \emph{partition $\lambda = (\lambda_1,\lambda_2,\dots,\lambda_k) $ of an integer} $t$ is a weakly decreasing sequence of positive integers such that $\sum _{i=1}^k \lambda_i = t$. We call the $\lambda_i$ \emph{parts} and let $|\lambda| = \sum _{i=1}^k \lambda_i$. 
We let $\Par$ denote the set of all integer partitions.
The \emph{Young diagram} of a partiton $\lambda$ is an array of boxes with $k$ left-justified rows, where the $i$th row has $\lambda_{i}$ boxes. For example the partition $\lambda = (5,5,4,3,3)$ would correspond to the Young diagram in Figure~\ref{yd}. Sometimes we will need to refer to particular boxes in the Young diagram. We let $(i,j)$ denote the box in the $i$th row and $j$th column.
\begin{figure}
\begin{center}
\begin{tikzpicture}[scale = .6]
\draw (0,0) rectangle (5,1);
\draw (0,1) rectangle (1,-4);
\draw (2,0) --  (2,1);
\draw (3,0) --  (3,1);
\draw (4,0) --  (4,1);
\draw (0,-1) --  (1,-1);
\draw (0,-2) --  (1,-2);
\draw (0,-3) --  (1,-3);
\draw (1,-1) rectangle (5,0);
\draw (1,0) rectangle (2,-4);
%\draw[step=1.0,black,dotted] (0,-5) grid (5,0);
\draw (3,-1) --  (3,0);
\draw (4,-1) --  (4,0);
\draw (1,-2) --  (2,-2);
\draw (1,-3) --  (2,-3);
\draw (2,-2) rectangle (4,-1);
\draw (2,-1) rectangle (3,-4);
\draw (2,-3) --  (3,-3);
%\draw (-1.5,-1) node {$\lambda$ in $\beta:\qquad$};
 \end{tikzpicture}
\end{center}
\capt{The Young diagram for $\lambda=(5,5,4,3,3)$ \label{yd}}
\end{figure}

If $\beta$ is an $r\times \ell$ rectangle, written $\be=r\times\ell$, and $\lambda= (\lambda_1,\lambda_2,\dots,\lambda_k)$ is a partition, we say that the Young diagram of $\lambda$ \emph{fits inside} $\beta$ if $k\leq r$ and $\lambda_1\leq \ell$.  We will denote this by $\lambda\subseteq \beta$. When $\lambda\subseteq \beta$, we will draw $\la$ and $\be$ so superimposed so that their $(1,1)$ boxes coincide, as can be seen in Figure~\ref{ydr}.
For $\beta=r\times\ell$ it is well-known that
$$\gauss{r+\ell}{\ell}_q = \sum_{\lambda\subseteq\beta} q^{|\lambda|}.$$

Now that we have the proper terminology, we can state our first equidistribution theorem.
\begin{figure}
\begin{center}
\begin{tikzpicture}[scale = .6]
\draw (0,0) rectangle (5,1);
\draw (0,1) rectangle (1,-4);
\draw (2,0) --  (2,1);
\draw (3,0) --  (3,1);
\draw (4,0) --  (4,1);
\draw (0,-1) --  (1,-1);
\draw (0,-2) --  (1,-2);
\draw (0,-3) --  (1,-3);
\draw (1,-1) rectangle (5,0);
\draw (1,0) rectangle (2,-4);
\draw[step=1.0,black,dotted] (0,-5) grid (5,0);
\draw (3,-1) --  (3,0);
\draw (4,-1) --  (4,0);
\draw (1,-2) --  (2,-2);
\draw (1,-3) --  (2,-3);
\draw (2,-2) rectangle (4,-1);
\draw (2,-1) rectangle (3,-4);
\draw (2,-3) --  (3,-3);
\draw (-1.5,-1);
 \end{tikzpicture}
\end{center}
\capt{The Young diagram for $\lambda=(5,5,4,3,3)$ in the $6\times 5$ rectangle $\beta$ \label{ydr}}
\end{figure}
\bth\label{connectLB112} We have
$$ \LB_n(112) = \RS_n(112) = \LB_n(122) =\sum_{t\ge 0} \gauss{n-1}{t}_q.$$
\eth
In other words, each of the above polynomials is the generating function for integer partitions counted with multiplicity given by the number of rectangles into which they fit.
We will establish Theorem \ref{connectLB112} through four propositions.
%problem with bold symbol

First, we need a few more definitions. A sequence of integers  $u_1\dots u_n$ is called \emph{unimodal} if there exists an index $i$ with
$$
u_1\le u_2\le \dots \le u_i \ge u_{i+1}\ge \dots \ge u_n.
$$
We define a \emph{rooted unimodal composition} $u = u_1\ldots \boldsymbol{u_m}\dots u_n$ to be a sequence of nonnegative integers, together with a distinguished element called the {\em root} and displayed in boldface type, having the following properties:

\medskip

1. $u$ is unimodal. 

\medskip

2. $u_1=u_n = 0$.

\medskip

3. If $u$ is rooted at $\boldsymbol{u_m}$, then $\boldsymbol{u_m} = \max(u)$.

\medskip 

4.  We have $|u_j-u_{j+1}|\leq 1$ for all $j$. 

\medskip

We define $|u| = u_1+\dots+u_n$ and let
$$A_n=\{u: u=u_1\dots u_n \text{ is a rooted unimodal composition}\}.$$
 For example $u = 00012\boldsymbol{2}22110000$ is a rooted unimodal composition with root $\boldsymbol{u_6}=\boldsymbol{2}$ 
and $|u| = 11$. The rooted unimodal compositions are useful to show that 
$\RS_n(112)$ is a sum of Gaussian polynomials.
\bpr
We have 
$$\RS_n(112) = \sum_{u\in A_n} q^{|u|}.$$
\epr
\begin{proof} It suffices to construct a bijection $\psi: R_n(112)\rightarrow A_n$ such that $\rs(w) = |\psi(w)|$. Let 
$w=w_1\dots w_n$, and let $m$ be the index of the first maximum of $w$. We will construct $\psi(w)=u= u_1\ldots \boldsymbol{u_m} \dots u_n$ by letting $u_i = \rs(w_i)$. For example if $w= 1234553221$ then $\psi(w) = 0123\boldsymbol{3}32110$.

We begin by showing $\psi$ is well-defined. Let $w\in R_n(112)$ and $u=\psi(w)$. By Theorem \ref{rgf:len3}, $w$ has some initial run $12\dots m$ which is followed by a weakly decreasing sequence with terms at most $m$. We will show $u$ satisfies properties 1--4 above.

First, properties 1 and 3 follow from the fact that $w$ is unimodal with maximum $m$. Because $w_1=1$ and $w_n$ is the right-most element, we have $\rs(w_1)=\rs(w_n)=0$ and thus, property 2. 
The fourth property holds because before the maximum index $m$, adjacent elements increase by one, and after that index the sequence is weakly decreasing.

We now define $\psi^{-1}$. Let $u = u_1\ldots \boldsymbol{u_m} \dots u_n \in A_n$. Let $\ell(j)$ be the index of the last occurrence of $j$ in $u_1\ldots \boldsymbol{u_m}$. We construct $w= \psi^{-1}(u)$ so that 
$$w=123\dots m w_{m+1}\dots w_n$$
where for $m+1\leq i\leq n$ we have $w_i = \ell(u_i)$. For example if $u=001122\boldsymbol{2}21000$ then $w= 123456774222$. To show $\psi^{-1}$ is well-defined, it suffices to show $w_i\geq w_{i+1}$ for $i\ge m$.  But this follows since $u_1\ldots \boldsymbol{u_m}$ is a weakly increasing sequence and so $u_i\ge u_{i+1}$ for $i\ge m$ implies $\ell(u_i)\ge\ell(u_{i+1})$.

Next, we show that the two maps are indeed inverses. First, assume $\psi(w) = u$ and $\psi^{-1}(u) = v=v_1\dots v_n$. Let $m$ be the index of the first maximum in $w$ so that $\boldsymbol{u_m}$ is the root in $u$. We will show $w_i=v_i$ for all $1\leq i\leq n$. We know $w_i=i$ for all $i\leq m$. Since $\boldsymbol{u_m}$ is rooted in $u$ then, by definition of $\psi^{-1}$, we have that $v$ also begins with $12\dots m$.  For $i>m$, there must be an index $k\le m$ with $k=w_k=w_i$  If follows that $u_k=\rs(w_k)=\rs(w_i)=u_i$.  Furthermore, $k$ must be the largest index less than or equal to $m$ which satisfies the last equality since $w_{k+1}=w_k+1$ so that $u_{k+1}>u_i$.  It follows, by definition of $\ell$, that $v_i=\ell(u_i)=k=w_i$.
The proof that $\psi(\psi^{-1}(u))=u$ is similar. 

If $\psi(w)= u$ then $u_i = \rs(w_i)$ and so $\rs(w)=|u|$. Therefore 
$$\RS_n(112) = \sum_{u\in A_n} q^{|u|}$$
as desired.
\end{proof}

Let 
$$B_n = 
\bigcup_{m\ge1} \{(\lambda, \beta): \text{$\lambda\in\Par$, $\beta=(m-1)\times (n-m)$,  and $\lambda\subseteq\beta$}\}.$$
As discussed above,
$$\sum_{(\lambda,\beta)\in B_n} q^{|\lambda|} = \sum_{m\geq 1} \gauss{n-1}{m-1}_q = \sum_{t\geq 0} \gauss{n-1}{t}_q.$$
\bpr\label{connect_unim}
We have 
$$\sum_{u\in A_n} q^{|u|} = \sum_{t\geq 0} \gauss{n-1}{t}_q.$$
\epr
\begin{proof} From the discussion just before this proposition, it suffices to construct a bijection $\varphi: A_n\rightarrow B_n$ such that if $u\in A_n$ and $\varphi(u)=(\lambda, \beta)$ then $|u|=|\lambda|$. Let $u=u_1\ldots\boldsymbol{u_m}\dots u_n\in A_n$. Then we construct $\varphi(u) = (\lambda,\beta)$ as follows. First, we use the index of the root of $u$ to determine that $\beta$ will be a $(m-1)\times (n-m)$ rectangle. Consider the diagonal in $\beta$ formed by coordinates $(1,1), (2,2),\dots$ and parallel diagonals above and below this one. Then, going from southwest to northeast, take the first $u_i$ squares along each diagonal as $i$ varies from $2$ to $n-1$ to form the diagram for $\lambda$.
 For example the rooted unimodal composition $u=001233\boldsymbol{3}32210$ will give $\lambda = (5,5,4,3,3)$ in the $6\times 5$ rectangle $\beta$ shown in Figure~\ref{ydr}.

Properties 1, 2, and 4 ensure that $\lambda = (\lambda_1,\dots, \lambda_k)$ is a well-defined Young diagram corresponding to an integer partition. Now we must check that $\lambda\subseteq\beta$. Using the ordering in which we constructed the diagonals of $\lambda$, observe that the number of nonempty diagonals up to and including the main diagonal is $k$, the number of parts of $\la$.   As the main diagonal  of $\lambda$ corresponds to element $\boldsymbol{u_m}$ and we begin in our construction with element $u_2$, we have $k\leq m-1$. Similarly, we can see that $\lambda_1$ is equal to the number of nonempty diagonals after and including the main diagonal. Thus $\lambda_1 \leq n-m$, as we end with element $u_{n-1}$. Therefore $\lambda\subseteq\beta$.

To define the inverse map note that if $(\lambda,\beta)\in B_n$, where $\beta$ is an $(m-1)\times (n-m)$ rectangle, then the root of $\varphi^{-1}(\lambda,\beta)$ should  be at index $m$. The entries of $\varphi^{-1}(\lambda,\beta)$ are obtained using the diagonals of $\lambda$ so as to reverse the above construction. As the construction of $\varphi^{-1}$ is very similar to that of $\varphi$, we leave the process of checking that $\varphi^{-1}$ is well defined and the inverse of $\varphi$ to the reader. In addition, it is clear from the definitions that $|u|=|\lambda|$. 
\end{proof}

Combining the two previous propositions shows that $\RS_n(112)$ has the desired form.
It should be mentioned that we originally proved Proposition \ref{connect_unim}  using a bijection involving hook decompositions, similar to Section 3 of the paper of Barnabei et al.~\cite{bbes:dsi}. Although the above proof was found to be simpler,  it may be interesting to further explore connections between patterns in RGFs and hook decompositions.
\bpr
\label{LB112}
We have
$$\LB_n(112)=\sum_{t\geq 0} \gauss{n-1}{t}_q .$$
\epr
\begin{proof}We will construct a bijection $\rho:R_n(112)\rightarrow B_n$ such that $\lb(w)= |\lambda|$, where 
$w\in R_n(112)$ and $\rho(w)=(\lambda,\beta)$. Let the initial run of $w$ be $w_1\dots w_m=1 2\dots m$ so that $m=\max(w)$.

First, we let $\beta$ be an $(n-m)\times (m-1)$ rectangle. We then let 
$$\lambda = (m-w_n,m-w_{n-1},\dots,m-w_{m+1}), $$ 
permitting parts equal to zero. For example, $w=123456633211$ would map to $(\lambda,\beta)$ shown in Figure~\ref{ydr}.

As $m$ is the maximum of $w$, we have $0\leq m-w_i\leq m-1$ for $m+1\leq i\leq n$. In addition, $w_{m+1}\geq w_{m+2}\geq \dots \geq w_n$ and thus the parts of $\lambda$ are weakly decreasing. Therefore $\lambda$ is well-defined and fits inside $\beta$. Constructing $\rho^{-1}$ is a simple matter which we leave to the reader.

Now notice that in $w$, $\lb(w_i) = 0$ for all $1\leq i\leq m$. For $m<i\leq n$, we have that $\lb(w_i) = m-w_i = \lambda_{n-i+1}$. Thus 
$$
\lb(w)= \lb(w_{m+1})+\lb(w_{m+2})+\dots+\lb(w_{n}) = \lambda_{n-m} +\lambda_{n-m-1}+\dots+\lambda_1 = |\lambda|
$$
as desired.
\end{proof}
\bpr\label{connectLB122}
We have
$$\LB_n(112) = \LB_n(122).$$
\epr
\begin{proof}We will construct a bijection $\eta:R_n(112)\rightarrow R_n(122)$ such that $\lb(w) = \lb(\eta(w))$. Let 
$w\in R_n(112)$ have maximum $m$. To construct $\eta(w)$ we start with the sequence $12\dots m$. For every $w_i$, where $w_i$ is not in the initial run of $w$, we will insert a 1 just to the right of element $m-w_i+1$ in $\eta(w)$. Note that $1\leq w_i\leq m$ ensures that this element always exists, and in conjunction with Theorem~\ref{rgf:len3} this shows that $\eta$ is well-defined. For example if $w=12345664331$ then $\eta(w) = 11231411561$.
Clearly $\eta$ is invertible.

To check that $\lb$ is preserved, note that in $w$ the initial run does not contribute to $\lb$ and in $\eta(w)$, none of the terms greater than 1 contribute to $\lb$. Consider $w_i$ such that $i> m$. Then $\lb(w_i) = m-w_i$. If we examine the 1 placed into $\eta(w)$ because of $w_i$, we notice that it has $m-w_i$ terms greater than 1 to its left. Therefore the $\lb$ of this 1 is $m-w_i$. Thus, $\lb(w)=\lb(\eta(w))$.
\end{proof}
Combining the above propositions yields Theorem \ref{connectLB112}.

%subsection 2
\subsection{Patterns related to integer partitions with distinct parts}
Next, we will explore a connection to integer partitions with distinct parts. It is well-known that the generating function for partitions with distinct parts of size at most $n-1$ is 
$$\prod_{i=1}^{n-1} (1+q^i).$$
As noted in the introduction, for the pattern $121$ we have $R_n(121) = w(\Pi_n(13/2))$.  So we can use the following result of Goyt and Sagan who studied the $\ls$ statistic on $\Pi_n(13/2)$.
\bpr[\cite{gs:sps}] We have 
$$\LS_n(121) =\RB_n(121)= \prod_{i=1}^{n-1} (1+q^i).$$
\epr
The following result establishes that, once again, four of our generating functions are the same.
\bth\label{connectLS112RB122} We have the equalities
$$\LS_n(112) = \LS_n(121)  =\RB_n(121)=\RB_n(122) = \prod_{i=1}^{n-1} (1+q^i).$$
\eth

As before, we break the proof of this result into pieces.

\bpr\label{connectLS112} We have 
$$\LS_n(112) = \LS_n(121).$$
\epr
\begin{proof} %In the case of $13/2$ and $w(13/2) = 121$, we $R_n(121) = w(\Pi_n(\pi))$ and thus $\LS_n(13/2)=\LS_n(121)$.  From \cite{ddggprs:spp} we know that 
%$$\LS_n(13/2) = \prod_{i=1}^{n-1} (1+q^i).$$
We will construct a bijection $\xi: R_n(112)\rightarrow R_n(121)$ such that $\ls(w) = \ls(\xi(w))$. Given $w\in R_n(112)$ we will construct $\xi(w)$ by rearranging the elements of $w$ in weakly increasing order.  By Theorem~\ref{rgf:len3}, this is well defined.   For the inverse, if we are given a layered RGF, $v$, then we use the first element of each layer to form an initial run and rearrange the remaining elements in weakly decreasing order.

For any RGF $w=w_1 \dots w_n$ we have $\ls(w_i) = w_i-1$.  Since $w$ and $\xi(w)$ are rearrangements of each other, $\ls$ is preserved.
\end{proof}
\bpr\label{connectRB122} We have 
$$\LS_n(112) = \RB_n(122).$$
\epr
\begin{proof}Let $\eta:R_n(112)\rightarrow R_n(122)$ be as in Proposition \ref{connectLB122}.
To see that $\ls(w) = \rb(\eta(w))$, first note that $\ls(w_i) = w_i -1$. By construction, the initial run of $w$ has $\ls$ that is equal to the total $\rb$ of the first occurrences of elements in $\eta(w)$. In addition, for each $w_i$ not in the initial run of $w$, we place a 1 to the right of $m-w_i+1$ in $\eta(w)$, and therefore there are $w_i-1$ elements to its right that are larger than it. Thus $\ls(w) = \rb(\eta(w))$.
\end{proof}

Combining the above propositions, we obtain Theorem \ref{connectLS112RB122}.

\subsection{Patterns not related to integer partitions}

In this section, we present two more connections between the generating functions of patterns of length 3.  The first is as follows.

\bth
\label{RS122LBRS123}
We have 
$$\RS_n(122) = \LB_n(123) = \RS_n(123) = 1+ \sum_{k=0}^{n-2}\binom{n-1}{k+1}q^k.$$
\eth
\begin{proof} 
It was shown in~\cite{ddggprs:spp} that
$$
\LB_n(123) = \RS_n(123) = 1+ \sum_{k=0}^{n-2}\binom{n-1}{k+1}q^k.
$$
So it suffices to construct a bijection $f:R_n(122)\rightarrow R_n(123)$ which preserves the $\rs$ statistic.
First, recall that by Theorem \ref{rgf:len3}, words in $R_n(123)$ contain only $1$s and $2$s and that for $w\in R_n(122)$, every element $j\geq 2$ of $w$ appears only once. Given $w=w_1\dots w_n\in R_n(122)$, we will construct 
$f(w)=u_1\dots u_n$ by replacing each element $j\geq 2$ in $w$ with a 2. This is a bijection, as any word in $R_n(122)$ is uniquely determined by the placement of its ones. In addition, $\rs(w_i)=\rs(u_i)$ by construction so that $\rs(w)=\rs(f(w))$.
\end{proof}

The second establishes yet another connection between statistics on $R_n(112)$ and $R_n(122)$.
\bth\label{connectLS122} We have 
$$\RB_n(112)=\LS_n(122)= \sum_{m=0}^n\binom{n-1}{n-m}q^{\binom{m}{2}}.$$
\eth

\begin{proof} For the first equality, let $\eta:R_n(112)\rightarrow R_n(122)$ be as in Propositions \ref{connectLB122} and \ref{connectRB122}. We will show that for $w\in R_n(112)$, we have $\rb(w) = \ls(\eta(w))$. Because $w$ is unimodal, only the initial run contributes to $\rb$. If $m$ is the largest element in the inital run of $w$, then $\rb(w) = 1+ 2+\dots +(m-1)= \binom{m}{2}$. Similarly, only the elements greater than 1 in $\eta(w)$ contribute to $\ls$. By construction, the largest element in $\eta(w)$ is $m$ as well. Thus, $\ls(\eta(w)) = 1+2+\dots +(m-1) = \binom{m}{2}$. 

To show that $\RB_n(112)= \sum_m \binom{n-1}{n-m}q^{\binom{m}{2}}$ it suffices, as can be seen from the previous paragraph, to count the number of $w\in R_n(112)$ with  initial run $12\dots m$.   Notice that once the elements in the weakly decreasing sequence following the initial run have been selected, there is only one way to order them.  For that sequence we must choose $n-m$ elements from the set $[m]$, allowing repetition, yielding a total of $\binom{n-1}{n-m}$ as desired.
\end{proof}
It is remarkable that the map $\eta$ connects so many of the statistics on $R_n(112)$ and $R_n(122)$; see the the proofs of Propositions~\ref{connectLB122}, \ref{connectRB122}, and Theorem~\ref{connectLS122}.  The four-variable generating functions $F_n(v;q,r,s,t)$ can be used to succinctly summarize these demonstrations as follows.
\bth
We have

\medskip

\eqed{F_n(112; q, r, s, 1) = F_n(122; q, s, r, 1).}
\eth

%%%%%%%%%%%%%%%%%%%%%%%%%%%%%%%%%%%%%

\section{Multiple patterns of length 3}
\label{mpl}

This section considers RGFs which avoid multiple patterns  of length three. In all cases we are able to determine the four-variate generation function. We find connection to Gaussian polynomials, integer partitions, and Fibonacci numbers.

For sets $V\subseteq R_3$ it is not hard to see if $121\in V$ or $\{111,122\}\subseteq V$ then  the two notions of pattern avoidance, as RGFs and as set partitions, are equivalent. In these cases the characterization and cardinality of $R_n(V)$  have been determined by Goyt~\cite{goy:apt}, and the generating functions $F_n(V)$ have been determined by~\cite{ddggprs:spp}.  For completeness we will include the characterization, cardinality, and generating function for all $V\subseteq R_3$ except those which contain both $111$ and $123$ since in these cases $R_n(V)=\emp$  for $n\geq 5$.

The following characterizations are obtained directly by taking the intersection of the sets described in Theorem~\ref{rgf:len3} so the proof is omitted.

\bth
\label{multichar}
We  have the following characterizations.
\ben
\item $R_n(111,112)=\{ w\in R_n\ :\   \text{$w$ has initial run $12\dots m$ and $m\geq a_{m+1}>a_{m+2}>\dots> a_n$}\}$.
\item $R_n(111,121)=\{ w\in R_n\ :\    \text{$w$ is layered and every element of $w$ appears at most twice}      \}$.
\item $R_n(111,122)=\{ w\in R_n\ :\  \text{$w=12\dots n$  or  $w=12\dots i1(i+1)\dots (n-1)$ for some $0< i<n$}           \}$.
%\item $R_n(111,123)=\emptyset$ for all $n\geq 5$.
\item $R_n(112,121)=\{ w\in R_n\ :\     \text{$w=12\dots mm\dots m$ for some $1\le m\le n$}   \}$.
\item $R_n(112,122)=\{ w\in R_n\ :\   \text{$w=12\dots m11\dots 1$ for some $1\le m\le n$}        \}$.
\item $R_n(112,123)=\{ w\in R_n\ :\     w=12^{i}1^{n-i-1}   \text{ for some $0\leq i <n$}     \}$.
\item $R_n(121,122)=\{ w\in R_n\ :\    \text{$w=1^{i}23\dots (n-i+1)$ for some $0< i\le n$}         \}$.
\item $R_n(121,123)=\{ w\in R_n\ :\          \text{$w=1^i2^{n-i}$ for some $0< i\le n$}   \}$.
\item $R_n(122,123)=\{ w\in R_n\ :\      w=1^n \text{ or } w=1^{i}21^{n-i-1}\text{ for some $0< i <n$}       \}$.
\een
\label{thm:doublechr}
\eth

Let $f_n$ denote the $n$th Fibonacci number defined by $f_0=f_1=1$ and $f_n=f_{n-1}+f_{n-2}$ for $n\ge2$.

\begin{cor} The cardinality of the avoidance sets above are as follows. 
\begin{enumerate}
\item $\# R_n(111,112)=\#R_n(111,121)=f_n$. 
%\item $\{ \# R_n(111,123) \}_{n\geq 0}=\{1,1,2,2,1,0,0,\dots\}$
\item For all other pairs  of length 3 patterns $\{v_1,v_2\}$ except for the pair $\{111,123\}$ we have $\#R_n(v_1,v_2)=n$. 
\end{enumerate}
\end{cor}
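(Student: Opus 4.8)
The plan is to read both parts directly off the explicit characterizations in Theorem~\ref{multichar}, each of which presents $R_n(v_1,v_2)$ as a concrete family of RGFs, and then count.

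For part~2, each of the seven admissible pairs (all pairs of distinct length-$3$ patterns except $\{111,123\}$ and the two Fibonacci pairs) is described in Theorem~\ref{multichar} as a family of words indexed by a single integer parameter, and in every case the parameter ranges over exactly $n$ values. For instance $R_n(112,121)=\{12\dots m\,m\dots m:1\le m\le n\}$ contributes one word for each $m\in\{1,\dots,n\}$, and $R_n(121,123)=\{1^i2^{n-i}:0<i\le n\}$ likewise has $n$ members. The two families whose description carries a distinguished exception, namely $R_n(111,122)$ and $R_n(122,123)$, contribute one special word ($12\dots n$ and $1^n$ respectively) together with $n-1$ parametrized words, again totalling $n$. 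The remaining pairs are counted in the same routine way, giving $\#R_n(v_1,v_2)=n$ throughout.

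For part~1 I would handle the two Fibonacci classes separately. By Theorem~\ref{multichar}, $R_n(111,121)$ consists of the layered words $1^{n_1}2^{n_2}\cdots m^{n_m}$ in which every value appears at most twice, that is, each $n_i\in\{1,2\}$. Sending such a word to the composition $(n_1,\dots,n_m)$ gives a bijection onto the compositions of $n$ into parts of size $1$ and $2$; conditioning on the final part yields $\#R_n(111,121)=\#R_{n-1}(111,121)+\#R_{n-2}(111,121)$, which with the base cases $n=1,2$ gives $f_n$. For $R_n(111,112)$, Theorem~\ref{multichar} says $w$ is an initial run $12\dots m$ followed by a strictly decreasing tail of length $n-m$ with entries in $[m]$. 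Such a tail is determined by its underlying $(n-m)$-element subset of $[m]$, so for fixed $m$ there are $\binom{m}{n-m}$ words, and summing over $m$ gives $\#R_n(111,112)=\sum_{m=1}^{n}\binom{m}{n-m}=\sum_{k\ge0}\binom{n-k}{k}$, the classical diagonal sum of Pascal's triangle, which equals $f_n$ by Pascal's rule (or by the same $1$-or-$2$ recurrence).

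The only content beyond bookkeeping is recognizing the Fibonacci numbers, and even that is standard: $R_n(111,121)$ maps transparently onto $1$-or-$2$ compositions, while $R_n(111,112)$ reduces to the diagonal binomial identity $\sum_{k}\binom{n-k}{k}=f_n$. I expect the only point genuinely needing care is matching the indexing conventions (here $f_0=f_1=1$), so that the small cases $n=1,2$ line up correctly with the empty and singleton tails.
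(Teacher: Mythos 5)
Your proof is correct, but for the one case the paper actually argues in detail it takes a genuinely different route. The paper disposes of part 2 and of $R_n(111,121)$ by citing Goyt or reading the count off Theorem~\ref{multichar} (as you do), and then proves $\#R_n(111,112)=f_n$ \emph{recursively}: it observes from the characterization that such a word has either one or two $1$'s, giving the decomposition $w=1(v+1)$ with $v\in R_{n-1}(111,112)$ or $w=1(v+1)1$ with $v\in R_{n-2}(111,112)$, hence the Fibonacci recurrence directly. You instead count in closed form, fixing the initial run $12\dots m$ and noting the strictly decreasing tail is a choice of an $(n-m)$-subset of $[m]$, so $\#R_n(111,112)=\sum_m\binom{m}{n-m}$, which is the diagonal Pascal sum $\sum_k\binom{n-k}{k}=f_n$. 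Both arguments are sound (I checked your binomial sum against small cases and the indexing $f_0=f_1=1$). The paper's decomposition has the advantage of previewing the $1(v+1)$-type recursions used later (e.g.\ Theorem~\ref{LSRSmachine}), while your count refines the answer by the length $m$ of the initial run, which lines up with the $m$-indexed summand in the generating function $F_n(111,112)$ of Theorem~\ref{multFn}. Your treatment of $R_n(111,121)$ via compositions into parts $1$ and $2$ is also a slightly more explicit bijective substitute for the paper's citation.
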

\begin{proof}
All the cardinalities have been determined previously by Goyt~\cite{goy:apt}  or can be determined easily from Theorem~\ref{thm:doublechr} except $\# R_n(111,112)$. 

We will now show that $\# R_n(111,112)$ satisfies the Fibonacci recurrence. It is not hard to see that $\#R_0(111,112)=\#R_1(111,112)=1$. Next consider $w \in R_n(111,112)$ with $n\geq 2$.  We know from Theorem~\ref{thm:doublechr} that $w=12\dots ma_{m+1}\dots a_n$ where $m\geq a_{m+1}>\dots > a_n$ which implies that $w$ has either one $1$ or two $1$'s.  If $w$ has one $1$ then that $1$ is at the beginning and $w=1(v+1)$ for some $v\in R_{n-1}(111,112)$. If $w$ has two $1$'s then  the second $1$ will be $a_n$ and $w=1(v+1)1$ for some $v\in R_{n-2}(111,112)$. This gives us the desired recurrence $\#R_n(111,112)=\#R_{n-1}(111,112)+\#R_{n-2}(111,112)$.
\end{proof}

All the sets described in Theorem~\ref{thm:doublechr} are sufficiently simple that we can determine   their four-variable generating functions. Many of the functions can be simplified by extending the Gaussian polynomials which were defined at the beginning of Section~\ref{spl} to two variables. The bivariate analogue of  $n$ is
$$[n]_{p,q}=p^{n-1}+p^{n-2}q+\dots +q^{n-1}$$
so $[n]_{p,q}!=[n]_{p,q}[n-1]_{p,q}\dots [1]_{p,q}$ and the binomial analogue is
$${n\brack k}_{p,q}=\frac{[n]_{p,q}!}{[k]_{p,q}![n-k]_{p,q}!}.$$
We can recover the one-variable version by letting $p=1$. 

In Section~\ref{spl} we introduced a classic  interpretation for the $q$-binomial coefficients in terms of integer partitions. There is a similar well-known interpretation  in the bivariate case.  Before we give the interpretation we need one definition. 
Given $\beta$ a $r \times \ell$ box and a partition $\lambda=(\lambda_1,\dots \lambda_k) \subseteq \beta$ we define its {\it complement} $\lambda^c$  to be the partition which is composed of all boxes of $\lambda$'s Young diagram in $\beta$ outside $\lambda$ rotated $180^{\circ}$. 
 Continuing our example from Figure~\ref{ydr}, the partition $\lambda=(5,5,4,3,3)$ in the $6\times 5$ box  has $\lambda^c=(5,2,2,1)$ as its compliment. 
Note that  $|\lambda^c|=r\ell-|\lambda|$. For $\beta$, a $r\times \ell$ box, we have the well-known formula
$${r+\ell \brack \ell}_{p,q}=\sum_{\lambda \subseteq \beta}p^{|\lambda|}q^{|\lambda^c|}.$$

Almost all the functions $F_n(V)$ for $V=\{v_1,v_2\}\subset R_3$ are computed in~\cite{ddggprs:spp}, \cite{gs:sps}, or follow easily from methods previously used in this paper, so the proofs will be omitted.  The only exception is $V=\{111,112\}$ for which we will provide a demonstration. The method we use parallels the proof used for $F_{n}(111,121)$ in~\cite{gs:sps}.
\bth
We  have the following generating functions.
\label{multFn}
\ben
\item $\displaystyle F_n(111,112)= \sum_{m\geq 0}   (qrt^2)^{\binom{m}{2}}  (rs)^{\binom{n-m}{2}}  {n-m \brack m}_{r,qt}$.

\item $\displaystyle F_n(111,121)=\sum_{m\geq 0}(rs)^{\binom{m}{2}+\binom{n-m}{2}}{n-m \brack m }_{r,s}$.

\item $\displaystyle F_n(111,122)=(rs)^{\binom{n}{2}}+(rs)^{\binom{n-1}{2}}[n-1]_{s,qt}$.

\item $\displaystyle F_n(112,121)=\sum_{m=1}^nr^{(m-1)(n-m)}(rs)^{\binom{m}{2}}$.

\item $\displaystyle F_n(112,122)=(rs)^{\binom{n}{2}}+\sum_{m=1}^{n-1}q^{(m-1)(n-m)}(rs)^{\binom{m}{2}}t^{m-1}$.

\item $\displaystyle F_n(112,123)=1+r^{n-1}s +qrst[n-2]_{q,rt}$.

\item $\displaystyle F_n(121,122)=\sum_{m=1}^{n}(rs)^{\binom{m}{2}}s^{(m-1)(n-m)}$.

\item $\displaystyle F_n(121,123)=1+rs[n-1]_{r,s}$.

\item $\displaystyle F_n(122,123)=1+rs^{n-1}+qrst[n-2]_{q,s}$. 

\een
\eth
\bprf
Let $\rho:R_n(112)\ra B_n$ be the map of Proposition~\ref{LB112}.  We will continue to use the other notation in the proof of that result.  If we restrict $\rho$ to $R_n(111,112)$ then, by Theorem~\ref{multichar}, the image of the restricted map is
$$
C_n = 
\bigcup_{m\ge1} \{(\lambda, \beta): \text{$\lambda\in\Par$ has distinct parts, $\beta=(n-m)\times (m-1)$,  and $\lambda\subseteq\beta$}\}.
$$
If $w\mapsto (\lambda,\beta)$ then we claim
$$
(\lb(w),\ls(w),\rb(w),\rs(w))= \left(|\la|,\ \binom{m}{2}+|\la^c|,\  \binom{m}{2},\ \binom{n-m}{2}+|\la| \right).
$$
Indeed, $\lb(w)=|\la|$ was proved in Proposition~\ref{LB112}.  For $\ls(w)$, the binomial coefficient comes from the initial run, while for $i>m$ we have
$$
\ls(w_i)=w_i-1=(m-1)-(m-w_i)=\la_{i-m}^c.
$$
Only the initial run contributes to $\rb(w)$, giving $\binom{m}{2}$.  Call the subword $w_{m+1} w_{m+1}\dots w_n$ of $w$ its {\em tail}.  Since the tail is strictly decreasing, it will contribute $\binom{n-m}{2}$ to $\rs(w)$.  If $w_i$ is in the initial run, then $\rs(w_i)$ is the number of elements in the tail smaller than $w_i$.  But this is the same as the number of boxes in column $m-i+1$ of $\la$ and so the initial run adds another $|\la|$ to $\rb(w)$.

There is a standard bijection $\de$ from partitions with $r$ distinct parts $\la=(\la_1,\dots,\la_r)$ to ordinary partitions 
$\mu=(\mu_1,\dots,\mu_r)$ with $r$ parts where in both case we permit zero as a part. It is given by 
$$
\de(\la_1,\la_2,\dots,\la_{r-1},\la_r)=(\la_1-(r-1),\la_2-(r-2),\dots,\la_{r-1}-1,\la_r)=\mu.
$$
Note the $|\la|=|\mu|+\binom{r}{2}$.  Also,
if $\la\sbe r\times \ell$ then $\mu\sbe r\times (\ell-r+1)$. 
 Furthermore $\mu^c=\de(\la^c)$.

Now if  $\la\sbe (n-m)\times(m-1)$ then $\de(\la)\sbe (n-m)\times(2m-n)$.  It follows that we have a bijection $\rho':R_n(111,112)\ra C_n'$ where
$$
C_n'= 
\bigcup_{m\ge1} \{(\mu, \ga): \text{$\mu\in\Par$, $\ga=(n-m)\times (2m-n)$,  and $\mu\subseteq\ga$}\}.
$$
Furthermore, if $\rho'(w)=(\mu,\ga)$ then
$$
\barr{l}
(\lb(w),\ls(w),\rb(w),\rs(w))\\[10pt]
\hspace{30pt}\dil = \left(\binom{n-m}{2}+|\mu|,\ \binom{m}{2}+\binom{n-m}{2}+|\mu^c|,\  \binom{m}{2},\  
2\binom{n-m}{2}+|\mu|\right).
\earr
$$
Translating this bijection into a generating function identity and then replacing $m$ by $n-m$ yields the desired equation.
\eprf

From the functions provided in Theorem~\ref{multFn} we can see several symmetries and invariants. 

\begin{cor}
\label{sym}
 Let $V\subseteq R_3$ and $n\geq 0$.
\begin{enumerate}
\item For $V=\{v_1,v_2\}$ such that $121\in V$ or $V=\{111,122\}$, the function $F_n(V)$ is invariant under switching $q$ and $t$.
\item The following sets $V$ have $F_n(V)$ invariant under switching  $r$ and $s$. 
$$\{111,121\}, \{112,122\},\{121,123\}.$$
\item We have the  equalities
$$F_n(111,121;q,r,s,t)=F_n(111,112;s,r,s,1),$$
$$F_n(112,121;q,r,s,t)=F_n(121,122;q,s,r,t),$$
and
$$F_n(112,123;q,r,s,1)=F_n(122,123;q,s,r,1).$$
\end{enumerate}
\end{cor}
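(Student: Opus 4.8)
The plan is to read every assertion directly off the closed forms recorded in Theorem~\ref{multFn}, since each claim is either a symmetry of one explicit polynomial or an identity relating two of them under a substitution of variables. Before scanning the formulas I would isolate one structural fact about the bivariate $q$-analogues: from the definition $[n]_{p,q}=p^{n-1}+p^{n-2}q+\dots+q^{n-1}$ one sees at once that $[n]_{p,q}=[n]_{q,p}$, so $[n]_{p,q}!$ and hence every Gaussian polynomial ${n\brack k}_{p,q}$ is invariant under interchanging $p$ and $q$. I would record this once and appeal to it whenever a bracketed coefficient appears.

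For part (1) the verification is by inspection. For each $V$ with $121\in V$, namely $\{111,121\}$, $\{112,121\}$, $\{121,122\}$, and $\{121,123\}$, the corresponding formula in Theorem~\ref{multFn} involves neither $q$ nor $t$, so invariance under exchanging them is automatic. For $V=\{111,122\}$ the only occurrence of $q$ and $t$ is through the factor $[n-1]_{s,qt}$, where they enter solely as the product $qt=tq$; swapping them therefore fixes the function. Part (2) is dispatched the same way: in the formula for $\{112,122\}$ the variables $r$ and $s$ appear only as the symmetric monomial $rs$, while for $\{111,121\}$ and $\{121,123\}$ they appear through $rs$ together with a Gaussian polynomial (${n-m\brack m}_{r,s}$ or $[n-1]_{r,s}$) that is symmetric in $r,s$ by the preliminary observation.

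For part (3) I would perform each of the three substitutions and match the results termwise. Applying $(q,r,s,t)\mapsto(s,r,s,1)$ to $F_n(111,112)$ sends $(qrt^2)^{\binom{m}{2}}$ to $(rs)^{\binom{m}{2}}$ and the second parameter of ${n-m\brack m}_{r,qt}$ to $s$, recovering $F_n(111,121)$ summand by summand. Swapping $r$ and $s$ in $F_n(121,122)$ fixes $(rs)^{\binom{m}{2}}$ and turns $s^{(m-1)(n-m)}$ into $r^{(m-1)(n-m)}$, giving precisely $F_n(112,121)$. Finally, setting $t=1$ in $F_n(112,123)$ produces $1+r^{n-1}s+qrs[n-2]_{q,r}$, and applying $(q,r,s,t)\mapsto(q,s,r,1)$ to $F_n(122,123)$ yields the same expression once $[n-2]_{q,s}$ is carried to $[n-2]_{q,r}$.

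There is no genuine difficulty here beyond careful bookkeeping; the whole corollary is a consequence of having the explicit polynomials in hand. The one point that demands attention is the bivariate case: I must establish the $p\leftrightarrow q$ symmetry of ${n\brack k}_{p,q}$ before invoking it, and I must track exactly which argument of each Gaussian polynomial a substitution touches—for instance that under $(q,t)\mapsto(s,1)$ the second parameter of ${n-m\brack m}_{r,qt}$ becomes $s$ rather than remaining $qt$. With that care, all the stated symmetries and equalities fall out by direct comparison of terms.
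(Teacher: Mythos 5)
Your proposal is correct and follows exactly the route the paper intends: the corollary is stated as a direct consequence of the explicit formulas in Theorem~\ref{multFn}, and your term-by-term inspection (together with the observation that $[n]_{p,q}$, and hence each ${n\brack k}_{p,q}$, is symmetric in $p$ and $q$) is precisely the verification the paper leaves implicit. All substitutions check out, including the slightly delicate ones where ${n-m\brack m}_{r,qt}$ becomes ${n-m\brack m}_{r,s}$ under $(q,t)\mapsto(s,1)$.
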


The avoidance classes for $V\subseteq R_3$ of size three and four are easy to determine and so we will merely list them in  Table~\ref{tab:multi}.  The reader interested in the corresponding generating functions will be able to easily write them down.

\begin{table}
\def\arraystretch{1.5}
\begin{center}
\begin{tabular}{|c|c|}
\hline
$V$  & $R_n(V)$\\
\hline
$\{111,112,121\}$&$12\dots n,\ 12\dots (n-2)(n-1)^2$\\
\hline
$\{111,112,122\}$& $12\dots n,\ 12\dots (n-1)1$\\
\hline
$\{111,121,122\}$&$12\dots n,\ 1123\dots (n-1)$\\
\hline
$\{112,121,122\}$&$12\dots n,\ 1^n$\\
\hline
$\{112,121,123\}$&$1^n,\ 12^{n-1}$\\
\hline
$\{112,122,123\}$&$1^n,\ 121^{n-2}$\\
\hline
$\{121,122,123\}$&$1^n,\ 1^{n-1}2$\\
\hline
\hline
$\{111,112,121,122\}$&$12\dots n$\\
\hline
$\{112,121,122,123\}$&$1^n$\\
\hline
\end{tabular}
\end{center}
\caption{Avoidance classes for $V\subset R_3$ of size three and four and $n\ge3$. }
\label{tab:multi}
\end{table}

%%%%%%%%%%%%%%%%%%%%%%%%%%%%%%%%%%%%%

\section{Recursive Formulae and Longer Words}
\label{rfl}

In this section we will investigate generating functions for avoidance classes of various RGFs of length greater than three.  This includes a recursive formula for computing the generating funtions for longer words in terms of shorter ones.

Let $w+k$ denotes the word obtained by adding the nonnegative integer $k$ to every element of $w$. Note that if $w$ is an RGF and $k$ is nonzero, then $w+k$ will not be an RGF. However, the word $\bar{w} = 12\dots k(w+k)$ obtained by concatenating the increasing sequence $12\dots k$ with $w+k$, will be an RGF.   In fact, there is a relationship between the generating functions for $w$ and $\bar{w}$. In the following theorem, we show that this relationship holds for the $\ls$ and $\rs$ statistic.
We note that in \cite[Propositions 2.1 and 2.2]{ms:paspcn}, 
Mansour and Shattuck use the same method to find the cardinalities of the avoidance classes of the pairs of patterns $\{1222, 12323\}$ and $\{1222,12332\}$.

\bth \label{LSRSmachine}
Let $v$ be an RGF and $\bar{v} = 1(v+1).$ Then
$$ \LS_n(\bar{v}) = \sum_{j =0}^{n-1} \binom{n-1}{j}q^j\LS_j(v) $$
and
$$\RS_n(\bar{v}) = \sum_{j=0}^{n-1} \sum_{k=0}^j \binom{n+k-j-2}{k} q^k \RS_j(v).$$
\eth

\bprf
We start by building the avoidance class of $\bar{v}$ out of the avoidance class of $v$. We do so by taking a word $w$ in the avoidance class of $v$, forming $1(w+1)$, and then adding a sufficient number of ones to $1(w+1)$ to obtain a word $\bar{w}$ of length $n$ which avoids $\bar{v}$. We then count how adding these ones affects the respective statistics. 

We first establish that avoidance is preserved in this process. Let $w \in R_j(v)$. Since $w$ avoids $v$, we know $1(w+1)$ avoids $1(v+1) = \bar{v}$. Now we need to show that forming  $\bar{w}$ by adding $n-j-1$ ones to  $1(w+1)$ in any manner will result in $\bar{w}$ avoiding $\bar{v}$. If $\bar{w} \not\in R_n(\bar{v})$, then there is a subword $w^{\prime}$ of $w$ such that $\st(w^{\prime}) = \bar{v}.$ Since $\bar{v} = 1(v+1)$, the smallest element of $w^{\prime}$ must appear only at the beginning of the subword, and must be a $1$ since $1(w+1)$ avoided $\bar{v}$. But removing the unique $1$ and standardizing the remaining elements shows that there is a subword of $w$ that standardizes to $v$. This is a contradiction. Therefore, we must have $\bar{w} \in R_n(\bar{v}).$ Similarly, every word in $R_n(\bar{v})$ with $n-j$ ones can be turned into a word in $R_j(v)$ by removing all ones and standardizing. If this word wasn't in $R_j(v)$, then it would contain a subword that standardized to $v$. As before, this would mean the original word contained $1(v+1) = \bar{v}$, which is a contradiction. Therefore, we can construct every word in $R_n(\bar{v})$ from the words in $R_j(v)$ for $j \in [0,n-1]$.

We now translate this process into the generating function identities.  First we will focus on the $\LS$ formula. We can choose any $w \in R_j(v)$, and place the elements of $w+1$ in our word $\bar{w}$ in $\binom{n-1}{j}$ different ways since we must leave the first position free to be a one. Then we fill in the rest of the positions with ones. Since we added $1$ to each element of $w \in R_j(v)$ and added a one to the beginning of the word, we have $\ls(\bar{w}) = \ls(w) + j$. So
$$\LS_n(\bar{v}) = \sum_{\bar{w} \in R_n(\bar{v})}q^{\ls(\bar{w})}  = \sum_{j =0}^{n-1}\sum_{w \in R_j(v)} \binom{n-1}{j}q^jq^{\ls(w)} = \sum_{j =0}^{n-1} \binom{n-1}{j}q^j\LS_j(v). $$

For the $\RS$ formula, instead of all $j$ elements of $w+1$ increasing the statistic, only the $k$ elements of $w+1$ that are to the left of the rightmost one in $\bar{w}$ will contribute. If we choose where to place these elements, then everything else is forced. We start with $n-1$ positions available, and disregard $j-k+1$ for the rightmost one and the elements of $w+1$ that appear after it. Thus we have $(n-1) - (j-k+1) = n+k-j-2$ positions to choose from. Summing over all values of $j$ and $k$ gives the $\RS$ formula.
\eprf

In the paper of Dokos et al.~\cite{ddjss:pps}, the authors introduced the notion of statistical Wilf equivalence.
We will consider how this idea can be applied to the four statistics we have been studying.
 We define two RGFs $v$ and $w$ to be {\em $\ls$-Wilf-equivalent} if $\LS_n(v) = \LS_n(w)$ for all $n$, and denote this by
$$
v \overset{\ls}{\equiv} w.
$$
Similarly define an equivalence relation for the other three statistics. 
Let $\st$ denote any of our four statistics.  Given any equivalence $v \overset{\st}{\equiv} w$, we can  generate an infinite number of related equivalences.
\bco
Suppose $v \overset{\st}{\equiv} w$.   Then for any $k\ge1$ we have
$$
12\dots k (v+k) \overset{\st}{\equiv} 12\dots k (w+k).
$$
\eco
\bprf
For $\st=\ls,\rs$ this follows immediately from Theorem~\ref{LSRSmachine} and induction on $k$.  For the other two statistics, note that the same ideas as in the proof of Theorem~\ref{LSRSmachine} can be used to show that one can write down the generating function for $\st$ over $R_n(12\dots k(v+k))$ in terms of the the generating functions for $\st$ over $R_j(v)$ for $j\le n$ although the expressions are more complicated.  Thus induction can also be used in these cases as well.
\eprf
Applying this corollary to the equivalences in Theorem~\ref{connectLB122},  Proposition~\ref{connectLS112}, and  Theorem~\ref{RS122LBRS123} yields the following result.
\bco
We have 
\begin{align*}
12\dots kk(k+1)  &\overset{\lb}{\equiv}  12\dots k(k+1)(k+1),\\
12\dots kk(k+1)  &\overset{\ls}{\equiv}  12\dots k(k+1)k,\\
12\dots k(k+1)(k+1)  &\overset{\rs}{\equiv}  12\dots k(k+1)(k+2),
\end{align*}
for all $k\ge1$.\hqed
\eco

We will now demonstrate how these ideas can be used to find the generating functions for a family of RGFs by finding $\LS_n(12\dots k)$ for a general $k$. We begin by finding the degree of $\LS_n(12\dots k)$ through a purely combinatorial approach before using Theorem~\ref{LSRSmachine} to give a formula for the generating function itself.

\bpr\label{Prop:DegLS}
For $n\ge k$, the generating function $\LS_n(12\dots k)$ is monic and
$$
\deg \LS_n(12\dots k)=\binom{k-2}{2} + (k-2)(n-k+2).
$$
\epr

\bprf
It is easy to see that $w\in R_n(12\dots k)$ if and only if $w_i<k$ for all $i$.  Also $\ls(w_i)=w_i-1$ for all $i$.  Thus there is a unique word maximizing $\ls$, namely 
$w = 12\dots (k-2)(k-1) \dots (k-1)$.  Thus $\LS_n(12\dots k)$ is monic with 
$\ls(w) = 0 + 1 + 2 + \dots + (k-2) + (n-k+1)(k-2) = \binom{k-1}{2} + (k-2)(n-k+1)$.
\eprf

To obtain a formula for $L_n(12\dots k)$ we will use the $q$-analogues introduced earlier, often suppressing the subscript $q$ for readability.  Consider the rational function of $q$
$$K_{m,n} = \frac{[m+1]^{n-1} - 1}{[m]}.$$ 
We will need the following facts about $K_{m,n}$.  Writing $[m+1]^{n-1}=(1 + q[m])^{n-1}$ and expanding by the binomial theorem gives
\begin{equation}
\label{Kmn}
K_{m,n}=\sum_{j =1}^{n-1} \binom{n-1}{j}q^j[m]^{j-1}.
\end{equation}
We also have
\begin{equation}
\label{Kmndiff}
\frac{1}{[m]}(K_{m+1,n} - K_{1,n})=\sum_{j =1}^{n-1} \binom{n-1}{j}q^jK_{m,j} 
\end{equation}
which can be obtained by substituting the definition of $K_{m,j}$ into the sum and then applying the previous equation.

Finally we define, for $k\ge3$,
$$c_k = 1 - \sum_{j=1}^{k-3} \frac{1}{[j]!}c_{k-j}.$$
Note that when $k=3$ the sum is empty and so $c_3=1$.  Note also that for fixed $k$, the number of terms in $LS_n(12\dots k)$ is a linear function of $n$ by Proposition~\ref{Prop:DegLS}.  However, in the formula for this generating function which we give next the number of summands only depends on $k$, making it an efficient way to compute this polynomial.

\bth\label{LS(12...k)}
For $k\ge 3$, we have 
$$
\LS_n(12\dots k) = 1 + \sum_{i=1}^{k-2}\frac{1}{[i-1]!}c_{k-i+1}K_{i,n}.
$$
\eth

\bprf

We proceed with a proof by induction. In~\cite{ddggprs:spp}, the authors show that $\LS_n(1/2/3) = [2]^{n-1}$ for the set partition $1/2/3$. Recall that a set partition avoids $1/2/3$ if and only if its corresponding RGF avoids $123$. Therefore $\LS_n(1/2/3) = \LS_n(123) = [2]^{n-1}$  for $n\ge1$. Rewriting this as $\LS_n(123) = 1 + K_{1,n}$ gives our base case for $k = 3$.

Suppose the equality held for $ k  \ge 3$. Then, using Theorem~\ref{LSRSmachine} as well as equations~(\ref{Kmn}) and~(\ref{Kmndiff}),
\begin{align*}
\LS_n(12\dots k+1) &= 1 + \sum_{j=1}^{n-1}\binom{n-1}{j}q^j\LS_j(12\dots k) \\
                                &= 1 + \sum_{j=1}^{n-1}\binom{n-1}{j}q^j\left(1 + \sum_{i=1}^{k-2}\frac{1}{[i-1]!}c_{k-i+1}K_{i,j} \right)\\
                                &= 1 + \sum_{j=1}^{n-1}\binom{n-1}{j}q^j +  \sum_{i=1}^{k-2}\frac{1}{[i-1]!}c_{k-i+1}\left(\sum_{j=1}^{n-1}\binom{n-1}{j}q^jK_{i,j} \right)\\
                                &= 1 + K_{1,n} +  \sum_{i=1}^{k-2}\frac{1}{[i]!}c_{k-i+1}(K_{i+1,n} - K_{1,n})\\
                                &= 1 + K_{1,n}\left(1 - \sum_{i=1}^{k-2}\frac{1}{[i]!}c_{k-i+1}\right) +  \sum_{i=1}^{k-2}\frac{1}{[i]!}c_{k-i+1}K_{i+1,n}\\
			&= 1 + c_{k+1}K_{1,n} +  \sum_{i=1}^{k-2}\frac{1}{[i]!}c_{k-i+1}K_{i+1,n}\\
                                &= 1 + \sum_{i=1}^{k-1}\frac{1}{[i-1]!}c_{k-i+2}K_{i,n}
\end{align*}
which completes the induction.
\eprf

Let $1^m$ denote the RGF consisting of $m$ copies of one.  The ideas in the proof of Theorem~\ref{LSRSmachine} can be used to give  recursive formulae for this pattern.  It would be interesting to find other patterns where this reasoning could be applied.
\bth
For $m \ge 0$, we have
$$LS_n(1^m) = \sum_{j=1}^{m-1} \binom{n-1}{j-1}q^{n-j}\LS_{n-j}(1^m)$$
and
$$\RS_n(1^m) = \RS_{n-1}(1^m)+\sum_{j=2}^{m-1} \sum_{k=0}^{n-j}\binom{j+k-2}{k}q^k\RS_{n-j}(1^m).$$
\eth
\bprf
Let $w$ avoid $1^m$.  Then $w$ can be uniquely obtained by taking a $w'$ avoiding $1^m$ and inserting $j$ ones in $w'+1$, where $1\le j\le m-1$ and a one must be inserted at the beginning of the word.  The formula for $\LS_n(1^m)$ now follows since the binomial coefficient counts the number of  choices for the non-initial ones, $\LS_{n-j}(1^m)$ is the contribution from $w'+1$, and $q^{n-j}$ is the obtained from the interaction between the initial one and $w'+1$.  The reader should now have no problem modifying the proof of the $\RS_n(\bar{v})$ formula in Theorem~\ref{LSRSmachine} to apply to this case.
\eprf

%%%%%%%%%%%%%%%%%%%%%%%%%%%%%%%%%%%%%

\section{The pattern $1212$}
\label{p1212}

\subsection{Noncrossing partitions}
The set partitions which avoid the pattern $13/24$ are called \emph{non-crossing} and are of interest, in part, because of their connection with Coxeter groups and the Catalan numbers
$$
C_n=\frac{1}{n+1}\binom{2n}{n}.
$$  See the memoir of Armstrong~\cite{arm:gnp} for more information.  In this case the set containment in Proposition~\ref{R-Pi} can be turned into an equality as we will show next.  Note that $w(13/24)=1212$.
\bpr
\label{noncross_partition_rgf}
We have 
$$
R_n(1212)=w(\Pi_n(13/24)).
$$
\epr
\bprf
As just noted, it suffices to show that if $\pi$ contains $13/24$, then $w(\pi)$ contains $1212$. By definition, if $\pi$ contains $13/24$, then $w(\pi)$ contains a subword $xyxy$ for some $x\neq y$. If $x<y$, then this will standardize to $1212$ as desired. If $x>y$ then, because $w(\pi)$ is a restricted growth function, there must be some occurrence of $y$ before the leftmost occurrence of $x$ in $w(\pi)$. Thus $w(\pi)$ also contains a subword $yxyx$ which is a copy of $1212$ in $w(\pi)$. 
\eprf

With this proposition in hand, we now focus on gaining information about these partitions by studying $R_n(1212)$. We begin by applying the $\rs$ statistic to $R_n(1212)$, and in doing so obtain a $q$-analogue of the standard Catalan recursion. 
We first need the following lemma regarding $1212$-avoiding restricted growth functions.
\ble
\label{noncross_avoid}
For an RGF $w$, the following are equivalent:
\begin{enumerate}
\item[(1)] The RGF $w$ avoids $1212$.
\item[(2)] There are no $xyxy$ subwords in $w$. 
\item[(3)] If $w_i=w_{i`}$ for some $i<i'$ then, for all $j'>i'$, either  $w_{j'}\leq w_{i'}$ or $w_{j'}>\max\{w_1,\dots,w_{i'}\}$ .
\end{enumerate}
\ele
\bprf
The equivalence of the first two statements follows from the proof of Proposition~\ref{noncross_partition_rgf}. It thus suffices to show that $(2)$ and $(3)$ are equivalent. First, let $w$ be an RGF with no $xyxy$ subword, and let $w_i=w_{i'}$ for some $i<i'$. Assume, towards contradiction, that there exists a $j'$ with $j'>i'$ and $w_{i'}< w_{j'}\leq\max\{w_1,\dots w_{i'}\}$. This implies that there exists a $j$ with $j<i'$ and $w_j=w_{j'}$. If $i<j<i'$, then $w_iw_jw_{i'}w_{j'}$ forms an $xyxy$ subword in $w$, a contradiction. If this is not the case, then since $w_j>w_i$ and $w$ is an RGF, there must exist another occurance of the letter $w_i$ preceding $w_j$. This letter, combined with $w_j$, $w_{i'}$, and $w_{j'}$ still creates an $xyxy$ subword, which is again a contradiction. This shows that $(2)$ implies $(3)$.

Now we show that if $w$ contains an $xyxy$ subword, then $w$ cannot satisfy $(3)$. Indeed, by the discussion in the proof of Proposition~\ref{noncross_partition_rgf}, if $w$ contains an $xyxy$ subword then, without loss of generality, we may assume $x<y$. Thus the second occurrence of $y$ in the subword will violate condition $(3)$. This completes the proof of the equivalence of the statements.
\eprf

We now move to a recursive way of producing words in $R_n(1212)$. 
\bco
\label{noncross_avoid_cor}
If $u$ is in $ R_{n-1}(1212)$ then both $1u$ and $1(u+1)$ are in $R_n(1212)$.
\eco
\bprf
Let $u$ be an element of $R_{n-1}(1212)$. By the previous lemma, we know that $u$ does not contain any $xyxy$ subwords. Prepending a $1$ to $u$ will not create any such subword, as otherwise this would imply an $xyxy$ subword in $u$ using its leading $1$. Therefore $1u$ is contained in $R_n(1212)$. Furthermore, adding one to each element in $u$ to create $u+1$ will not introduce an $xyxy$ subword, and prepending a $1$ to create $1(u+1)$ will not create an $xyxy$ subword as there is only one copy of $1$ in $1(u+1)$. Thus $1(u+1)$ is also contained in $R_n(1212)$. 
\eprf
With these results in hand, we move to one of the main results of this section. For two words $w$ and $u$, we will use the set notation $w\cap u=\emptyset$ to denote that $w$ and $u$ have no elements in common. The next theorem gives a $q$-analogue of the usual recursion for the Catalan numbers. It will also be used to establish a connection between $R_n(1212)$ and lattice paths. 
\bth
\label{rs1212}
We have 
\begin{align*}
\RS_0(1212)&=1,\\
\RS_1(1212)&=1,
\end{align*}
and for $n\geq2$, 
$$
\RS_n(1212)=2\RS_{n-1}(1212)+\sum_{k=1}^{n-2}q^k\RS_k(1212)\RS_{n-k-1}(1212).
$$
\eth
\bprf
The base cases are trivial.
To prove the recursion, we partition $R_n(1212)$ into three disjoint subsets $X$, $Y$, and $Z$ as follows:
\begin{align*}
X&=\{w\in R_n(1212)\ :\ w_1=1 \text{ and there are no other 1s in }w\},\\
Y&=\{w\in R_n(1212)\ :\ w_1w_2=11\},\\
Z&=\{w\in R_n(1212)\ :\ w_1w_2=12 \text{ and there is at least one other 1 in } w\}.
\end{align*}
We claim that we can also describe $X$ as the set of words defined by 
\beq
X=\{w=1(u+1)\ :\ u\in R_{n-1}(1212)\}.
\eeq
To see this, let $u$ be a word in $R_{n-1}(1212)$. From Corollary~\ref{noncross_avoid_cor}, we know $w=1(u+1)$ is an element of $R_n(1212)$, and by definition of $u+1$, the only $1$ in $w$ will be $w_1$. This gives one containment. Now let $w$ be an element of $X$ as originally defined.  Since the leading one in $w$ is unique, let $u+1$ denote the last $n-1$ letters in $w$. By Lemma~\ref{noncross_avoid}, $w$ contains no $xyxy$ subword; in particular, $u+1$ contains no $xyxy$ subword. Standardizing $u+1$ to the RGF $u$ will not create any $xyxy$ subwords, and thus $u$ will be contained in $R_{n-1}(1212)$. This gives the reverse containment, from which we conclude that the two sets are equal. A similar proof, without standardization of the subword, allows us to describe $Y$ as the set 
\beq
Y=\{w=1u\ :\ u\in R_{n-1}(1212)\}.
\eeq

Now note that for any RGF $u$, we have $\rs(u)=\rs(1(u+1))$ and $\rs(u)=\rs(1u)$. Using this fact, and the above characterization of the sets, we can see that $X$ and $Y$ must contribute $\RS_{n-1}(1212)$ each to the total $\RS_n(1212)$ polynomial. 

Finally, we claim that we can characterize $Z$ as 
\begin{align}
Z=\{w=1(u+1)1v\ :\ &u\in R_k(1212) \text{ for } 1\leq k\leq n-2,\nonumber\\
&\st(1v)\in R_{n-k-1}(1212), v\cap (u+1)=\emptyset\}.
\end{align}
First, let $w$ be contained in $Z$ as defined at the beginning of the proof.  By definition of $Z$, $w$ has a nonempty subword of the form $u+1$ consisting of all entries between the first and second $1$ in $w$. Let the length of $u$ be $k$. As with the set $X$, $u+1$ will standardize to $u$, an RGF in $R_k(1212)$. Now let $v$ be the last $n-k-2$ letters in $w$, so that our word is of the form 
$$
w=1(u+1)1v.
$$
Since a $1$ is repeated before $v$, we must have $v_i=1$ or $v_i>\max(u+1)$ for all $i$ by Lemma~\ref{noncross_avoid}, where $v_i$ is the $i$th letter of $v$. This gives $v\cap(u+1)=\emptyset$. Furthermore, there is no $xyxy$ subword contained in $1v$, and standardizing the subword will not create an $xyxy$ pattern. Thus $\st(1v)$ is contained in $R_{n-k-1}(1212)$. This shows one inclusion between the two versions of $Z$. Now let $u$ be an element of $R_k(1212)$, and let $1v'$ be an element of $R_{n-k-1}(1212)$. Corollary~\ref{noncross_avoid_cor} gives that $1(u+1)$ avoids $1212$ as well. Now from the RGF $1v'$, we create the word $1v$ by setting
$$
(1v)_i=
\begin{cases}
\begin{array}{lc}
(1v')_i &\text{ if }(1v')_i=1\\
(1v')_i+\max(u) &\text{ if }(1v')_i\neq1.
\end{array}
\end{cases}
$$
We claim that $w=1(u+1)1v$ is a member of $R_n(1212)$. To see this, note that $u+1$ contains no $xyxy$ subwords, and further $u+1$ shares no integers in common with the rest of $w$. Therefore $u+1$ cannot contribute to an $xyxy$ subword in $w$. Thus if such a subword existed in $w$, it must also exist in $11v$. This is impossible as it would imply an $xyxy$ subword in $1v'$, contradicting our choice of $1v'$. We have now shown the reverse set containment, which implies the desired equality of the two sets. 

With this characterization of $Z$, we can now decompose $\rs(w)$ for $w$ in $Z$ as 
$$
\rs(w)=\rs(u+1)+k+\rs(1v),
$$
where the middle term comes from the contribution to $\rs$ caused by comparing the elements of $u+1$ with the second $1$ in $w$. Summing over all possibilities of $k$, $u$, and $v$, and noting that the $\rs$ of a word is not affected by standardization, we can see that $Z$ will contribute 
$$
\sum_{k=1}^{n-2}q^k\RS_k(1212)\RS_{n-k-1}(1212).
$$
Adding the results obtained from $X$, $Y$, and $Z$ now gives the desired total. 
\eprf

 \begin{figure}
  \begin{center}
\begin{tikzpicture}[scale = .8]
\draw (0,0) --  (1,1) -- (2,1) -- (3,2) -- (4,2)--(5,1) --(6,2) --(7,2) --(8,1) --(9,0) --(10,1) --(11,1)--(12,0)  ;
\draw [dashed]  (0,0)--(12,0);
\draw [dashed]  (2,1) -- (8,1);
\draw [dashed]  (1,1) -- (1,0);
\draw [dashed]  (2,1) -- (2,0);
\draw [dashed]  (3,2) -- (3,0);
\draw [dashed]  (4,2) -- (4,0);
\draw [dashed]  (5,1) -- (5,0);
\draw [dashed]  (6,2) -- (6,0);
\draw [dashed]  (7,2) -- (7,0);
\draw [dashed]  (8,1) -- (8,0);
\draw [dashed]  (10,1) -- (10,0);
\draw [dashed]  (11,1) -- (11,0);
%\filldraw [black] 
%(0,0) circle (5pt);
%\filldraw (0,0) --  (8,0) -- (8,8) -- (0,8) -- (0,0);
\draw (1.5,1.3) node {$b$};
\draw (3.5,2.3) node {$a$};
\draw (6.5,2.3) node {$b$};
\draw (10.5,1.3) node {$b$};
\draw(0.5,-.4) node{$s_1$};
\draw(1.5,-.4) node{$s_2$};
\draw(2.5,-.4) node{$s_3$};
\draw(3.5,-.4) node{$s_4$};
\draw(4.5,-.4) node{$s_5$};
\draw(5.5,-.4) node{$s_6$};
\draw(6.5,-.4) node{$s_7$};
\draw(7.5,-.4) node{$s_8$};
\draw(8.5,-.4) node{$s_9$};
\draw(9.5,-.4) node{$s_{10}$};
\draw(10.5,-.4) node{$s_{11}$};
\draw(11.5,-.4) node{$s_{12}$};
 \end{tikzpicture} 
 \caption{A two-colored Motzkin path}
  \label{tcm}
   \end{center}
 \end{figure}
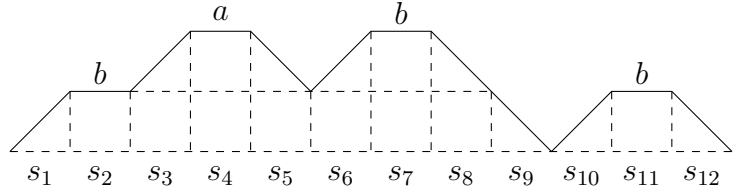

For the next result, we first recall the definition of a Motzkin path. A Motzkin path $P$ of length $n$ is a lattice path in the plane which starts at $(0,0)$, ends at $(n,0)$, stays weakly above the $x$-axis, and which uses vector steps in the form of up steps (1,1), horizontal steps (1,0), and down steps (1,-1).  Let $\cM_n$ denote the set of all Motzkin paths of length $n$.
We write $P=s_1\dots s_n$ for such a path, where 
$$
s_i=
\begin{cases}
U\text{ if the }i\text{th step is an up step},\\
H\text{ if the }i\text{th step is a horizontal step},\\
D\text{ if the }i\text{th step is a down step}.\\
\end{cases}
$$
Given a step $s_i$ in $P$, we can realize $s_i$ geometrically as a line segment in the plane connecting two lattice points in the obvious way. 
Figure~\ref{tcm} displays the Motzkin path $P=UHUHDUHDDUHD$.
Define the \emph{level} of $s_i$, $l(s_i)$, to be the lowest $y$-coordinate in $s_i$.   Continuing with our example path, the sequence of levels of the steps is $0,1,1,2,1,1,2,1,0,0,1,0$.
Note that the level statistic provides a natural pairing of up steps with down steps in a Motzkin path. Namely, we associate an up step $s_i$ with the first down step $s_j$, $j>i$, which is at the same level as $s_i$, i.e. $l(s_i)=l(s_j)$. We will call such steps \emph{paired}.  In Figure~\ref{tcm} the pairs are $s_1$ and $s_9$, $s_3$ and $s_5$, $s_6$ and $s_8$,  and $s_{10}$ and $s_{12}$.

We now define a \emph{two-colored Motzkin path} $P$ of length $n$ to be a Motzkin path of length $n$ 
whose horizontal steps are individually colored using one of the colors $a$ or $b$. We will call an $a$-colored horizontal step an \emph{$a$-step} and a $b$-colored horizontal step a \emph{$b$-step}. For a two-colored Motzkin path $P = s_1 \dots s_{n}$ we will still use $s_i$ equal to $U$ or $D$ for up steps and down steps, but will use $a$ or $b$ instead of $H$ to show the color of the horizontal steps.  In this notation, our example path is $P=UbUaDUbDDUbD$.    Let $\cM^2_n$ denote the set of all two-colored Motzkin paths of length $n$. For two paths $P=s_1\dots s_{n}$ and $Q=t_1\dots t_{m}$ we write $PQ=s_1\dots s_{n} t_1\dots t_{m}$ to indicate their concatenation.  Interestingly, Wachs and White were originally inspired to look at the RGF statistics because of a question posed by   Dennis Stanton (personal communication)   and motivated by the appearance two-colored Motzkin paths  in a combinatorial interpretation of the moments of $q$-Charlier polynomials given by Viennot~\cite{vie:ctg}.

Let the \emph{area} of a path $P$, $\area(P)$, denote the area enclosed between $P$ and the $x$-axis. 
Our example  has $\area(P)=14$.
Defining
\begin{equation}
\label{M_n(q)}
M_n(q)=\sum_{P\in \cM_n^2}q^{\area(P)},
\end{equation}
Drake~\cite{dra:laulp} proved the following recursion.
\bth[\cite{dra:laulp}]
\label{drake}
We have $M_0(q)=1$ and, for $n\geq1$,
$$
M_n(q)=2M_{n-1}(q)+\sum_{k=1}^{n-2}q^kM_k(q)M_{n-k-1}(q).
$$
\qed
\eth
Using Theorems~\ref{rs1212} and~\ref{drake} as well as induction on $n$ immediately gives the following equality.
\bco
\label{rs1212=M}
We have
$$
\RS_n(1212)=M_{n-1}(q)
$$
for all $n\ge1$.\hqed
\eco

Interestingly, it turns out that we also have $\LB_n(1212)=\LB_n(1221)=M_{n-1}(q)$ which will be proved in Section~\ref{p1221}.
In our next result, we prove the previous corollary directly via a bijection between $\cM_{n-1}^2$ and $R_n(1212)$.  Before doing so, 
it will be useful to discuss left to right maxima.  A sequence $w$ of integers has a \emph{left to right maximum} at $i$ if  $w_i>\max\{w_1,\dots, w_{i-1}\}$.  If $w$ is an RGF then clearly the left to right maxima occur exactly when $w_i=1+\max\{w_1,\dots, w_{i-1}\}$.  Another characterization for RGFs is that $w$ has  a left to right maximum at $i$ if and only if $w_i$ is the first occurrence of that value in $w$.

\bth
\label{1212_with_2cmotz}
There is an explicit  bijection $\psi:\cM_{n-1}^2\to R_n(1212)$ such that $\area(P)=\rs(\psi(P))$ for all $P\in \cM_{n-1}^2$.
\eth

\bprf
Given $P=s_1\dots s_{n-1}\in \cM_{n-1}^2$ we define $\psi(P)=w=w_1\dots w_{n}$ 
as follows. Let $w_1=1$ and 
$$
w_{i+1} = \left\{
\begin{array}{ll}
1+\max\{w_1,\dots, w_{i}\} &\text{ if  $s_i$ equals $U$ or $b$},\\
w_{i}& \text{ if $s_i=a$},\\
w_{j}& \text{ if $s_i=D$ is paired with the up step $s_j$}.
\end{array}
\right.
$$
Continuing with our example, $\psi(UbUaDUbDDUbD)=1234435631781$.  
We first show that $\psi$ is well defined.
By definition we have $w_1=1$ and,  for $i>1$, $w_i$  is either equal to $w_j$ for some $j<i$ or $1+\max\{w_1,\dots, w_{i-1}\}$. This implies that that $w_i$ is a positive integer and $w_i\leq 1+\max\{w_1,\dots, w_{i-1}\}$ for all $i$, so $w$ is an RGF. 

Before continuing, it will be useful to see how, given a down step of $P$, we can find its paired up step from $w=\psi(P)$.  If $s_i=D$ is paired with $s_j=U$, $j<i$, then we know that $w_{i+1}=w_j$.  We claim that $w_k>w_j$ for all $k$ in the interval $[j+1,i]$.  It follows that $j$ can be characterized as the largest index $j<i$ with $w_j=w_{i+1}$.  To prove the claim, consider $s_{k-1}$.  If $s_{k-1}=U$ or $b$ then $w_k$ is a left to right maximum and so the desired inequlaity holds.  If $s_{k-1}=a$ or $D$ then $w_k$ equals an entry whose index is earlier in the interval $[j+1,i]$ and so we are again done by induction on $k$.

To show $w=\psi(P)\in R_n(1212)$ suppose, towards a contradiction, that $w$ contains the pattern $1212$  so that we have a subword $w_iw_jw_kw_l$ with $w_i=w_k=x$ and $w_j=w_l=y$.  Pick $i$ to be the largest index $i<j$ such that $w_i=x$, and $k$ to be the smallest index $k>j$ such that $w_j=x$.  Thus there are no copies of $x$ between $w_i$ and $w_k$.  Once $i,k$ are chosen, do the same with $j,l$ so that there is no copy of $y$ between $w_j$ and $w_l$.  Now $w_k$ is not a left to right maximum since it is  preceded by $w_i=w_k$, and $w_k\neq w_{k-1}$ since there is no copy of $x$ between $w_i,w_k$ and $i<j<k$.  It follows from the definition of $\psi$ and the choice of $i,k$ that $s_{k-1}=D$ and is paired with $s_i=U$.  Similarly we get that $s_{l-1}=D$ and is paired with $s_j=U$.  It follows that $P$ has the subword $s_i s_j s_{k-1} s_{l-1} =UUDD$ where the first $U$ and first $D$ are paired, and the second $U$ and second $D$ are paired.  But this kind of pairing can not happen in a Motzkin path and so we have our desired contradiction.

To motivate the definition of the inverse note that, in the definition of $\psi$,  if $s_i=U$ then we have an increase $w_{i}<w_{i+1}$. Since the up step must have a paired down step $s_j$ there must be some $j>i$ with  $w_j=w_i$.   If instead $s_i=b$ we have an increase $w_{i}<w_{i+1}$, but our map will not further repeat $w_i$.
If $s_i=a$ then $w_{i}=w_{i+1}$. 
Finally, if $s_i=D$ then it follows from the discussion two paragraphs before that $w_i>w_{i+1}$.
This  leads us to define, 
for $w\in R_n(1212)$ , the lattice path $\psi^{-1}(w)=P=s_1\dots s_{n-1}$ where
$$
s_i = \left\{
\begin{array}{l}
a \text{ if  $w_{i}=w_{i+1}$},\\
b \text{ if $w_{i}<w_{i+1}$ and there does not exist $j>i+1$ such that $w_j=w_{i}$},\\
U \text{ if $w_{i}<w_{i+1}$ and there exists $j>i+1$ such that $w_j=w_{i}$},\\
D \text{ if $w_{i}>w_{i+1}$}.\\
\end{array}
\right.
$$
By our previous discussion, this map is an inverse on the image of $\psi$.  Since it is known that $|\cM_{n-1}^2|=C_n=|R_n(1212)|$, 
where $C_n$ is the $n$th Catalan number, $\psi$ must be a bijection. 

Lastly we will show that $\area(P)=\rs(\psi(P))$. 
Consider a letter $w_i$. We want to count the number of distinct elements to the  right and smaller than $w_i$. 
We will do so by considering the first occurrence of such an element to contribute to $\rs$, while all other copies of the same element do not.
We must find which steps $s_k$ with $k\geq i$ make $w_{k+1}$ smaller than $w_i$. 
If $s_k=a$ then $w_{k+1}=w_k$  and so $w_{k+1}$ is not a first occurrence.
 If $s_k$ equals $b$ or $U$ then $w_{k+1}$ is a left to right maximum and so  not smaller than $w_i$. 
So the only steps which could result in something to the  right and smaller than $w_i$ are down steps $s_k=D$. Let $s_{\ell}=U$ be its paired up step. First we will consider the case when $\ell=i$. In this case, $w_{k+1}=w_i$ so $w_{k+1}$ is not smaller than $w_i$. If instead $\ell>i$, we have $w_{k+1}=w_{\ell}$ and $w_{k+1}$ is not a first occurrence. Our last case is that $\ell <i$.   But then $i\in [\ell+1,k]$ which, as we proved earlier, implies $w_i> w_{k+1}$
This shows that $w_{k+1}$ is an element to the right and smaller than $w_i$.  Finally, we also have that for all $j$ in $[i,k]$, $w_j>w_{k+1}$. Thus $w_{k+1}$ is the first occurrence of this letter which appears to the right of $w_i$, and so $w_{k+1}$ is counted by $\rs$.

This means that $\rs(w_i)$ is equal to the number of down steps weakly to the right of $s_i$ such that its paired up step is strictly to the left of $s_i$. In the case of $s_i$ equal to $a$,  $b$, or $U$  this calculation is equal to the  level of the step. In the case of $s_i=D$  this calculation is equal to level of the step plus one. Adding the contributions from all the $s_i$ gives  the total area under the path $P$. Since this also counts  $\rs(w)$ we have that $\area(P)=\rs(w)$. 
\eprf

\subsection{Combinations with other patterns}

Next we examine RGFs that avoid $1212$ and another pattern of length $3$. As the patterns $121$, $122$, and $112$ are all subpatterns of $1212$, the only interesting cases to look at are $R_n(111,1212)$ and $R_n(123,1212)$. We start by calculating $\RS_n(111,1212)$. It is easy to combine Theorem~\ref{rgf:len3} and Lemma~\ref{noncross_avoid} to characterize $R_n(111,1212)$. 
\ble
\label{avoid_111_1212}
We have 
$$
R_n(111,1212)=\{w\in R_n(1212)\ : \text{ every element of } w \text{ appears at most twice}\}. 
$$
for all $n\ge0$.\hqed
%Moreover, if $u$ is in $R_{n-1}(111,1212)$, then $1(u+1)$ is in $R_n(111,1212)$, and $1u$ is in $R_n(111,1212)$ if and only if %$u=1(v+1)$ for some $v$ in $R_{n-2}(111,1212)$. \qed
\ele

The following proposition is similar to Theorem~\ref{rs1212} in many respects. First, this proposition provides a $q$-analogue of the standard Motzkin recursion and is proved using techniques similar to those used previously. Furthermore, it will also be used to connect $R_n(111,1212)$ to lattice paths.

\bpr
\label{RS(111,1212)}
We have 
\begin{align*}
\RS_0(111,1212)&=1,\\
\RS_1(111,1212)&=1,
\end{align*}
and for $n\geq 2$,
$$
\RS_n(111,1212)=\RS_{n-1}(111,1212)+\sum_{k=0}^{n-2}q^k\RS_k(111,1212)\RS_{n-k-2}(111,1212).
$$ 
\epr
\bprf
We follow the proof of Theorem~\ref{rs1212} by partitioning $R_n(111,1212)$ into the sets 
\begin{align*}
X&=\{w\in R_n(111,1212)\ :\ w_1=1 \text{ and there are no other 1s in }w\},\\
Y&=\{w\in R_n(111,1212)\ :\ w_1w_2=11\},\\
Z&=\{w\in R_n(111,1212)\ :\ w_1w_2=12 \text{ and there is a single other 1 in } w\}.
\end{align*}
Using the same reasoning as in Theorem~\ref{rs1212} and adding the restrictions of avoiding $111$ gives the equivalent characterizations 
\begin{align*}
X&=\{w=1(u+1)\ :\ u\in R_{n-1}(111,1212)\},\\
Y&=\{w=11(u+1)\ :\ u\in R_{n-2}(111,1212)\},\\
Z&=\{w=1(u+1)1v\ :\ u\in R_k(111, 1212)\text{ for }1\leq k\leq n-2,\\
&\hspace{120pt}\st(v)\in R_{n-k-2}(111, 1212),v\cap 1(u+1)=\emptyset\}.
\end{align*}
From this, the desired recurrence easily follows. 
\eprf

The next result provides an explicit bijection between $R_n(111,1212)$ and $\cM_n$. We first extend  the level statistic defined in the previous subsection to paths. Given a Motzkin path $P=s_1\dots s_n$, we define the level of the path, $l(P)$, to be 
$$
l(P)=\sum_{i=1}^nl(s_i).
$$
In Figure~\ref{tcm}, $l(P)=10$.
It should be noted that if we impose a rectangular grid of unit squares on the first quadrant of the plane, then $l(P)$ simply counts the total area of the unit squares contained below $P$ and above the $x$-axis. We will use our bijection to calculate the generating function for the level statistic taken over all Motzkin paths of length $n$.

\bth
For $n\geq 0$, we have 
$$
\RS_n(111,1212)=\sum_{P\in\cM_n}q^{l(P)}.
$$
\eth
\bprf
We start by defining a bijection $\phi:R_n(111,1212)\mapsto \cM_n$. For any $w=w_1\dots w_n$, we let $\phi(w)=P$, where $P=s_1\dots s_n$ and 
$$
s_i=\begin{cases}
U\text{ if }w_i=w_j\text{ for some }j>i,\\
H\text{ if }w_i\neq w_j\text{ for any }j\neq i,\\
D\text{ if }w_i=w_j\text{ for some }{j<i}.
\end{cases}
$$
To show that $\phi$  is well defined, first note that since $w$ contains at most two copies of any integer, the three cases are disjoint and cover all possibilities.  We also need to show that $P$ is a Motzkin path.  But this is true because the definition of $\phi$ induces a bijection between the up steps and down steps of $\phi(w)$ in which each up step precedes its corresponding down step. 

We will need the fact that  this bijection between up and down steps induced by the definition of $\phi$ is exactly the same as the pairing relationship in the path $\phi(w)$. Formally, we have that $i<j$ and $w_i=w_j$ if and only if $s_i$ is the up step paired with the down step $s_j$. To see this, assume $i<j$ and $w_i=w_j$. Consider the subword $w_i\dots w_j$. As $w$ avoids $111$ and $1212$, we must have $w_k>w_i$ for each $i<k<j$. 
Furthermore, if $i<k<j$ and if $w_k=w_{k'}$ for some other $k'$, we must also have $i<k'<j$ since $w$ has no $xyxy$ pattern. Thus the subpath $s_{i+1}\dots s_{j-1}$ is a Motzkin path translated to start at the level of $s_{i+1}$. It follows that $s_i$ and $s_j$ must be paired. This in fact proves the equivalence, as the pairing relationship on a Motzkin path is unique. 

To invert $\phi$, note first that for  $w\in R_n(111,1212)$, the left to right maxima occur precisely at those $i$ corresponding to the first two cases in the definition of $\phi$.
So given $P=s_1\dots s_n$  a path in $\cM_n$, we define $\phi^{-1}(P)=w_1\dots w_n$ by $w_1=1$ and, 
for $j\geq2$,
$$
w_j=
\begin{cases}
\begin{array}{ll}
1+\max\{w_1,\dots, w_{j-1}\} &\text{ if }s_j=U\text{ or }s_j=H,\\
w_i &\text{ if } s_j \text{ is a down step paired with } s_i.
\end{array}
\end{cases}
$$
The proof that this function is well defined is similar to the one given for $\phi$ and so omitted.
And from the description of $\phi$ in terms of left to right maxima as well as our remarks about $\phi$'s relationship to the pairing bijection, it should be clear that this is the inverse function.

It now suffices to show that $\rs(w)=l(\phi(w))$ for any $w$ in our avoidance class. 
Let $w=w_1\dots w_n$ and $\phi(w)=s_1\dots s_n$. We will prove the stronger statement that $\rs(w_i)=l(s_i)$ for $1\leq i\leq n$. To do this, note that if $l(s_i)=k$, then there are precisely $k$ down steps to the right of $s_i$ whose paired up steps precede $s_i$ in $\phi(w)$. 

Now assume $\rs(w_i)=k$. By definition, there are $k$ integers $w_{j_1},\dots,w_{j_k}$ to the right of and smaller than $w_i$. As $w$ is an RGF, each of these integers also appear to the left of $w_i$ in $w$. By the definition of $\phi$, the $s_{j_1},\dots,s_{j_k}$ are down steps which follow $s_i$ in $\phi(w)$ whose paired up steps precede $s_i$. This gives $l(s_i)\geq k$.

To see that we actually have equality, assume that there is another down step $s_l$ which follows $s_i$ in $\phi(w)$. We know that in $w$, $w_i\leq w_l$, as $w_l$ does not contribute to $\rs(w_i)$. If $w_i=w_l$, then in fact $s_i$ and $s_l$ must be paired via level, and thus $s_l$ does not change $l(s_i)$. Finally, we deal with the case $w_i<w_l$. As $s_l$ is a down step, there must exist another letter $w_{l'}$ in $w$ with $l'<l$ and $w_{l'}=w_l$. In order for $w$ to be an RGF and to avoid $1212$, one can see that we must also have $i<l'$. 
Hence $s_l$ and its paired up step $s_{l'}$ both follow $s_i$ in $\phi(w)$, and thus $s_l$ will still not affect $l(s_i)$. Thus we have $l(s_i)=k=\rs(w_i)$ as desired. 
\eprf

We conclude the section with a simple proposition characterizing $R_n(123,1212)$. As the result follows easily from Theorem~\ref{rgf:len3},  Proposition~\ref{noncross_avoid}, and standard counting techniques, we leave the proof to the reader.  
\bpr
\label{R(123,1212)}
If $w$ is contained in $R_n(123,1212)$, then 
$$
w=1^l2^i1^{n-i-l}
$$
for some $l\geq 1$, $i\geq0$ satisfying $l+i\leq n$. As such, for $n\ge0$ we have 
$$
\LB_n(123,1212)=\RS_n(123,1212)=1+\sum_{k=0}^{n-2}(n-k-1)q^k
$$
and

\medskip

\eqed{\LS_n(123,1212)=\RB_n(123,1212)=1+\sum_{k=1}^{n-1}(n-k)q^k.}

\epr

%%%%%%%%%%%%%%%%%%%%%%%%%%%%%%%%%%%%%

\section{The pattern $1221$}
\label{p1221}

\subsection{Nonnesting partitions}

The term ``nonnesting" has been defined in different ways in the literature.   In some sources a nonnesting partition is a partition $\pi$ where we can never find four elements $a<x<y<b$ such that $a,b\in A$ and $x,y\in B$ for two distinct blocks $A,B$. This is the sense used in Klazar's paper~\cite{kla:aas} and is equivalent to a partition avoiding $14/23$. 

In other papers, including Klazar's article~\cite{kla:cpsII},
 a   partition $\pi$ is nonnesting if, whenever there are four elements $a<x<y<b$ such that $a,b\in A$ and $x,y\in B$ for two distinct blocks $A,B$, then there exists a $c\in A$ such that $x<c<y$. This definition is often given using arc diagrams.  We draw the {\it arc diagram} of a partition of $[n]$ by writing $1$ through $n$ on a straight line and drawing arcs $(a,b)$ if $a<b$ are in a block and consecutive when writing the block in increasing order, see Figure~\ref{arc diagrams}.
A {\it nesting} is a pair of arcs $(a,b)$ and $(x,y)$ such that $a<x<y<b$, and we will say in this case that the pair of arcs {\it nest}.
For completeness, we prove that having no nesting arcs is equivalent to the second definition of a nonnesting partition.
 It is known that the number of partitions satisfying either of these two equivalent conditions is the Catalan numbers, $C_n$.
\begin{prop} The following conditions are equivalent for a partition $\pi$. 
\begin{enumerate}
\item If there are four elements $a<x<y<b$ such that $a,b\in A$ and $x,y\in B$ for two distinct blocks $A,B$, then there exists a $c\in A$ such that $x<c<y$.
\item The arc diagram for $\pi$ contains no nestings. 
\end{enumerate}
\end{prop}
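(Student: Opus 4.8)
The plan is to prove both implications in contrapositive form, since in each direction it is cleanest to start from a concrete violating configuration. The only fact I will need repeatedly is the defining property of an arc: a pair $(a,b)$ is an arc exactly when $a$ and $b$ lie in one block with no element of that block strictly between them.

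For $(2)\Rightarrow(1)$ I would assume condition (1) fails, so there are distinct blocks $A\neq B$ and elements $a<x<y<b$ with $a,b\in A$, $x,y\in B$, and no $c\in A$ satisfying $x<c<y$. The subtlety is that $a,b$ and $x,y$ need not themselves be arcs, so I must manufacture a genuine nesting. I would let $a'$ be the largest element of $A$ with $a'\leq x$ and $b'$ the smallest element of $A$ with $b'\geq y$; these exist because $a\leq x$ and $y\leq b$. Since blocks are disjoint and $x,y\in B$, we get $a'\neq x$ and $b'\neq y$, hence $a'<x<y<b'$. Next I would verify $(a',b')$ is an arc: any $z\in A$ with $a'<z<b'$ must satisfy $z>x$ (by maximality of $a'$) and $z<y$ (by minimality of $b'$), i.e. $x<z<y$, contradicting the hypothesis. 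Finally, taking $x'$ to be the element of $B$ immediately after $x$ gives an arc $(x,x')$ with $a'<x<x'\leq y<b'$, so $(a',b')$ and $(x,x')$ nest, violating (2).

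For $(1)\Rightarrow(2)$ I would assume a nesting exists, say arcs $(a,b)$ and $(x,y)$ with $a<x<y<b$, where $a,b\in A$ and $x,y\in B$. First I must check $A\neq B$: were $A=B$, the element $x$ would lie strictly between the consecutive-in-$A$ pair $a,b$, which is impossible. Then, because $a,b$ are consecutive in $A$, no element of $A$ lies strictly between $a$ and $b$, and a fortiori none lies strictly between $x$ and $y$; thus no $c\in A$ has $x<c<y$, so condition (1) fails. This direction is essentially immediate once the two blocks are seen to be distinct.

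The hard part will be the $(2)\Rightarrow(1)$ direction, precisely because the failure of (1) is phrased in terms of arbitrary block elements rather than arcs. The real content is the ``closest element'' construction that promotes $a,x,y,b$ to a bona fide pair of nesting arcs, together with the verification that $(a',b')$ genuinely is an arc; everything else is routine comparison of indices.
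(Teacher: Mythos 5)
Your proof is correct and follows essentially the same route as the paper's: both directions are argued contrapositively, and the key construction in $(2)\Rightarrow(1)$ --- taking the largest element of $A$ at most $x$, the smallest element of $A$ at least $y$, and the successor of $x$ in $B$ to manufacture a genuine pair of nesting arcs --- is exactly the paper's. Your explicit check that $A\neq B$ in the $(1)\Rightarrow(2)$ direction is a small point the paper leaves implicit, but otherwise the arguments coincide.
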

\begin{proof}
We will first show that if a partition fails condition $1$, then its arc diagram has a nesting. Say that there are four elements $a<x<y<b$ such that $a,b\in A$ and $x,y\in B$ for two distinct blocks $A,B$ but there is no $c\in A$ such that $x<c<y$. Since there is an $a\in A$ with $a<x$ there is a largest element $\bar{a}\in A$ where $\bar{a}<x$. Similarly, there is a smallest $\bar{b}\in A$ with $y<\bar{b}$.  Since there is no element of $A$ between $x$ and $y$, $(\bar{a},\bar{b})$ must be an arc.
Also there is  a smallest element $\bar{y}\in B$ such that $x<\bar{y}$ so that $(x,\bar{y})$ is an arc.  Since $\bar{a}<x<\bar{y}<\bar{b}$ these arcs nest which is a contradiction.

Conversely, assume that the arc diagram has two arcs $(a,b)$ and $(x,y)$ which nest with $a<x<y<b$. By construction of the arcs, this implies that $a$ and $b$ are consecutive elements in their block $A$, so there does not exist a $c\in A$ such that $a<c<b$ and  the first condition is false.  
\end{proof}

There is another notion of nonnesting which we will call left nonnesting and can be defined by a different collection of arcs.  For each block $B$ we will draw all arcs of the form $(\min B, b)$ with $b\in B\setminus\{\min B\}$, and call the diagram with these arcs the {\it left arc diagram}. 
An example is displayed in Figure~\ref{arc diagrams}.
If a partition's left arc diagram has no pair of arcs which nest then we will call this partition  {\it left nonnesting} 
to distinguish our term from the previous two definitions of nonnesting. Let this set be $\text{LNN}_n$. 

%Jel\'{i}nek and Mansour in their paper~\cite{jm:pap} defined nonnesting by requiring that a partition's associated RGF avoid $1221$, and they found that $|%R_n(1221)|=C_n$, the $n$th Catalan number. For a partition $\pi$, it turns out that $w(\pi)$ avoids $1221$ if and only if its left arc diagram contains no nestings.  %As result, the partitions in $\text{LNN}_n$ are also counted by the $n$th Catalan number.

\begin{prop} We have
$$R_n(1221)=w(\text{LNN}_n).$$
\end{prop}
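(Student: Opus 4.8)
The plan is to exploit the bijection $w\colon \Pi_n \to R_n$ and reduce the claimed set equality to a pointwise statement matching patterns with arcs. Since $w$ is a bijection, every RGF of length $n$ is $w(\pi)$ for a unique $\pi \vdash [n]$, so it suffices to show that for each $\pi$, the RGF $w(\pi)$ avoids $1221$ if and only if $\pi$ is left nonnesting. Equivalently, negating both sides, I would prove: \emph{$w(\pi)$ contains $1221$ if and only if the left arc diagram of $\pi$ contains a nesting.} Recall that an occurrence of $1221$ is a subword $w_iw_jw_kw_l$ with $i<j<k<l$ and $w_i=w_l<w_j=w_k$, and that in $w(\pi)$ the value at position $i$ is the index of the block containing $i$, with block indices increasing in the order of their minima.

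For the forward direction, suppose $w=w(\pi)$ has an occurrence $w_iw_jw_kw_l$ of $1221$. Writing $p=w_i=w_l$ and $q=w_j=w_k$, this says $i,l\in B_p$ and $j,k\in B_q$, and $p<q$ forces $\min B_p<\min B_q$. I would then pass to the two arcs $(\min B_p,l)$ and $(\min B_q,k)$ of the left arc diagram. These are genuine arcs because $l>i\ge \min B_p$ and $k>j\ge \min B_q$, so neither right endpoint equals its block minimum. The chain $\min B_p<\min B_q<k<l$ (from $p<q$, from $\min B_q\le j<k$, and from $k<l$) shows that these two arcs nest, so $\pi$ is not left nonnesting.

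For the reverse direction, suppose the left arc diagram has a nesting $(a,b)$ and $(x,y)$ with $a<x<y<b$. Every arc emanates from the minimum of its block, and since $a\neq x$ the two arcs come from distinct blocks; write $a=\min A$, $x=\min B$ with $A=B_p$ and $B=B_q$, so $a<x$ gives $p<q$. Then $a,b\in A$ and $x,y\in B$, so reading $w(\pi)$ at the four positions $a<x<y<b$ yields the values $p\,q\,q\,p$, which standardizes to $1221$ because $p<q$. Hence $w(\pi)$ contains $1221$. Combining the two directions gives the pointwise equivalence, and transporting it across the bijection $w$ yields $R_n(1221)=w(\text{LNN}_n)$.

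The one delicate point, and where I expect the only real care to be needed, is the forward direction: the positions $j,k$ witnessing the repeated larger value $q$ need not be block minima, so one cannot simply use an arc $(j,k)$. The fix is to slide the left endpoint back to $\min B_q$ while keeping the right endpoint $k$ (and similarly $\min B_p$ with right endpoint $l$); this is exactly what makes the four-term inequality $\min B_p<\min B_q<k<l$ close up into a nesting. I would also make sure to record the boundary bookkeeping verified above, namely that $k\neq\min B_q$ and $l\neq\min B_p$, so that all arcs invoked are legitimate members of the left arc diagram.
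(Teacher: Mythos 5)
Your proof is correct and follows essentially the same route as the paper's: both directions reduce to sliding the left endpoints of an occurrence of $1221$ back to the block minima (the paper phrases this as re-choosing the occurrence so the first $i$ and $j$ are first occurrences), yielding the nesting $\min B_p<\min B_q<k<l$, and conversely reading off the subword $pqqp$ from a nesting of left arcs. Your version just spells out the bookkeeping (that $k\neq\min B_q$ and $l\neq\min B_p$) a bit more explicitly than the paper does.
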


\begin{proof} First we will show that if a partition's left arc diagram contains a nesting then its associated RGF has the pattern $1221$. 
Let $\pi=B_1/\dots /B_k$ be a partition of $[n]$. Say that its left arc diagram has a nesting which means that we have arcs $(a,b)$ and $(x,y)$ such that $a<x<y<b$. 
Since these are arcs from the left arc diagram we know that $a=\min B_i$ and $x=\min B_j$ for some distinct blocks $B_i$ and $B_j$,
 and since we order the blocks of $\pi$ so that their minimum elements increase we know that $i<j$. As result $w(\pi)$ has the subword $ijji$ which is the pattern $1221$. 

Conversely, say that we have an RGF $w$ with the pattern $1221$, so it has a subword $ijji$ with $i<j$.  Pick the subword so that the first $i$ and $j$ are the first occurrences of these letters in $w$.  Thus they correspond to minima in their respective blocks of the corresponding partition $\pi$.  It folllows that  the two $i$'s and two $j$'s  give rise to nesting arcs in the left arc diagram of $\pi$.
\end{proof}

 \begin{figure}
  \begin{center}
   \begin{tikzpicture}[scale = .8]
\filldraw [black] 
(1,0) circle (2pt)
(2,0) circle (2pt)
(3,0) circle (2pt)
(4,0) circle (2pt)
(5,0) circle (2pt)
(6,0) circle (2pt)
(7,0) circle (2pt);
\draw (1,-.5) node {$1$};
\draw (2,-.5) node {$2$};
\draw (3,-.5) node {$3$};
\draw (4,-.5) node {$4$};
\draw (5,-.5) node {$5$};
\draw (6,-.5) node {$6$};
\draw (7,-.5) node {$7$};
\draw (1,0) .. controls (1,1) and (3,1) .. (3,0);
\draw (3,0) .. controls (3,.7) and (4,.7) .. (4,0);
\draw (2,0) .. controls (2,1.5) and (6,1.5) .. (6,0);
\draw (6,0) .. controls (6,.7) and (7,.7) .. (7,0);
 \end{tikzpicture} 
  \hspace{.5in}
\begin{tikzpicture}[scale = .8]
\filldraw [black] 
(1,0) circle (2pt)
(2,0) circle (2pt)
(3,0) circle (2pt)
(4,0) circle (2pt)
(5,0) circle (2pt)
(6,0) circle (2pt)
(7,0) circle (2pt);
\draw (1,-.5) node {$1$};
\draw (2,-.5) node {$2$};
\draw (3,-.5) node {$3$};
\draw (4,-.5) node {$4$};
\draw (5,-.5) node {$5$};
\draw (6,-.5) node {$6$};
\draw (7,-.5) node {$7$};
\draw (1,0) .. controls (1,1) and (3,1) .. (3,0);
\draw (1,0) .. controls (1,1.5) and (4,1.5) .. (4,0);
\draw (2,0) .. controls (2,1) and (6,1) .. (6,0);
\draw (2,0) .. controls (2,1.5) and (7,1.5) .. (7,0);
 \end{tikzpicture} 
 \caption{The  arc diagram and left arc diagram for the partition $134/267/5$.}
  \label{arc diagrams}
   \end{center}
 \end{figure}
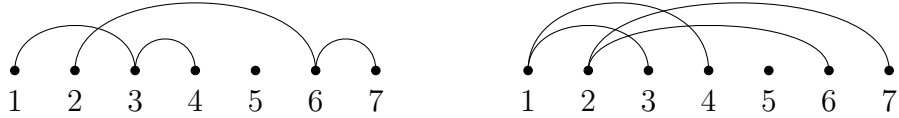

The rest of this section will describe $R_n(1221)$, some of its generating functions, and some connections to other patterns. We will prove that $\LB_n(1221)=\RS_n(1212)$ by showing that there exists a bijection from two-colored Motzkin paths to $R_n(1221)$ which maps area  to $\lb$, and then the result will follow from Theorem~\ref{1212_with_2cmotz}. We further use this bijection and  previous methods to determine the generating function for some  pairs of RGFs which include $1221$. We end the section by showing $\LB_n(1221)=\LB_n(1212)$ and summarizing all the equalities we have proved.

\subsection{{The pattern $1221$ by itself}}

For an RGF $w=w_1 \dots w_n$ we will call a letter $w_i$ {\it{repeated}} if there exists a $j<i$ such that $w_j=w_i$. If a letter is not a repeated letter, we will call it a {\it{first occurrence}}.  Since $w$ is an RGF, the first occurrences are exactly the left to right maxima.

\begin{lem}
A word $w\in R_n(1221)$ if and only if the subword of all repeated letters in $w$ is weakly increasing. 
\label{1221structure}
\end{lem}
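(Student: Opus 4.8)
The plan is to prove the contrapositive form of the lemma, namely the equivalence that the subword of repeated letters fails to be weakly increasing \emph{if and only if} $w$ contains the pattern $1221$. The engine of the argument is the characterization recalled just before the lemma: in an RGF the first occurrences are exactly the left-to-right maxima. In particular, since the values are introduced in the order $1,2,3,\dots$, the first occurrence of a smaller value always precedes the first occurrence of a larger value. This is the one structural fact I will lean on repeatedly.

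For the forward direction of the equivalence, I would suppose the subword of repeated letters has a descent, so that there are two repeated positions $c<d$ with $w_c>w_d$. Writing $y=w_c$ and $x=w_d$, we have $x<y$. Because $w_c$ and $w_d$ are repeated, each has an earlier (first) occurrence; let $a$ and $b$ denote the first occurrences of the values $x$ and $y$ respectively. Then $b<c$ and $a<d$, since $c$ and $d$ are repeated occurrences and hence come strictly after the corresponding first occurrences, while $a<b$ because $x<y$ and first occurrences appear in increasing order of value. Assembling these inequalities gives $a<b<c<d$ with $(w_a,w_b,w_c,w_d)=(x,y,y,x)$ and $x<y$, which standardizes to $1221$.

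For the reverse direction, I would suppose $w$ contains $1221$, realized by indices $a<b<c<d$ with $w_a=w_d<w_b=w_c$. Then $w_c$ and $w_d$ are repeated letters, since their values already occurred at the earlier positions $b<c$ and $a<d$ respectively. Moreover $c<d$ while $w_c=w_b>w_a=w_d$, so these two entries form a strict descent inside the subword of repeated letters, which is therefore not weakly increasing. The two directions are near-mirror images, so this completes the equivalence and hence the lemma.

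The step I expect to require the most care is the index bookkeeping in the forward direction, specifically extracting $a<b$ (rather than merely $a<d$ and $b<c$) so that the four chosen positions genuinely occur in the order $a<b<c<d$. This is precisely where the property that first occurrences are left-to-right maxima appearing in increasing value order is essential: without it one would only know that each repeated letter has some earlier twin, which is not by itself enough to pin down the required nesting of indices. Everything else is routine bookkeeping with no case analysis beyond the single descent-versus-occurrence comparison.
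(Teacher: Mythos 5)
Your proof is correct and is essentially the paper's own argument: both directions rest on the same structural fact that in an RGF the first occurrence of a smaller value precedes the first occurrence of a larger value, which lets a descent $w_c>w_d$ among repeated letters be completed to an $xyyx$ subword, and conversely lets the last two letters of any $xyyx$ occurrence be read as a descent among repeated letters. You simply spell out the index bookkeeping more explicitly than the paper does; no further changes are needed.
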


\begin{proof}
Say that $w$ contains the pattern $1221$ and so has a subword $xyyx$ for some $x<y$. 
The second $yx$ are repeated letters in $w$. This implies that there is a decrease in the subword of all repeated letters. 

Conversely, say that the subword of all repeated letters of $w$ has an decrease $yx$ with $x<y$.
 Since these are repeated letters in an RGF the first $y$ of $w$ appears earlier, and the first $x$ in $w$ appears earlier than the first $y$. Hence we have a subword $xyyx$ with $x<y$ and the pattern $1221$. 
\end{proof}

Using the previous lemma we can define a surjection $\inc:R_n\rightarrow R_n(1221)$. The map will take a $w\in R_n$ and will output $\inc(w)=v$ which is $w$ with its subword of repeated letters put in weakly increasing order. For example if $w=1112221331$ then $\inc(w)=1112112323$. 

To see this map is well defined we must first show that $v$ is an RGF. But the subword of repeated letters is rearranged to be weakly increasing which forces the maximum of any prefix to weakly decrease.  Since the left to right maxima of $w$  do not move in this process, they do not change in passing  to $v$ so that the latter is still an RGF.  Also, $v$ avoids $1221$ by
Lemma~\ref{1221structure}, showing $\inc$ is well defined.

In the next lemma we show that $\inc$ preserves $\lb$. Note that because $w$ is an RGF, all the numbers in the interval $[w_i+1,\max\{w_1,\dots,w_{i-1}\}]$ appear to the left of $w_i$ and are larger than $w_i$, so 
\begin{equation}
\lb(w_i)=\max\{w_1,\dots,w_{i-1}\}-w_i.
\label{eq:lb(w_i)}
\end{equation}

\begin{lem}
Let $v$ be a rearrangement of $w$ such that both have the same left to right maxima in the same places.
Then $\lb(v)=\lb(w)$. In particular, $\lb(w)=\lb(\inc(w))$. 
\label{lbPreserved}
\end{lem}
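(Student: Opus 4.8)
The plan is to reduce everything to the additive formula~(\ref{eq:lb(w_i)}) for $\lb(w_i)$ and then exploit the fact that $v$ and $w$ share both a common multiset of entries and a common sequence of running maxima. First I would record the structural observation that, for an RGF, the running maximum $\max\{w_1,\dots,w_{i-1}\}$ equals the number of left to right maxima occurring among the first $i-1$ positions (since the left to right maxima take the values $1,2,3,\dots$ in order of appearance). Because $v$ and $w$ have identical left to right maxima in identical positions by hypothesis, this immediately gives $\max\{v_1,\dots,v_{i-1}\}=\max\{w_1,\dots,w_{i-1}\}$ for every $i$; write $M_{i-1}$ for this common value. The same hypothesis shows that the set $R$ of positions carrying repeated letters (the complement of the left to right maxima) is the same for $v$ and for $w$.

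Next I would split the statistic over $R$ and its complement. A left to right maximum contributes $0$ to $\lb$, so only positions in $R$ matter; and for $i\in R$ we have $w_i\le M_{i-1}$, so formula~(\ref{eq:lb(w_i)}) applies and gives $\lb(w_i)=M_{i-1}-w_i$, and likewise $\lb(v_i)=M_{i-1}-v_i$. Summing over the repeated positions yields
$$\lb(w)-\lb(v)=\sum_{i\in R}(v_i-w_i)=\sum_{i\in R}v_i-\sum_{i\in R}w_i.$$

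The final step is to show these two sums agree. Since $v$ is a rearrangement of $w$, the total sums $\sum_i v_i$ and $\sum_i w_i$ coincide; since the left to right maxima of $v$ and $w$ agree in value and position, the partial sums over the complement of $R$ also coincide; subtracting the latter from the former gives $\sum_{i\in R}v_i=\sum_{i\in R}w_i$, whence $\lb(w)=\lb(v)$. The ``in particular'' claim then follows because $\inc(w)$ is by construction a rearrangement of $w$ that leaves the first occurrences (the left to right maxima) fixed in place, so the hypothesis of the lemma is satisfied.

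I do not anticipate a serious obstacle here; the only point that needs genuine care is the justification that the running maxima are preserved, which I would argue through the ``count of left to right maxima'' description rather than by tracking individual repeated letters through the rearrangement. It is also worth checking the boundary index $i=1$: since $w_1=1$ is always a first occurrence, the index $1$ never lies in $R$, so~(\ref{eq:lb(w_i)}) is never invoked there and no special convention for the empty running maximum is required.
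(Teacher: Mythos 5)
Your proof is correct and follows essentially the same route as the paper's: both rest on the additive formula~(\ref{eq:lb(w_i)}) together with the observations that the running maxima and the multiset of entries are preserved. Your version is in fact slightly more careful, since you apply the formula only at repeated positions (where it is exactly valid) and handle the left to right maxima separately, whereas the paper sums the expression $\max\{w_1,\dots,w_{i-1}\}-w_i$ over all $i$ and implicitly relies on the off-by-one contributions at the left to right maxima cancelling between $v$ and $w$.
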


\begin{proof}  
Since $w$ and $v$ only have their repeated letters rearranged and their left to right maxima fixed,
we know $\max\{w_1,\dots,w_{i}\}=\max\{v_1,\dots,v_{i}\}$ for all $i$ and $\{v_1,\dots ,v_n\}=\{w_1,\dots ,w_n\} $ as multisets. Using Equation~\ref{eq:lb(w_i)},
$$\lb(w)=\sum_{i=1}^n(\max\{w_1,\dots,w_{i-1}\}-w_i)=\sum_{i=1}^n(\max\{v_1,\dots,v_{i-1}\}-v_i)=\lb(v).$$
The special case of $v=\inc(w)$ now follows from the definition of the function.
\end{proof}

We wish to show that the generating function $\RS_n(1212)$ discussed in Section~\ref{p1212}
is equal to $\LB_n(1221)$.  The proof will be similar to that of Theorem~\ref{1212_with_2cmotz} in that we will construct a bijection $\beta$  from two-colored Motzkin paths length $n-1$ to $R_n(1221)$ which maps  $\area$ to $\lb$. 
The map $\beta$ will not be difficult to describe. However, proving that $\beta$ is a bijection will require a detailed argument. We define a map $\alpha:R_k(1221)\rightarrow R_{k+2}(1221)$ and provide the following lemma to assist us. 
This map will be useful when discussing two-colored Motzkin paths which are obtained from a smaller path by prepending an up step and appending a down step.
Given any $v\in R_k(1221)$ we define $\bar{v}=\bar{v}_1\bar{v}_2\dots \bar{v}_k$ such that
\beq
\label{bar{v}}
\bar{v}_i=\begin{cases}
\begin{array}{ll}
v_i+1&\text{ if } v_i \text{ is a first occurrence},\\
v_i&\text{ else}.
\end{array}
\end{cases}
\eeq
It is not hard to see that $u=1\bar{v}1$ is an RGF, but it may not avoid $1221$, so we define 
$$\alpha(v)=\inc(u)$$ which is in $R_{k+2}(1221)$ by Lemma~\ref{1221structure}. 
For example, if $v=1212344$ 
will have  $u=1\bar{v}1=123124541$ and $\alpha(v)=123114524$.

\begin{lem}
\label{alpha map}
For $k\geq 0$ the map $\alpha:R_k(1221)\rightarrow R_{k+2}(1221)$ is an injection. Furthermore, the image of $\alpha$ is precisely the $w\in  R_{k+2}(1221)$ satisfying the following three properties. 
\begin{enumerate}
\item[(i)] The word $w$ has more than one $1$ and ends in a repeated letter.
\item[(ii)] If $w_i$ is a repeated letter then $w_i<\max\{w_1,\dots ,w_{i-1}\}$.
\item[(iii)] If, for $i\le j$, we have $w_{i-1}$ and $w_{j+1}$ are repeated letters with $w_i w_{i+1}\dots w_j$  all first  occurrences 
 then  $w_{j+1}<w_i-1$.
\end{enumerate}

\end{lem}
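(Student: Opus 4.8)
The plan is to first record the precise effect of $\alpha$ on first occurrences and on repeated letters, then use this to verify (i)--(iii), and finally to invert $\alpha$ on the set cut out by (i)--(iii). Write $m=\max(v)$ and recall that by Lemma~\ref{1221structure} the repeated subword of $v$ is weakly increasing, say $r_1\le r_2\le\cdots\le r_s$ occurring at positions $q_1<q_2<\cdots<q_s$. Prepending a $1$ and adding $1$ to each first occurrence turns the left to right maxima of $v$ (values $1,\dots,m$) into the left to right maxima $2,\dots,m+1$ of $\bar v$, so in $u=1\bar v1$ the first occurrences are the leading $1$ together with these, carrying the values $1,2,\dots,m+1$, while the repeated letters of $u$, read left to right, are $r_1,\dots,r_s$ followed by the trailing $1$. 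Since $\inc$ fixes the left to right maxima and their positions and merely sorts the repeated letters into weakly increasing order, in $w=\alpha(v)$ the first occurrences still carry $1,2,\dots,m+1$, and the $s+1$ repeated slots, in left to right order, receive the sorted multiset $1\le r_1\le\cdots\le r_s$; that is, the first repeated slot of $w$ receives $1$, the $t$-th receives $r_{t-1}$ for $2\le t\le s$, and the last (position $k+2$) receives $r_s$. I will refer to this as the \emph{placement description}.

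Granting the placement description, (i)--(iii) follow. For (i): $w$ has the first occurrence $1$ in position $1$ and at least one further $1$ in its first repeated slot, and its last letter $w_{k+2}=r_s$ is a repeated letter. For (ii) it suffices, since $\inc$ preserves positions of maxima and hence the running maxima, to check the strict inequality in $u$ and then to see that sorting cannot destroy it. In $u$ a repeated letter coming from $v_{p-1}$ has value $v_{p-1}\le\max\{v_1,\dots,v_{p-2}\}$ by Equation~\ref{eq:lb(w_i)} applied to $v$, which is one less than the running maximum in $u$; and the trailing $1$ lies below the running maximum $m+1$. Thus every repeated letter of $u$ is strictly below its running maximum. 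Sorting preserves this by a pigeonhole argument: list the repeated slots left to right with (weakly increasing) running maxima $M_1\le\cdots\le M_{s+1}$, and note that if the $t$-th smallest repeated value were $\ge M_t$ then all $s+2-t$ values from the $t$-th on would be $\ge M_t$, yet each original value $a$ with $a\ge M_t$ satisfies $a<(\text{its own }M)$, so its slot has index $>t$, giving at most $s+1-t$ such slots, a contradiction. For (iii), the slot holding $w_{j+1}$ is, by the placement description, occupied by some $r_{t-1}$ (or $r_s$); tracing $r_{t-1}=v_{q_{t-1}}$ back to $v$, it is the repeated letter sitting immediately before the first occurrence that opens the run, an occurrence of value $c-1$ equal to one more than the running maximum, so $r_{t-1}\le(c-1)-1=c-2<c-1$. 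The boundary case in which this slot would be the very first repeated slot cannot occur, for then no repeated letter precedes the run and the hypothesis of (iii) is vacuous.

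For injectivity and the reverse inclusion I would build the inverse $\gamma$ explicitly. Given any $w$, fix its first occurrences and cyclically rotate its repeated subword by moving the leading repeated letter to the final repeated slot; then delete the resulting leading and trailing $1$'s and subtract $1$ from every first occurrence to obtain a word $\gamma(w)$ of length $k$. The placement description shows at once that $\gamma(\alpha(v))=v$, which proves $\alpha$ injective. For the reverse inclusion, suppose $w$ satisfies (i)--(iii); one must check that $v:=\gamma(w)$ lies in $R_k(1221)$ and that $\alpha(v)=w$. That $v$ avoids $1221$ is immediate because its repeated subword is the weakly increasing tail $r_1\le\cdots\le r_s$ of $w$'s sorted repeated subword; that $v$ is an RGF follows from (ii), which guarantees that the un-shifted first occurrences still increase by exactly one and keeps each repeated value at most the running maximum after the leftward rotation; and re-applying $\alpha$ returns exactly $w$ because (iii) certifies that each repeated value is small enough to have legitimately originated one slot earlier, so the rotation is undone by the sorting step inside $\inc$.

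The main obstacle is this last reverse inclusion: proving that (i)--(iii) are jointly \emph{sufficient}, rather than merely necessary, for membership in the image. The forward verification of each property is essentially local, but for the converse one must show that the single global reconstruction $\gamma$ simultaneously produces a bona fide RGF, respects $1221$-avoidance, and is genuinely inverse to $\alpha$. Property (iii) is exactly the hypothesis that makes the value bookkeeping in the rotation step consistent, and checking this compatibility carefully — in particular that applying $\alpha$ to $\gamma(w)$ re-sorts the rotated repeated letters back into the slots prescribed by $w$ — is where the real work lies.
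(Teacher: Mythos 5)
Your proof takes essentially the same route as the paper's: your ``placement description'' is the paper's observation that $\inc$ turns the repeated subword $r1$ of $u=1\bar v1$ into $1r$, injectivity is proved by the same explicit inversion (cyclically shift the repeated subword back, strip the outer $1$'s, decrement the left-to-right maxima), and (i)--(iii) are verified by tracing each repeated slot of $w$ back through $u$ to $v$, exactly as in the paper. Two points of comparison are worth recording. Your pigeonhole argument for (ii) is in fact more careful than the paper's: the paper asserts that each repeated slot's value can only get weakly smaller under $\inc$, which is false for the last slot (its value rises from $1$ to $r_s$), although the conclusion survives there since that slot's running maximum is $m+1$; your sorting-against-weakly-increasing-thresholds argument handles all slots uniformly. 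As for the reverse inclusion, which you candidly flag as unfinished: the paper does not prove it either --- its proof establishes only injectivity and that the image satisfies (i)--(iii), and those are the only facts used in the later applications of the lemma. Your sketch of how to close it is sound: for $w$ satisfying (i)--(iii), property (i) forces the smallest repeated value of $w$ to be $1$, and properties (ii) and (iii) together say precisely that each repeated value is strictly below the running maximum at the \emph{preceding} repeated slot ((iii) when first occurrences intervene, (ii) when they do not), which is exactly the bound needed for $\gamma(w)$ to be a $1221$-avoiding RGF whose image under $\alpha$ re-sorts to $w$. Writing out that verification would complete a proof of the full ``precisely'' claim, something the published argument does not actually do.
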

\begin{proof} 
We will start by showing that $\alpha$ is injective. Given a $v\in R_k(1221)$, consider $u=1\bar{v}1$. We can easily recover $\bar{v}$ from $u$ by removing the first and last $1$, and can further recover $v$ by decreasing 
all left to right maxima in
 $\bar{v}$ by one. We finish showing that $\alpha$ is injective by recovering $u$ from $w=\inc(u)$. Note that since $v$ avoids $1221$, its subword $r$ of all repeated letters is weakly increasing. The subword of all repeated letters in $u=1\bar{v}1$ is then $r1$.  Making this subword increasing results in the subword of all repeated letters in $w$ being $1r$. We can thus recover $u$ by replacing $1r$ in $w$ by $r1$. 

Next, we show that $w$ satisfies all three properties. Since $u=1\bar{v}1$ has more than one $1$ and ends in a repeated letter, the RGF $w=\inc(u)$ does as well. Property (i) is thus satisfied. Next we show property (ii) by first showing that $u$ satisfies property (ii). 
If $v_i$ is a repeated letter then we always have $v_i\leq \max\{v_1,\dots ,v_{i-1}\}$. Since we increased all first occurrences to get $\bar{v}$  and left the repeated letters the same we have $ \bar{v}_i< \max\{\bar{v}_1,\dots ,\bar{v}_{i-1}\}$. And clearly the two new ones in $u$ do not change this inequality.
As previously noted, the value in the place of a given repeated letter can only get weakly smaller in passing from $u$ to $w=\inc(u)$.
And since left to right maxima don't change, $w$ also satisfies  property (ii). Lastly, we will show property (iii). 
Consider the situation where $w_i w_{i+1}\dots w_j$  are all  first occurrences but $w_{i-1}$ and $w_{j+1}$ are  repeated letters. 
But then $w_{j+1}$ was in position $i-1$ in $u$ which is also a position in $\bar{v}$.  And the element in position $i$ of $u$ is $w_i$ which is a left to right maximum.  Since left to right maxima in $v$ were increased by one in passing from $v$ to $\bar{v}$ we have $w_{j+1}<w_i-1$ as desired.
\end{proof}

Our goal is to define a map $\beta:\cM^2_{n-1}\rightarrow R_n(1221)$ which maps $\area$ to $\lb$. Before we define $\beta$ we will discuss a partition of the region under $R=s_1 \dots s_{n-1}\in \cM^2_{n-1}$ which will aid us in this task. Figure~\ref{path} gives an example of this process where different shadings indicate parts of the partition.
Recall that $l(s_i)$ is the level, 
or smallest $y$-value, of $s_i$. If $s_i=D$, we define $A(s_i)$ to be equal to the area in the same row between $s_i$ and its paired up step but excluding the area under other down steps or $a$-steps. 
In Figure~\ref{path}, $A(s_5)=1$, $A(s_8)=A(s_{12})=2$ and $A(s_9)=5$.
The area under $R$ can be partitioned as follows. 
The  rectangle under an $a$-step  $s_i$ will be a part with  area $l(s_i)$. For example, in the figure we have the area $l(s_4)=2$.
Our other parts will be associated to down steps. Given a down step $s_i$, its part will consist of the region counted by $A(s_i)$ together with the rectangle of squares under the down step whose area  is given by $l(s_i)$, for a total area of $A(s_i)+l(s_i)$.  Returning to our example,  steps $s_5, s_8, s_9,$ and $s_{12}$ contribute total areas $2,3,5,$ and $2$ (respectively).
 Since this partitions all the region under $R$ we have 
\begin{equation}
\area(R)=\sum_{s_i=a}l(s_i)+\sum_{s_i=D}(A(s_i)+l(s_i)).
\label{eq:area(R)}
\end{equation}

Next we will define a map $\beta:\cM^2_{n-1}\rightarrow R_n(1221)$ such that $\area(R)=\lb(\beta(R))$. Before we define $\beta(R)$ we will define an RGF, $v(R)=v_1\dots v_n$, by letting $v_1=1$ and
$$
v_{i+1} = \left\{
\begin{array}{ll}
\max\{v_1,\dots, v_{i}\}+1 &\text{ if  $s_i$ equals $U$ or $b$},\\
\max\{v_1,\dots, v_{i}\}-l(s_i)& \text{ if $s_i=a$},\\
\max\{v_1,\dots, v_{i}\}-A(s_i)-l(s_i)& \text{ if $s_i=D$},
\end{array}
\right.
$$
for $i\ge0$.  For the two-colored Motzkin path $R$ in Figure~\ref{path} we have $v(R)=1234225631786$.

A comparison of the first case in the definition of $v$ with the other two shows that the left to right maxima of $v$ are consecutive integers starting at $1$.  So to show  that $v$ is an RGF we only have to prove that $v_{i+1}>0$ for all $s_i\in \{a,D\}$. Note that for all $i\geq 1$ we have that $\max\{v_1,\dots,v_i\}$ is equal to one more than the number of $b$-steps plus the number of up steps in the first $i-1$ steps. The level $l(s_i)$ of any horizontal step is at most the number of previous up steps, so for $s_i=a$ we have $v_{i+1}=\max\{v_1,\dots, v_{i}\}-l(s_i)>0$. Note that the area counted by $A(s_i)$ between $s_i=D$ and its corresponding up step excluding the area under other $a$-steps or down steps is at most the number of up steps plus $b$-steps between and including the paired up and down step. Also, the level of the down step is at most the number of up steps strictly before its paired up step. All together $A(s_i)+l(s_i)$ is at most the number of up steps and $b$-steps in the first $i-1$ steps. As result, for $s_i=D$  we have $v_{i+1}=\max\{v_1,\dots, v_{i}\}-A(s_i)-l(s_i)>0$. Hence, $v$ is an RGF. However, $v(R)$ may not avoid $1221$, so we define 
$$\beta(R)=\inc(v(R))$$
which avoids $1221$ by Lemma~\ref{1221structure}.
 For the two-colored Motzkin path $R$ in Figure~\ref{path} we have $\beta(R)=1234125623786$.

Next we show that $\area(R)=\lb(v)$ which will imply that $\area(R)=\lb(\beta(R))$ by Lemma~\ref{lbPreserved}. 
It is easy to see that $\lb(v_1)=0$ and if $s_i$ is $b$ or $U$ then $\lb(v_{i+1})=0$. 
Next consider $s_i=a$ so $v_{i+1}=\max\{v_1,\dots, v_{i}\}-l(s_i)$. By Equation~(\ref{eq:lb(w_i)}), we have $\lb(v_{i+1})=l(s_i)$.
Lastly, if $s_i=D$ then $v_{i+1}=\max\{v_1,\dots, v_{i}\}-A(s_i)-l(s_i)$. By Equation~(\ref{eq:lb(w_i)}) again, $\lb(v_{i+1})=A(s_i)+l(s_i)$. As a 
result 
$$\lb(v)=\sum_{s_i=a}l(s_i)+\sum_{s_i=D}(A(s_i)+l(s_i))=\area(R)$$
 by equation~\ref{eq:area(R)}.

 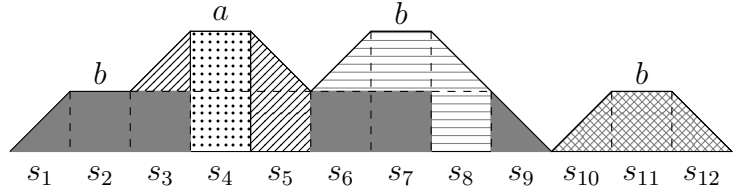
\begin{figure}
  \begin{center}
\begin{tikzpicture}[scale = .8]
\filldraw[pattern =  dots] (3,0) --  (4,0) -- (4,2) -- (3,2) -- (3,0);
\filldraw[pattern = north east lines] (2,1)--(3,1)--(3,2)--(2,1);
\filldraw[pattern = north east lines] (4,0)--(5,0)--(5,1)--(4,2)--(4,0);
\filldraw[pattern =  horizontal lines, pattern color = gray] (5,1)--(7,1)--(7,0)--(8,0)--(8,1)--(7,2)--(6,2)--(5,1);
\filldraw[gray] (0,0)--(3,0)--(3,1)--(1,1)--(0,0);
\filldraw[gray] (5,0)--(7,0)--(7,1)--(5,1)--(5,0);
\filldraw[gray](8,0)--(9,0)--(8,1)--(8,0);
\filldraw[pattern = crosshatch, pattern color = gray] (9,0)--(12,0)--(11,1)--(10,1)--(9,0);
\draw (0,0) --  (1,1) -- (2,1) -- (3,2) -- (4,2)--(5,1) --(6,2) --(7,2) --(8,1) --(9,0) --(10,1) --(11,1)--(12,0)  ;
\draw [dashed]  (2,1) -- (8,1);
\draw [dashed]  (1,1) -- (1,0);
\draw [dashed]  (2,1) -- (2,0);
\draw [dashed]  (3,2) -- (3,0);
\draw [dashed]  (4,2) -- (4,0);
\draw [dashed]  (5,1) -- (5,0);
\draw [dashed]  (6,2) -- (6,0);
\draw [dashed]  (7,2) -- (7,0);
\draw [dashed]  (8,1) -- (8,0);
\draw [dashed]  (10,1) -- (10,0);
\draw [dashed]  (11,1) -- (11,0);
%\filldraw [black] 
%(0,0) circle (5pt);
%\filldraw (0,0) --  (8,0) -- (8,8) -- (0,8) -- (0,0);
\draw (1.5,1.3) node {$b$};
\draw (3.5,2.3) node {$a$};
\draw (6.5,2.3) node {$b$};
\draw (10.5,1.3) node {$b$};
\draw(0.5,-.4) node{$s_1$};
\draw(1.5,-.4) node{$s_2$};
\draw(2.5,-.4) node{$s_3$};
\draw(3.5,-.4) node{$s_4$};
\draw(4.5,-.4) node{$s_5$};
\draw(5.5,-.4) node{$s_6$};
\draw(6.5,-.4) node{$s_7$};
\draw(7.5,-.4) node{$s_8$};
\draw(8.5,-.4) node{$s_9$};
\draw(9.5,-.4) node{$s_{10}$};
\draw(10.5,-.4) node{$s_{11}$};
\draw(11.5,-.4) node{$s_{12}$};
 \end{tikzpicture} 
 \caption{The area decomposition of a two-colored Motzkin path}
  \label{path}
   \end{center}
 \end{figure}

We now show that the $\beta$ map behaves nicely with respect to two of the usual decompositions of Motzkin paths.

\begin{lem} Let $P$ and $Q$ be two-colored Motzkin paths with  $\beta(P)=x$ and $\beta(Q)=1y$. The map $\beta$ has the following properties. 
\begin{enumerate}
\item[(1)] $\beta(PQ)=x(y+\max(x)-1)$. 
\item[(2)]  $\beta(UPD)=\alpha(x)$. 
\end{enumerate}
\label{beta map properties}
\end{lem}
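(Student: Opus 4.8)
The plan is to analyze both identities first at the level of the auxiliary word $v(R)$ (before the reordering map $\inc$ is applied) and only afterwards transfer the conclusion to $\beta=\inc\circ v$. This is natural because the levels $l(s_i)$, the quantities $A(s_i)$, and the up/down pairing of a two-colored Motzkin path all restrict cleanly to subpaths: when we concatenate $PQ$ the path returns to the $x$-axis between $P$ and $Q$, and when we form $UPD$ the inner copy of $P$ is simply translated up by one unit. In each case the only global effect on the recursion defining $v$ is a uniform shift of the running maximum $\max\{v_1,\dots,v_i\}$, and this is what I would exploit.

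For (1), write $M=\max(x)=\max(v(P))$; the two maxima agree because $\inc$ only rearranges repeated letters and fixes the left to right maxima. Since every step of $P$ keeps its level, its $A$-value and its partner inside $PQ$, and since the running maximum over the $Q$-block of $v(PQ)$ exceeds the corresponding running maximum of $v(Q)$ by exactly $M-1$, a case check on $U/b$, $a$ and $D$ steps gives $v(PQ)=v(P)\cdot\bigl((v(Q)\text{ with its leading }1\text{ deleted})+(M-1)\bigr)$. Now I would apply $\inc$. The first occurrences of $v(PQ)$ are $1,2,\dots,M$ in the $P$-block followed by $M+1,M+2,\dots$ in the $Q$-block, so the repeated-letter positions split between the two blocks as well. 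Every repeated letter of the $P$-block has value at most $M$, while every repeated letter of the $Q$-block has value at least $M$; hence the concatenation of the sorted $P$-repeats with the sorted $Q$-repeats is already weakly increasing, so it is exactly the rearrangement performed by $\inc$, and the two groups land in the $P$-positions and $Q$-positions respectively. Consequently the $P$-block of $\beta(PQ)$ is $\inc(v(P))=x$ and the $Q$-block is $y+M-1$, which is the assertion $\beta(PQ)=x(y+\max(x)-1)$.

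For (2), I claim $v(UPD)=1\,\overline{v(P)}\,1$, where $\overline{(\cdot)}$ is the bar operation introduced before Lemma~\ref{alpha map}. The leading $U$ forces $v_2=2$ and raises the inner copy of $P$ by one level, so inside $UPD$ each step of $P$ has its level increased by one while its $A$-value and partner are unchanged; together with the running maximum being one larger, the recursion yields $v(UPD)_{j+2}=v(P)_{j+1}+1$ when the corresponding step is $U$ or $b$ and $v(UPD)_{j+2}=v(P)_{j+1}$ otherwise, which is precisely $\overline{v(P)}_{j+1}$ (and $v_2=2=\overline{v(P)}_1$). For the final letter $c$, coming from the trailing $D$ paired with the leading $U$, I would use the identity $\area(R)=\lb(v(R))$ established above together with the elementary fact $\area(UPD)=\area(P)+|P|+1$. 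Evaluating $\lb\bigl(1\,\overline{v(P)}\,c\bigr)$ from Equation~(\ref{eq:lb(w_i)})—the leading $1$ and the incremented first occurrences contribute $0$, each repeated letter contributes one more than in $v(P)$, and $c$ contributes $(M+1)-c$—gives $\lb=\lb(v(P))+|P|+2-c$, and matching with $\area(UPD)=\lb(v(P))+|P|+1$ forces $c=1$.

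It then remains to identify $\inc(1\,\overline{v(P)}\,1)$ with $\alpha(x)=\inc(1\,\bar x\,1)$. Since the bar operation alters only first occurrences while $\inc$ only reorders repeated letters, the two commute, so $\bar x=\overline{\inc(v(P))}=\inc(\overline{v(P)})$; and because $\inc(w)$ depends only on the positions and values of the first occurrences of $w$ together with the multiset of its repeated letters, the words $1\,\overline{v(P)}\,1$ and $1\,\bar x\,1$ share all of this data and hence have equal images under $\inc$. This yields $\beta(UPD)=\inc(1\,\overline{v(P)}\,1)=\inc(1\,\bar x\,1)=\alpha(x)$. I expect the main obstacle to be precisely the treatment of the last letter of $v(UPD)$: showing that the outermost down step produces the value $1$ (equivalently $A(\text{outer }D)=\max(x)$) is the one place where a purely local reading of the recursion does not suffice, and one must invoke the global area identity or, alternatively, count directly the bottom-row cells not claimed by the inner down steps and horizontal steps. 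The secondary point needing care is the interaction of $\inc$ with concatenation and with the bar operation, in particular the value separation at the threshold $M$ that prevents the weakly increasing sort from crossing a block boundary.
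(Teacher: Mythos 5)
Your proof is correct and follows the same overall strategy as the paper's: establish both identities at the level of the auxiliary word $v(R)$ (showing $v(PQ)=v(P)\,(q+\max(v(P))-1)$ and $v(UPD)=1\,\overline{v(P)}\,1$) and then transfer to $\beta$ by observing that $\inc$ cannot move repeated letters across the block boundary because every repeated letter in the $Q$-block is at least as large as every one in the $P$-block. You diverge from the paper in exactly one sub-step: to show that the last letter of $v(UPD)$ equals $1$, the paper computes directly that the outer down step has $l=0$ and that its $A$-value equals the total number of up steps and $b$-steps in $UPD$, which cancels against the running maximum; you instead invoke the previously established identity $\area(R)=\lb(v(R))$ together with $\area(UPD)=\area(P)+|P|+1$ and solve for the unknown letter. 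Both are valid; the paper's computation is local and self-contained, while yours trades that for a slick global bookkeeping argument (at the cost of needing the area identity as a prerequisite, which the paper does prove beforehand, so nothing circular occurs). A second, smaller difference is that you explicitly justify $\inc(1\,\overline{v(P)}\,1)=\inc(1\,\bar{x}\,1)=\alpha(x)$ even though $v(P)$ need not itself avoid $1221$, by noting that $\inc$ depends only on the positions and values of the first occurrences together with the multiset of repeated letters; the paper asserts this step as immediate, so your added care here is a genuine (and welcome) clarification rather than a deviation.
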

\begin{proof} 
To prove statement (1), we first claim that  
$$v(PQ)=v(P)(q+\max(v(P))-1)$$
where $q$ is $v(Q)$ with its initial $1$ deleted.  
It is clear from the definition of $v$ that the first $|P|+1$ positions of $v(PQ)$ are $v(P)$.
Also by definition of $v$, the  first occurrences other than the initial $1$ are in bijection with the union of the up steps and $b$-steps.  It follows that the subword of first occurrences in the  last $|Q|$ positions of $v(PQ)$ is the same as the corresponding subword in $q$ with all elements increased by $\max(v(P)) -1$.  Thus the maximum value in any prefix of $v(PQ)$ ending in these positions is increased over the the corresponding maximum in $q$ by this amount.  Furthermore, the areas and levels of down steps and $a$-steps in $Q$  in that portion of $PQ$ are the same since $P$ ends on the $x$-axis.  So, using the definition of $v$ for these types of steps, the last $|Q|$ positions of $v(PQ)$ are exactly $q'=q+\max(v(P)) -1$.  To prove the equation for $\beta$, it suffices to show that the $\inc$ operator only permutes elements within $v(P)$ and within $q'$.  But this is true because all elements of $q'$ are greater than or equal to those of $v(P)$. 

To prove the second statement, first consider $v:=v(P)=v_1\dots v_k$ and $u:=v(UPD)=u_1\dots u_{k+2}$.  We claim that $u=1\bar{v}1$. Clearly $u$ begins with a $1$.  To see it must also end with $1$, note that since the last step of 
$UPD=s_1\dots s_{k+1}$ is down step and this path does not touch the axis between its initial and final points, we have $l(s_{k+1})=0$ and $A(s_{k+1})$ is the total number of up steps and $b$-steps in $UPD$.  It now follows from the definition of the map $v$ and our interpretation of the maximum of a prefix that $u_{k+2}=1$.  Let $u'$ be $u$ with its initial and final $1$'s removed.  To see that $u'=\bar{v}$, first note that every step of $UPD$ except the first is preceded by one more up step than in $P$.  It follows every first occurrence  of $v$ is increased by one in passing to $u'$.  But the area under each $a$-step and under each down step also increases by one during that passage.  So the differences defining the $v$-map in such cases will stay the same for these repeated entries.  It should now be clear that $u'=\bar{v}$.  It follows immediately that $\beta(UPD)=\inc(1\bar{v}1)=\alpha(x)$. 
\end{proof}

Before we show that $\beta$  is a bijection, we will 
need a method for determining from the image of a path where that path first returns to the $x$-axis.  The following lemma will provide the key.

\begin{lem}
Given paths $P\in \cM_{k-3}^2$ and $Q$ with $k\geq 3$, the word $\beta(UPDQ)=w$ has $w_k$ as the right-most repeated letter such that $w_1\dots w_k$ satisfies all three properties in  Lemma~\ref{alpha map}.
\label{beta map properties 2}
\end{lem}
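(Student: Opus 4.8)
The plan is to locate the first return of $UPDQ$ to the $x$-axis, which occurs at the end of the initial arch $UPD$ (a path of length $k-1$, hence at word-position $k$), and to show that this position is characterized exactly as claimed. First I would pin down the prefix $w_1\dots w_k$ explicitly. Regarding $UPDQ=(UPD)Q$ as a concatenation of two-colored Motzkin paths and applying Lemma~\ref{beta map properties}(1) together with Lemma~\ref{beta map properties}(2), I obtain $w_1\dots w_k=\beta(UPD)=\alpha(\beta(P))$, so that $w_1\dots w_k$ lies in the image of $\alpha$. By Lemma~\ref{alpha map} this prefix therefore satisfies properties (i)--(iii), and property (i) gives that $w_k$ is a repeated letter. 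This settles that $w_k$ is a repeated letter for which $w_1\dots w_k$ satisfies all three properties.

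It then remains to prove that $w_k$ is the \emph{rightmost} such index. The key quantitative input is that, writing $M=\max\{w_1,\dots,w_k\}$ and $\beta(Q)=1y$, Lemma~\ref{beta map properties}(1) gives $w_{k+1}\dots w_n=y+(M-1)$; since the entries of $y$ are positive, every letter in positions $k+1,\dots,n$ has value $\ge M$. Moreover $w_1\dots w_k=\alpha(\beta(P))$ is an RGF with maximum $M$, so it realizes every value in $\{1,\dots,M\}$ and its left-to-right maxima are exactly $1,2,\dots,M$.

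Now let $j_0>k$ be the first position beyond $k$ carrying a repeated letter; if none exists then $Q$ contributes no repeated letter and the claim is immediate. I would show that $w_1\dots w_{j_0}$ violates property (ii) or (iii). If $j_0=k+1$, then $w_{k+1}$ is repeated, hence not a left-to-right maximum; since every earlier letter is $\le M$, a value exceeding $M$ would be a new maximum, so in fact $w_{k+1}=M=\max\{w_1,\dots,w_k\}$, which violates property (ii). If instead $j_0>k+1$, then $w_{k+1},\dots,w_{j_0-1}$ are all first occurrences, so $w_{k+1}=M+1$; applying property (iii) to this maximal run of first occurrences, bounded by the repeated letters $w_k$ and $w_{j_0}$, would force $w_{j_0}<w_{k+1}-1=M$, contradicting $w_{j_0}\ge M$. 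Finally, each of these violations refers only to positions $\le j_0$, so it persists in every longer prefix $w_1\dots w_m$ with $m\ge j_0$; since every repeated letter beyond position $k$ sits at an index $\ge j_0$, no such prefix can satisfy all three properties, and $w_k$ is indeed the rightmost as claimed.

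The main obstacle I anticipate is the bookkeeping in the last paragraph: determining the value of $w_{k+1}$ (which relies on the first block exhausting $\{1,\dots,M\}$ and on $\inc$ fixing left-to-right maxima) and verifying that the property violations genuinely persist to all longer prefixes rather than merely holding at $j_0$. By contrast, the identification $w_1\dots w_k=\alpha(\beta(P))$ and the value bound $w_i\ge M$ for $i>k$ are routine consequences of Lemmas~\ref{beta map properties} and~\ref{alpha map}.
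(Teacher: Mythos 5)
Your proposal is correct and follows essentially the same route as the paper: identify the prefix $w_1\dots w_k=\alpha(\beta(P))$ via Lemma~\ref{beta map properties}, invoke Lemma~\ref{alpha map} for properties (i)--(iii), and then show that the first repeated letter after position $k$ already forces a violation of property (ii) (when it sits at position $k+1$, where it must equal $m$) or property (iii) (when it comes later, using $w_{k+1}=m+1$ and the lower bound $w_i\ge m$ from the shift by $m-1$), a violation which persists in all longer prefixes. The only cosmetic difference is that you spell out the reduction to the first repeated letter and the persistence of the violation slightly more explicitly than the paper does.
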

\begin{proof}
Given a path $R=UPDQ\in \cM_{n-1}^2$ as stated, by Lemma~\ref{beta map properties} we know that if we write $\beta(Q)=1q$ then 
\beq
\label{decomp}
w=\beta(R)=\alpha(\beta(P))(q+m-1)
\eeq
where $m=\max(\alpha(\beta(P)))$.
Lemma~\ref{alpha map} implies that the prefix $w_1\dots w_k=\alpha(\beta(P))$ satisfies all three properties.
So it suffices to show that if there exists another repeated letter $w_i$ after $w_k$ then $w_1\dots w_i$ fails propertry (ii) or property (iii). In particular,  it suffices to show such a failure for the prefix where $w_i$ is the next repeated letter after $w_k$ since any other prefix under consideration contains $w_1\dots w_i$.  

If $i=k+1$ then, since every element of $q$ is increased by $m-1$ and $w_{k+1}$ is repeated, we must have $w_{k+1}=m=\max\{w_1,\dots,w_k\}$, contradicting property (ii).
If instead $i>k+1$ then $w_{k+1}$ is a first occurrence and $w_{k+1}=\max\{w_1,\dots,w_k\}+1=m+1$.  By definition of $w_i$, we have that $w_{k+1},\dots, w_{i-1}$ are all first occurrences with $w_k$ and $w_i$ repeated letters.  Note that all elements in $q$ were at least $1$ and then  increased by $m-1$, so we must have $w_{i}\geq m=w_{k+1}-1$ which contradicts property (iii). 
\end{proof}

It will be helpful for us to be able to refer to the special repeated letter mentioned in the lemma above.  So, given an RGF $w=w_1\dots w_n$, if there exists a  right-most repeated letter  $w_k$  such that $w_1\dots w_k$ satisfies all three properties in Lemma~\ref{alpha map} then we will say that  $w_k$ {\it breaks the word} $w$.  Note that if such a repeated letter exists, its index $k$ is unique.

\begin{thm}
The map $\beta:\cM^2_{n-1}\rightarrow R_n(1221)$ is a bijection and $\area(R)=\lb(\beta(R))$.
\label{2colorMotzkin_R_n(1221)}
\end{thm}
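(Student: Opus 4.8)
The identity $\area(R)=\lb(\beta(R))$ has already been verified in the paragraph preceding the theorem (we showed $\area(R)=\lb(v(R))$, and $\lb(v(R))=\lb(\inc(v(R)))=\lb(\beta(R))$ by Lemma~\ref{lbPreserved}), so the plan is to concentrate entirely on proving that $\beta$ is a bijection. I would argue this by strong induction on $n$, the base case $n=1$ being the empty path mapping to the one-letter RGF $1$. Since $|\cM^2_{n-1}|=C_n=|R_n(1221)|$ (the former standard, the latter because $R_n(1221)=w(\text{LNN}_n)$ is counted by the Catalan numbers), it suffices at each stage to partition both sides compatibly and check that $\beta$ restricts to a bijection on each block.

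On the domain I would partition $\cM^2_{n-1}$ (for $n\ge2$) by the first step, which is $a$, $b$, or $U$, since a Motzkin path cannot begin with $D$. On the codomain I would partition $R_n(1221)$ into $W_a=\{w:w_1w_2=11\}$, $W_b=\{w:w\text{ has a unique }1\}$, and $W_U=\{w:w_1w_2=12 \text{ and } w \text{ has at least two }1\text{'s}\}$; because $w_1=1$ and $w_2\in\{1,2\}$ for every RGF, these three sets partition $R_n(1221)$. The $a$- and $b$-blocks are the easy cases: using $\beta(a)=11$, $\beta(b)=12$ together with Lemma~\ref{beta map properties}(1), one computes $\beta(aR')=1\,\beta(R')$ and $\beta(bR')=1(\beta(R')+1)$. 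Since the inductive hypothesis gives a bijection $\beta:\cM^2_{n-2}\to R_{n-1}(1221)$, and since $w'\mapsto 1w'$ and $w'\mapsto 1(w'+1)$ are bijections onto $W_a$ and $W_b$ respectively, $\beta$ restricts to bijections on the $a$- and $b$-blocks.

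The heart of the argument is the $U$-block, and here I would prove only that $\beta$ maps the $U$-paths injectively into $W_U$; equality of cardinalities then forces this restriction to be onto $W_U$, which avoids having to show independently that every $w\in W_U$ possesses a breaking letter. The containment $\beta(UPDQ)\in W_U$ is immediate: by Lemma~\ref{beta map properties}(2) the prefix is $\alpha(\beta(P))=\inc(1\overline{\beta(P)}\,1)$, which begins with $12$ (its second letter is the first occurrence $\overline{\beta(P)}_1=2$, left fixed by $\inc$) and contains at least two $1$'s by property~(i) of Lemma~\ref{alpha map}, while the appended suffix has all entries at least $\max(\alpha(\beta(P)))\ge2$ and so contributes no further $1$'s. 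For injectivity, suppose $\beta(UP_1DQ_1)=\beta(UP_2DQ_2)=w$. By Lemma~\ref{beta map properties 2} each decomposition exhibits the breaking letter of $w$, whose index is unique, so the two break indices coincide at some $k$; this forces $|P_1|=|P_2|=k-3$ and $w_1\dots w_k=\alpha(\beta(P_1))=\alpha(\beta(P_2))$. Injectivity of $\alpha$ (Lemma~\ref{alpha map}) and the inductive bijectivity of $\beta$ yield $P_1=P_2$, and then Lemma~\ref{beta map properties}(1) shows the common suffix $w_{k+1}\dots w_n$ equals $q_i+\max(w_1\dots w_k)-1$ with $\beta(Q_i)=1q_i$; cancelling the shift gives $q_1=q_2$, whence $Q_1=Q_2$ by induction. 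Thus $UP_1DQ_1=UP_2DQ_2$, and since the three blocks match up block-by-block, $\beta$ is a bijection.

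The main obstacle I anticipate is the bookkeeping in the $U$-block: one must be careful that the break index supplied by Lemma~\ref{beta map properties 2} really is the unique index guaranteed by the definition of ``breaks the word'' (so that the two indices are equal), and that the sub-paths $P_i\in\cM^2_{k-3}$ and $Q_i$ correspond to RGF lengths strictly smaller than $n$ so that the inductive hypothesis genuinely applies; this last point relies on $k\ge3$. Everything else reduces to the already-established identities for $\beta$ in Lemma~\ref{beta map properties} and the injectivity of $\alpha$, so I expect no further difficulty.
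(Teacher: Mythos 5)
Your proposal is correct and follows essentially the same route as the paper's proof: induction on $n$, reduction of bijectivity to injectivity via $|\cM^2_{n-1}|=C_n=|R_n(1221)|$, a three-way split according to whether the path begins with $a$, $b$, or $U$, and, in the $U$-case, the use of Lemma~\ref{beta map properties}, the uniqueness of the breaking letter from Lemma~\ref{beta map properties 2}, and the injectivity of $\alpha$. The only cosmetic difference is that you make the codomain partition $W_a$, $W_b$, $W_U$ explicit, whereas the paper argues that the images of the three cases are pairwise distinct; the substance is identical.
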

\begin{proof} We have already shown that $\beta$ is a well-defined map and that $\area(R)=\lb(\beta(R))$. 
Since $|\cM^2_{n-1}|=C_n=|R_n(1221)|$, to show $\beta$ is a bijection it suffices to show
 $\beta$ is injective.
We prove this by induction on $n$. It is easy to see that $\beta$ is an injection for $n\leq 2$. We now assume that $n>2$ and $\beta: \cM^2_{k-1}\rightarrow R_k(1221)$ is injective
  for all $k<n$.

We will discuss three cases for paths $R\in \cM_{n-1}^2$ and in each case we will show that  $R$ maps to an RGF distinct from the other RGFs in that case and also from the RGFs in previous cases.

First consider all paths $R$ which start with an $a$-step so that $R=aQ$ for some path $Q$. By Lemma~\ref{beta map properties}, we have $\beta(R)=11y$ where $\beta(Q)=1y$.  Injectivity of the map now follows from the fact that, by induction, it is injective on paths $Q$ of length $n-2$.

Our second case consists of paths $R$ of the form $R=bQ$.  Now $\beta(R)=12(y+1)$ with $y$ as above.  Clearly these are distinct from the words in the previous paragraph and injectivity within this case follows by induction as before.

For the last case, consider all paths $R$ which start with an up step so we can write $R=UPDQ$ for paths $P\in \cM_{k-3}^2$ and $Q$ where $k\geq 3$. By Lemma~\ref{beta map properties} we have equation~(\ref{decomp}),
and by Lemma~\ref{beta map properties 2}  the repeated letter $w_k$  breaks the word $w$. Note that because $\alpha(\beta(P))=w_1\dots w_k$ satisfies property (i) in Lemma~\ref{alpha map},
$w$ has more than one $1$ and so can not agree with a word from the second case above.  But since $R$ starts with an up step, $w$ starts with the prefix $12$ and so can not be a word from the first case.  Finally, by uniqueness of the index of $w_k$, the injectivity of the map $\alpha$, and induction the word $w$ is uniquely determined among all words in this case.  This finishes the proof that $\beta$ is injective.
\end{proof}

Combining the previous result with Corollary~\ref{rs1212=M} and the definition of $M_n(q)$ in~\ref{M_n(q)} we have the following corollary.

\begin{cor} 
\label{LB(1221)=M(q)}
We have

\medskip

\eqed{\LB_n(1221)=\RS_n(1212)=M_{n-1}(q).}
\end{cor}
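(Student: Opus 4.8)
The plan is to observe that this statement is a formal consequence of the bijection already in hand, so no new combinatorial work is required. The key input is Theorem~\ref{2colorMotzkin_R_n(1221)}, which provides a bijection $\beta:\cM^2_{n-1}\rightarrow R_n(1221)$ with the property that $\area(R)=\lb(\beta(R))$ for every $R\in\cM^2_{n-1}$. My first step would be to write down the definition of the generating function and transport the sum across $\beta$:
$$
\LB_n(1221)=\sum_{w\in R_n(1221)}q^{\lb(w)}=\sum_{R\in\cM^2_{n-1}}q^{\lb(\beta(R))}=\sum_{R\in\cM^2_{n-1}}q^{\area(R)},
$$
where the second equality uses that $\beta$ is a bijection onto $R_n(1221)$ (so $w$ ranges over $R_n(1221)$ exactly as $R$ ranges over $\cM^2_{n-1}$) and the third uses the area-preservation property $\area(R)=\lb(\beta(R))$. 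The right-hand side is precisely $M_{n-1}(q)$ by the definition in~(\ref{M_n(q)}) with $n$ replaced by $n-1$, giving $\LB_n(1221)=M_{n-1}(q)$.

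Next I would invoke Corollary~\ref{rs1212=M}, which already establishes $\RS_n(1212)=M_{n-1}(q)$ for all $n\ge1$. Chaining this with the equality just obtained yields
$$
\LB_n(1221)=M_{n-1}(q)=\RS_n(1212),
$$
which is exactly the asserted triple equality.

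I do not expect any genuine obstacle here: all of the substantive effort—constructing $\beta$, verifying it is a well-defined map into $R_n(1221)$, proving injectivity by induction, and checking that it carries $\area$ to $\lb$—has been carried out in Theorem~\ref{2colorMotzkin_R_n(1221)}, and the identification $\RS_n(1212)=M_{n-1}(q)$ was settled in Corollary~\ref{rs1212=M}. The only thing to be careful about is the index shift between $n$ and $n-1$ (the paths have length $n-1$ while the RGFs have length $n$), but this matches the statement of the theorem and the definition of $M_{n-1}(q)$, so the bookkeeping is routine.
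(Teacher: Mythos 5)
Your proposal is correct and is exactly the paper's argument: the corollary is deduced by transporting $\lb$ across the bijection $\beta$ of Theorem~\ref{2colorMotzkin_R_n(1221)} to identify $\LB_n(1221)$ with $M_{n-1}(q)$ as defined in~(\ref{M_n(q)}), and then chaining with $\RS_n(1212)=M_{n-1}(q)$ from Corollary~\ref{rs1212=M}. The index bookkeeping you flag (paths of length $n-1$ versus RGFs of length $n$) is handled the same way in the paper, so there is nothing further to add.
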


%%%%%%%%%%%%%%%%%%%%%%%%%%%%%%%%%%%%
\subsection{Combinations with other patterns}

Next we consider the RGFs which avoid $1221$ and another length three pattern. Since $121$ and $122$ are subwords of $1221$ these cases are not interesting, so we will focus on $111$, $112$, and $123$. 

\begin{thm} We have for $L_n:=\LB_n(111,1221)$ that $L_0=L_1=1$ and, for $n\ge2$,
and 
$$L_n=L_{n-1}+L_{n-2}+\sum_{k=1}^{n-2}q^{k}L_{k-1}L_{n-k-1}$$
\end{thm}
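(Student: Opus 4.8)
The plan is to transport the whole problem to two-colored Motzkin paths through the bijection $\beta\colon\cM^2_{n-1}\to R_n(1221)$ of Theorem~\ref{2colorMotzkin_R_n(1221)}, which satisfies $\area(R)=\lb(\beta(R))$. Writing $S_{n-1}=\beta^{-1}(R_n(111,1221))$, we have $L_n=\sum_{R\in S_{n-1}}q^{\area(R)}$, so it suffices to decompose $\cM^2_{n-1}$ by the first return of the path to the axis and, in each piece, to identify exactly which paths have $\beta$-image avoiding $111$. The base cases $L_0=L_1=1$ are immediate.

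First I would split $\cM^2_{n-1}$ into paths of the form $bQ$, $aQ$, and $UPDQ$. By Lemma~\ref{beta map properties} we have $\beta(bQ)=1(\beta(Q)+1)$ and $\beta(aQ)=11y$, where $\beta(Q)=1y$, while $\beta(UPDQ)=\alpha(\beta(P))\,(y+m-1)$ with $m=\max\alpha(\beta(P))$. Since prepending a single $1$ and shifting all entries up preserves $111$-avoidance, the $b$-case contributes precisely the paths with $\beta(Q)\in R_{n-1}(111,1221)$; because a $b$-step sits at level $0$ we get $\area(bQ)=\area(Q)$, and this case sums to $L_{n-1}$. For the $a$-case, $\beta(aQ)=11y$ avoids $111$ exactly when $y$ contains no $1$, i.e. when $\beta(Q)=1(u+1)$ with $u\in R_{n-2}(111,1221)$; again $\area(aQ)=\area(Q)=\lb(u)$, so this case sums to $L_{n-2}$.

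The heart of the argument is the $UPDQ$-case, which should produce the convolution. Setting $k=|P|+1$, so that $\beta(P)\in R_k(1221)$, Lemma~\ref{alpha map} tells us that the prefix $\alpha(\beta(P))$ of $w$ obeys the three structural properties listed there. The key claim is that $\alpha(\beta(P))$ avoids $111$ if and only if $\beta(P)$ has a unique $1$, equivalently $\beta(P)=1(A+1)$ with $A\in R_{k-1}(111,1221)$: a second $1$ in $\beta(P)$ is a repeated letter, hence survives unchanged into $\bar v$, and together with the two framing $1$'s introduced by $\alpha$ this forces three $1$'s. Granting this, together with the analogous requirement $\beta(Q)\in R_{n-k-1}(111,1221)$ for the tail, the standard area accounting for wrapping a path gives $\area(UPD)=\area(P)+k$, and since $Q$ begins a fresh excursion from level $0$ we obtain $\area(UPDQ)=k+\area(P)+\area(Q)$. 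Reading this through $\area=\lb$ and Lemma~\ref{beta map properties}, the case contributes $\sum_{k=1}^{n-2}q^kL_{k-1}L_{n-k-1}$, and adding the three cases yields the stated recursion.

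The main obstacle will be the $111$-avoidance bookkeeping at the seam of the concatenation $w=\alpha(\beta(P))\,(y+m-1)$. The two blocks share only the value $m=\max\alpha(\beta(P))$, so I must verify that the number of copies of $m$ coming from the prefix and from the shifted tail never exceeds two. This is exactly where Lemma~\ref{beta map properties 2} and the precise statement of Lemma~\ref{alpha map}(i)--(iii) are needed: they pin down how often the maximum can occur in $\alpha(\beta(P))$ and guarantee that the three case-characterizations are mutually exclusive and jointly exhaustive on $S_{n-1}$. Once this boundary analysis is settled, the remainder is routine substitution and re-indexing.
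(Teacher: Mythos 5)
Your proposal is correct and follows essentially the same route as the paper: the same first-return decomposition of $\cM^2_{n-1}$ into $aQ$, $bQ$, and $UPDQ$, transported through $\beta$, with the $111$-condition tracked via the multiset of letters of $\alpha(\beta(P))$ and of the shifted tail. The one caveat is that your two ``if and only if'' claims (``$y$ contains no $1$'', ``$\beta(P)$ has a unique $1$'') are in isolation only half of the condition---one also needs every other letter to occur at most twice---but your ``equivalently'' clauses supply the full requirement, and the seam bookkeeping you flag as remaining is precisely the multiset computation the paper carries out.
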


\begin{proof}
Let $\cN_{n}$ be the collection of two-colored Motzkin paths $R\in \cM_{n}^2$ such that $\beta(R)$ avoids $111$. 
Define $N_{-1}(q)=1$ and, for $n\ge 0$,
$$N_{n}(q)=\sum_{R\in {\cN}_{n}^2}q^{\area(R)}.$$
By Theorem~\ref{2colorMotzkin_R_n(1221)} we only need to  show that $N_n(q)=\LB_{n+1}(111,1221)$ satisfies an equivalent recurrence and initial conditions.  By definition 
$N_{-1}(q)=1$, and $N_0(q)=1$ because of the empty path.  So we wish to show that for $n\ge1$
\beq
\label{N(q)Sum}
N_{n}(q)=N_{n-1}(q)+N_{n-2}(q)+\sum_{k=0}^{n-2}q^{k+1}N_{k-1}(q)N_{n-k-2}(q).
\eeq

We  partition $\cM_{n}^2$ as in the proof of Theorem~\ref{2colorMotzkin_R_n(1221)}:
\begin{align*}
X&=\{R=aQ :\ Q\in \cM^2_{n-1}\},\\
Y&=\{R=bQ :\ Q\in \cM^2_{n-1}\},\\
Z&=\{R=UPDQ :\ P\in \cM^2_{k},\ Q\in \cM^2_{n-k-2}\text{ and } k\in[0,n-2]\}.
\end{align*}
We claim that when we restrict this  partition to paths in $\cN_n$ we have
\begin{align*}
X_\cN&=\{R=abQ :\ Q\in \cN_{n-2}\},\\
Y_\cN&=\{R=bQ :\ Q\in \cN_{n-1}\},\\
Z_1&=\{R=UDQ :\ Q\in \cN_{n-2}\},\\
Z_2&=\{R=UbPDQ :\ P\in \cN_{k-1},\ Q\in \cN_{n-k-2}\text{ and } k\in [n-2]\},
\end{align*}
where the set $Z$ breaks into two subsets. From the second partition
we will be able to deduce  the desired recursion.

Consider a path $R=aQ\in S$. We claim that $\beta(R)$ avoids $111$ if and only if   $Q=bQ'$ for $Q'\in \cN_{n-2}$ which will show that $X$ restricts to $X_\cN$.
If we write $\beta(Q)=1y$ we have $\beta(R)=11y$. The word $\beta(R)$ avoids $111$ if and only if the the word $y$ has no $1$'s and at most two copies of every other number.
Note that the second case considered in Theorem~\ref{2colorMotzkin_R_n(1221)} contained all paths which started with a $b$-step and that these paths were mapped bijectively to words with exactly one $1$.   
It is also clear that $y=\beta(Q')+1$ has at most two copies of every number greater than one if and only if the same is true of $\beta(R)$.  The claim now follows.
Because $\area(R)=\area(Q')$ summing over all paths in this case gives us the term $N_{n-2}(q)$.

If instead $R=bQ\in T$ then, using that notation of  Lemma~\ref{beta map properties}, 
$\beta(R)=12(y+1)=1(\beta(Q)+1)$.  So $\beta(R)$ avoids $111$ if and only if $\beta(Q)$ does.
It follows that $Y$ restricts to $Y_\cN$.
Because $\area(R)=\area(Q)$ summing over all paths in this case gives us the term $N_{n-1}(q)$.

Next, we  consider paths $R=UPDQ$ from the third part, $U$. First consider the case where $P$ has length $0$ so $R=UDQ$. We want to prove that $\beta(R)$ avoids $111$ if and only if $\beta(Q)$ avoids $111$ since this will show that the  collection of paths in $Z$ with $|P|=0$  restricts to $Z_1$. If we write $\beta(Q)=1y$  we have $\beta(R)=121(y+1)$.  Thus $\beta(Q)$ avoids $111$ if and only if $\beta(R)$ does and the restriction is as claimed.
Because $\area(R)=1+\area(Q)$ summing over all paths in this case gives us the term $qN_{n-2}(q)$
which is the $k=0$ term in equation~(\ref{N(q)Sum}).

Lastly, consider a path $R=UPDQ$ with $|P|=k\in[n-2]$ which are the remaining paths in $Z$.  We will show that $\beta(R)$ avoids $111$ if and only if $P=bP'$ and both the words $\beta(P')$ and $\beta(Q)$ avoid $111$. This will show that the remaining paths in $Z$ restrict to $Z_2$ in the second partition. 
First we make an observation about $\alpha(\beta(P))$. Let $m=\max(\beta(P))$ and  $\{1^{s_1},\dots,m^{s_m}\}$ be the multiset of all letters in $\beta(P)$.  The map $\alpha$  increases all first occurrences by one and adds two $1$'s but otherwise doesn't affect the collection of letters. So the multiset of letters in $\alpha(\beta(P))$ is  $\{1^{s_1+1},\dots,m^{s_m},m+1\}$. 
If we write $\beta(Q)=1y$ then  we have $\beta(R)=\alpha(\beta(P))(y+m)$ since $m=\max(\alpha(\beta(P)))-1$. 
If $\{1^{t_1},\dots,\bar{m}^{t_{\bar{m}}}\}$ is the multiset of letters in $\beta(Q)$ then the multiset of letters in $\beta(R)$ is $\{1^{s_1+1},\dots,m^{s_m},(m+1)^{t_1},\dots, (m+\bar{m})^{t_{\bar{m}}}\}$. So  $\beta(R)$ avoids $111$ if and only if there are at most two of any element in this set which is equivalent to $s_1=1$, $s_i\leq 2$ for $i>1$, and $t_i\leq 2$ for all $i\geq 1$. Further this implies that $\beta(R)$ avoids $111$ if and only if  $Q\in \cN_{n-k-2}$ and $\beta(P)$ has exactly one $1$ and avoids $111$. Just as in our first case, $\beta(P)$ has exactly one $1$ and avoids $111$ if and only if $P=bP'$ for some $P'\in \cN_{k-1}$. 
Because $\area(R)=\area(P')+\area(Q)+k+1$ summing over all paths in this case gives us the term $q^{k+1}N_{k-1}(q)N_{n-k-2}(q)$ for $k>0$.
This completes the proof of the theorem.
\end{proof}

The next two avoidance classes can be characterized by a combination of Theorem~\ref{rgf:len3} and Lemma~\ref{1221structure}. The proofs are straightforward and  so  not included.

\begin{prop} We have  
$$R_n(112,1221)=\{12\dots m k^{n-m} :\ k\in [m]\}.$$ As such, for $n\geq 0$ we have
\begin{enumerate}
\item $\displaystyle F_n(112,1221)=\sum_{m=1}^n\sum_{k=1}^m q^{(n-m)(m-k)}r^{\binom{m}{2}+(n-m)(k-1)}s^{\binom{m}{2}}t^{m-k},$
\item $\displaystyle LB_n(112,1221)=\sum_{m=1}^n\sum_{k=1}^m q^{(n-m)(m-k)},$
\item $\displaystyle  \LS_n(112,1221)=\sum_{m=1}^n\sum_{k=1}^m q^{\binom{m}{2}+(n-m)(k-1)},$
\item $\displaystyle \RB_n(112,1221)=\sum_{m=1}^nm q^{\binom{m}{2}},$ and
\item $\displaystyle \RS_n(112,1221)=\sum_{i=1}^n iq^{n-i}.$\hqed
\end{enumerate}
\end{prop}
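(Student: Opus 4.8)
The plan is to first pin down $R_n(112,1221)$ as a set and then read off all five formulas by computing the four Wachs--White statistics on the explicit words that occur.

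For the set description I would simply intersect the two characterizations already available. By part~(2) of Theorem~\ref{rgf:len3}, a word $w\in R_n(112)$ consists of an initial run $12\dots m$ (so that $m=\max(w)$) followed by a weakly decreasing tail $w_{m+1}\ge\dots\ge w_n$ whose entries all lie in $[m]$. In such a word the first occurrences are exactly the run entries $1,\dots,m$, so the subword of repeated letters is precisely the tail $w_{m+1}\dots w_n$. By Lemma~\ref{1221structure}, $w$ avoids $1221$ if and only if that subword is weakly increasing. A word which is at once weakly decreasing and weakly increasing is constant, so the tail must be $k^{n-m}$ for a single $k\in[m]$ (and is empty when $m=n$); this yields $w=12\dots m\,k^{n-m}$. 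Conversely every such word is an RGF that is unimodal with constant tail and whose repeated letters form the weakly increasing string $k^{n-m}$, hence lies in $R_n(112,1221)$.

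Next I would evaluate the four statistics on a generic $w=12\dots m\,k^{n-m}$, which is pure bookkeeping once one keeps track of \emph{distinct} larger/smaller values. The run contributes nothing to $\lb$, while each tail letter $k$ has exactly the $m-k$ distinct larger values $k+1,\dots,m$ to its left, so $\lb(w)=(n-m)(m-k)$. Since $\ls(w_i)=w_i-1$ for every $i$, summing over the run and the tail gives $\ls(w)=\binom{m}{2}+(n-m)(k-1)$. Only the run contributes to $\rb$, giving $\rb(w)=\binom{m}{2}$; and the only positions having a strictly smaller value to their right are the run entries exceeding $k$, giving $\rs(w)=m-k$. Collecting these into the monomial $q^{\lb(w)}r^{\ls(w)}s^{\rb(w)}t^{\rs(w)}$ and summing over all admissible pairs $(m,k)$ produces $F_n(112,1221)$. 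The four one-variable identities then follow by specialization: $r=s=t=1$ gives $\LB_n$; $q=s=t=1$ gives $\LS_n$; $q=r=t=1$ gives $\RB_n=\sum_{m}m\,q^{\binom{m}{2}}$, the inner sum over $k$ contributing $m$ equal terms; and $q=r=s=1$ gives $\RS_n=\sum_{m=1}^{n}[m]_q=\sum_{i=1}^{n}i\,q^{n-i}$.

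The one point that needs care is the degenerate case $m=n$, where the tail is empty and the words $12\dots n\,k^{0}$ all collapse to the single increasing word $12\dots n$. I expect this boundary term to be the main thing to watch when turning the set into the displayed double sum, since for $n-m=0$ the parameter $k$ no longer indexes distinct words (the fully increasing word carries $\lb=\rs=0$ and $\ls=\rb=\binom{n}{2}$, and should be recorded by the single monomial $r^{\binom{n}{2}}s^{\binom{n}{2}}$). Every term with $m<n$ is unambiguous and equals the per-word monomial computed above, so apart from this boundary bookkeeping the computation is routine.
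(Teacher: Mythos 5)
Your route is exactly the one the paper intends: it omits the proof entirely, remarking only that the characterization follows ``by a combination of Theorem~\ref{rgf:len3} and Lemma~\ref{1221structure},'' which is precisely your intersection argument, and your evaluations of the four statistics on $w=12\dots m\,k^{n-m}$ (namely $\lb(w)=(n-m)(m-k)$, $\ls(w)=\binom{m}{2}+(n-m)(k-1)$, $\rb(w)=\binom{m}{2}$, $\rs(w)=m-k$) are all correct.

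The boundary issue you flag, however, is not mere bookkeeping: it is a genuine discrepancy between the words and the displayed sums, and you should follow it to its conclusion rather than leave it as a caveat. When $m=n$ the $n$ pairs $(n,k)$ all index the single word $12\dots n$, so the set $R_n(112,1221)$ has $\binom{n}{2}+1$ elements while the double sums have $\binom{n+1}{2}$ terms; the formulas as printed therefore count $12\dots n$ with multiplicity $n$ and, worse, assign the phantom copies the monomials $t^{n-k}$ with $k<n$, which correspond to no word. Concretely, for $n=3$ one has $R_3(112,1221)=\{111,121,122,123\}$ with $\rs$-values $0,1,0,0$, so $\RS_3(112,1221)=3+q$, whereas the displayed formula gives $\sum_{i=1}^{3}iq^{3-i}=3+2q+q^2$; similarly the displayed $\LB_3$ has constant term $5$ rather than $3$. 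The proof you outline actually establishes the corrected identities in which the $m=n$ inner sum is replaced by the single term $r^{\binom{n}{2}}s^{\binom{n}{2}}$ (so, e.g., $\RS_n(112,1221)=1+\sum_{m=1}^{n-1}[m]_q$), not the statements as printed, and a complete write-up must either make that replacement or restrict the outer sum to $m\le n-1$ and add the term for $12\dots n$ separately.
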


\begin{prop} We have $$R_n(123,1221)=\{1^n,11^{i}21^j2^{k}:\ i+j+k=n-2, \text{ and } i,j,k\ge0\}.$$
As such, for $n\geq 0$ we have, using the truth function $\chi(S)=1$ if $S$ is true or $0$ if $S$ is false,
\begin{enumerate}
\item $\displaystyle F_n(123,1221)=1+\sum_{i+j+k=n-2\atop i,j,k\ge 0} q^j r^{k+1}s^{i+1+j\cdot\chi(k>0)}t^{\chi(j>0)},$
\item $\displaystyle \LB_n(123,1221)=1+\sum_{j=0}^{n-2}(n-j-1)q^j,$
\item $\displaystyle  \LS_n(123,1221)=1+\sum_{k=0}^{n-2}(n-k-1)q^{k+1},$
\item $\displaystyle \RB_n(123,1221)=1+q^{n-1}+\sum_{k=1}^{n-2}(k+1)q^{k},$
and
\item $\displaystyle \RS_n(123,1221)=n+\binom{n-1}{2}q.$\hqed
\end{enumerate}
\label{R(123,1221)}
\end{prop}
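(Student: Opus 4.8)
The plan is to first pin down the set $R_n(123,1221)$ explicitly and then read each statistic directly off the resulting word shapes. For the characterization I would begin from Theorem~\ref{rgf:len3}(5), which says a word avoids $123$ exactly when it uses only the letters $1$ and $2$. So every $w\in R_n(123,1221)$ is a word on $\{1,2\}$ beginning with $1$. If $w$ contains no $2$, then $w=1^n$. Otherwise write $w=1^{i+1}2\,w'$, where $1^{i+1}$ is the maximal initial block of ones (so $i\ge 0$) and $w'$ is whatever follows the first $2$. By Lemma~\ref{1221structure}, $w$ avoids $1221$ iff its subword of repeated letters is weakly increasing. Since the first $1$ and the first $2$ are the only first occurrences, that repeated-letter subword is $1^{i}$ followed by all of $w'$, so it is weakly increasing precisely when $w'$ is, i.e. when $w'=1^j2^k$. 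Hence $w=1^{i+1}21^j2^k$ with $i+j+k=n-2$. The reverse containment is the easy direction: any such word uses only $1,2$ and has repeated subword $1^{i+j}2^k$, which is weakly increasing.

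Next I would compute the four Wachs--White statistics on the generic word $w=1^{i+1}21^j2^k$. Because $w$ uses only the values $1$ and $2$, each statistic is simple: a letter equal to $1$ can contribute only to $\lb$ or $\rb$ (through a $2$ on the appropriate side), and a letter equal to $2$ can contribute only to $\ls$ or $\rs$ (through a $1$). Tracking the two blocks of $2$'s against the two blocks of $1$'s gives
$$\lb(w)=j,\qquad \ls(w)=k+1,\qquad \rb(w)=(i+1)+j\cdot\chi(k>0),\qquad \rs(w)=\chi(j>0),$$
while $1^n$ contributes $0$ to every statistic. Indeed, the $j$ ones lying to the right of the first $2$ each see a larger letter to their left, so $\lb(w)=j$; every one of the $k+1$ twos sees a $1$ to its left, so $\ls(w)=k+1$; a one contributes to $\rb$ exactly when some $2$ lies to its right, which holds for all $i+1$ initial ones and for the $1^j$ block precisely when $k>0$; and a two contributes to $\rs$ only when a $1$ lies to its right, which only the middle $2$ can achieve, and only when $j>0$.

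Finally I would assemble the generating functions by summing $q^{\lb(w)}r^{\ls(w)}s^{\rb(w)}t^{\rs(w)}$ over all $(i,j,k)$ with $i+j+k=n-2$ and adding the $1$ from $1^n$; this yields the stated formula for $F_n(123,1221)$ immediately. The univariate formulas then follow by collecting terms. For $\LB_n$, fix $j$ and count the $n-1-j$ pairs $(i,k)$ with $i+k=n-2-j$, giving $1+\sum_{j=0}^{n-2}(n-j-1)q^j$; for $\LS_n$, fix $k$ symmetrically. For $\RS_n$, note $\rs\in\{0,1\}$, so one only separates the words with $j=0$ (together with $1^n$, numbering $n$ in all) from those with $j\ge1$, the latter numbering $\binom{n-1}{2}$, which gives $n+\binom{n-1}{2}q$.

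The one computation needing a little care is $\RB_n$, which I expect to be the main (though still routine) obstacle. For the words with $k=0$ one has $\rb=i+1$ ranging over $1,\dots,n-1$, contributing $\sum_{m=1}^{n-1}q^m$. For $k\ge1$ there is a pleasant collapse: since $i+j=n-2-k$ and $\chi(k>0)=1$, we get $\rb=(i+1)+j=n-1-k$, depending only on $k$, so these words contribute $\sum_{k=1}^{n-2}(n-1-k)q^{n-1-k}=\sum_{m=1}^{n-2}mq^m$. Adding the $1$ from $1^n$ and combining the two sums via $\sum_{m=1}^{n-1}q^m=q^{n-1}+\sum_{m=1}^{n-2}q^m$ yields $1+q^{n-1}+\sum_{k=1}^{n-2}(k+1)q^k$, as claimed. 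Everything beyond this bookkeeping is immediate from the explicit word shapes.
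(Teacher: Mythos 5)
Your proposal is correct and follows exactly the route the paper indicates: the characterization comes from combining Theorem~\ref{rgf:len3}(5) with Lemma~\ref{1221structure}, and the generating functions are read off the explicit word shapes $1^n$ and $1^{i+1}21^j2^k$ (the paper omits these details as routine, and your computations of $\lb$, $\ls$, $\rb$, $\rs$ and the resulting specializations, including the $\RB_n$ bookkeeping, all check out). Nothing further is needed.
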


%%%%%%%%%%%%%%%%%%%%%%%%%%%%%%%%%%%%
\subsection{More about the pattern $1212$}

It turns out that the generating function $\LB_n(1212)$ is also equal to $M_{n-1}(q)$.  Instead of showing this directly, we prove that $\LB_n(1212)=\LB_n(1221)$ and then Corollary~\ref{LB(1221)=M(q)} completes the proof.  In the process we also show $\LS_n(1212)=\LS_n(1221)$.
\begin{prop}
The restriction $\inc:R_n(1212)\rightarrow R_n(1221)$ is a bijection which preserves $\lb$ and $\ls$. 
\label{bijection1221to1212}
\end{prop}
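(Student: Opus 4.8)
The plan is to handle the two statistic-preservation claims with tools already in place and then to upgrade $\inc$ to a bijection by combining injectivity with a cardinality count. For $\lb$ nothing new is required: by definition $\inc(w)$ is a rearrangement of $w$ that leaves the left-to-right maxima in their original positions, so Lemma~\ref{lbPreserved} gives $\lb(\inc(w))=\lb(w)$ for every $w\in R_n$, in particular on $R_n(1212)$. For $\ls$ I would recall that in any RGF the values $1,2,\dots,\max\{w_1,\dots,w_{i-1}\}$ all appear before position $i$, so that $\ls(w_i)=w_i-1$ (as noted in the proof of Proposition~\ref{connectLS112}). Hence $\ls(w)=\sum_i(w_i-1)$ depends only on the multiset of entries of $w$, which $\inc$ does not change, giving $\ls(\inc(w))=\ls(w)$.

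For the bijection itself, I would first observe that $\inc$ sends $R_n(1212)$ into $R_n(1221)$ simply because $\inc$ maps all of $R_n$ onto $R_n(1221)$. Since $\#R_n(1212)=C_n=\#R_n(1221)$ (both sets are equinumerous with $\cM^2_{n-1}$ by Theorems~\ref{1212_with_2cmotz} and~\ref{2colorMotzkin_R_n(1221)}), it then suffices to prove the restriction is injective. So I would take $w,w'\in R_n(1212)$ with $\inc(w)=\inc(w')=v$ and aim to show $w=w'$. Because $\inc$ fixes first occurrences, both $w$ and $w'$ share the first occurrences of $v$ (same positions and values) and share the same total multiset of repeated letters (that of $v$); in particular they agree at every first-occurrence position.

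Suppose $w\neq w'$ and let $i$ be the least index with $w_i\neq w'_i$; by the previous sentence this must be a repeated position, so I can write $w_i=a$, $w'_i=b$ with $a\neq b$, and assume $a<b$ after possibly swapping the two words. The key step is then to locate a forbidden subword. Since $w$ and $w'$ coincide strictly before $i$ and have equal total multisets of repeated letters, the multisets of repeated letters occupying positions $\ge i$ coincide; consequently the copy of $b$ that $w'$ places at $i$ is placed by $w$ at some position $j>i$. As $a$ and $b$ are repeated at position $i$, their first occurrences satisfy $f_a<i$ and $f_b<i$, and $f_a<f_b$ because an RGF introduces smaller values first. Thus in $w$ the entries at positions $f_a<f_b<i<j$ read $a\,b\,a\,b$, which standardizes to $1212$, contradicting $w\in R_n(1212)$. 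This forces $w=w'$, so the restriction is injective and hence a bijection.

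I expect the only genuinely delicate point to be the extraction of that $a\,b\,a\,b$ occurrence — specifically verifying that the index $i$ is necessarily a repeated position and that the four positions $f_a<f_b<i<j$ are correctly ordered with the claimed values; the rest is bookkeeping that follows from the definition of $\inc$ and Lemma~\ref{lbPreserved}. Before finalizing I would double-check the edge behavior (that a repeated value at position $i$ really has its first occurrence strictly earlier, and that the exchanged copy of $b$ cannot land at a first-occurrence position), since the whole argument hinges on those orderings.
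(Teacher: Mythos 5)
Your proposal is correct and follows essentially the same route as the paper: the statistic preservation via Lemma~\ref{lbPreserved} and the identity $\ls(w_i)=w_i-1$, and injectivity via the Catalan cardinality count plus extracting an $abab$ (i.e.\ $1212$) occurrence in the word carrying the smaller letter at the first position of disagreement. Your verification of the orderings $f_a<f_b<i<j$ is exactly the argument the paper gives, just spelled out in slightly more detail.
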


\begin{proof}
By Lemma~\ref{1221structure}  we have  $\lb(w)=\lb(\inc(w))$. This map also preserves $\ls$ because $w$ and $\inc(w)$ are rearrangements of each other and $\ls(w_i) = w_i-1$ for any RGF $w$.

Now we only need to show that $\inc:R_n(1212)\rightarrow R_n(1221)$ is bijective. Since $|R_n(1212)|=C_n=|R_n(1221)|$ it suffices to show the map is injective. Assume that $v=v_1v_2\dots v_n$ and $w=w_1\dots w_n$ are two distinct words which avoid $1212$, but $\inc(v)=\inc(w)$. This means that $v$ and $w$ share the same positions of first occurrences, and the same multiset of repeated letters. 
But since $v\neq w$ there is then a smallest index $i\geq 1$ such that 
$v_1\dots v_{i-1}=w_1\dots w_{i-1}$ but $v_i\neq w_i$. Without loss of generality let $v_i=x$, $w_i=y$, and $x<y$. We have noted that $v$ and $w$ have their first occurrences at the same indices, so $v_i$ and $w_i$ must be repeated letters. Since $w$ is an RGF, the first occurrence of $x$ and $y$ must occur before $w_i$, so $v$ also has the subword $xy$ before $v_i$. However, because $v$ and $w$ have the same collection of repeated letters and agree up to position $i-1$, the $y$ which is $w_i$ in $w$ must occur some time after $v_i$ in $v$. This means that $v$ has the subword $xyxy$ contradicting Lemma~\ref{noncross_avoid}. 
\end{proof}

\begin{cor} For $k\geq 0$ we have
$$F_n(1212;q,r,1,1)=F_n(1221;q,r,1,1),$$
$$F_n(1^k,1212;q,r,1,1)=F_n(1^k,1221;q,r,1,1),$$
and
$$F_n(12\dots k,1212;q,r,1,1)=F_n(12\dots k,1221;q,r,1,1).$$
\label{1221 and 1212 equalities}
\end{cor}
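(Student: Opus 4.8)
The plan is to derive all three identities from the single map $\inc$ together with Proposition~\ref{bijection1221to1212}. First I would note that setting $s=t=1$ in the definition of $F_n$ gives $F_n(v;q,r,1,1)=\sum_{w\in R_n(v)}q^{\lb(w)}r^{\ls(w)}$, so each identity simply asserts that the joint distribution of $(\lb,\ls)$ is the same over the two avoidance classes being compared. The first equality $F_n(1212;q,r,1,1)=F_n(1221;q,r,1,1)$ is then immediate from Proposition~\ref{bijection1221to1212}: the bijection $\inc:R_n(1212)\to R_n(1221)$ preserves both $\lb$ and $\ls$, hence preserves the monomial $q^{\lb}r^{\ls}$ term by term.

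For the two two-pattern identities, the key point is that $\inc$ only rearranges the subword of repeated letters while fixing the left-to-right maxima, so (as recorded in the proof of Lemma~\ref{lbPreserved}) it preserves the multiset $\{w_1,\dots,w_n\}$ of letters. I would then translate each extra forbidden pattern into a condition depending only on this multiset. A word $w$ contains $1^k$ exactly when some value occurs at least $k$ times, so avoidance of $1^k$ is equivalent to every value occurring at most $k-1$ times, a condition on the multiset of letters alone. Likewise, since the left-to-right maxima of an RGF are precisely $1,2,\dots,\max(w)$, the longest strictly increasing subword of $w$ has length $\max(w)$; hence $w$ contains $12\dots k$ if and only if $\max(w)\ge k$, so avoidance of $12\dots k$ is equivalent to $\max(w)\le k-1$, again a condition on the multiset alone.

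Because $\inc$ preserves the multiset of letters, for $w\in R_n(1212)$ the word $w$ satisfies either multiset condition if and only if $\inc(w)$ does, and the same holds for $\inc^{-1}$. Consequently $\inc$ restricts to bijections $\inc:R_n(1^k,1212)\to R_n(1^k,1221)$ and $\inc:R_n(12\dots k,1212)\to R_n(12\dots k,1221)$. Since the restricted maps still preserve $\lb$ and $\ls$, the remaining two identities follow exactly as in the first case.

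I do not anticipate a serious obstacle here; the only step requiring care is the characterization of $12\dots k$-avoidance via $\max(w)$, which rests on the fact that in an RGF the first occurrences of the values $1,2,\dots,\max(w)$ already form a strictly increasing subword of maximal possible length. Everything else reduces to the observation, already available from the proof of Lemma~\ref{lbPreserved}, that $\inc$ leaves the multiset of letters unchanged.
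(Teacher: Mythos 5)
Your proposal is correct and follows essentially the same route as the paper: the paper's own proof simply observes that $\inc$ preserves the multiplicity of every letter and the maximum letter, and notes that avoidance of $1^k$ and of $12\dots k$ depend only on these data. You have merely spelled out the details (the characterizations of $1^k$- and $12\dots k$-avoidance and the restriction of the bijection) that the paper leaves implicit.
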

\begin{proof}
The bijection $f$ in Proposition~\ref{bijection1221to1212} preserves the number of times any integer  appears and preserves the maximum integer which appears. The equalities follow from this fact. 
\end{proof}

Using Corollary~\ref{rs1212=M}, 
Propositions~\ref{R(123,1212)} and~\ref{R(123,1221)}, and Corollaries~\ref{LB(1221)=M(q)} and~\ref{1221 and 1212 equalities} we have the following equalities which summarizing results in this section.

\begin{cor} 
\label{1212,1221}
We have, for $n\ge0$,
$$\LB_n(1212)=\RS_n(1212)=\LB_n(1221)=M_{n-1}(q),$$
$$\LS_n(1212)=\LS_n(1221),$$
$$\LB_n(111,1212)=\LB_n(111,1221),$$
$$\LS_n(111,1212)=\LS_n(111,1221),$$
$$\LB_n(123,1212)=\RS_n(123,1212)=\LB_n(123,1221),$$
and

\medskip

\eqed{\LS_n(123,1212)=\RB_n(123,1212)=\LS_n(123,1221).}
\end{cor}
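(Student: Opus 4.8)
The plan is to assemble the six displayed equalities by specializing the bivariate generating-function identities of Corollary~\ref{1221 and 1212 equalities} and invoking the single-statistic results already established. Recall that $F_n(v;q,r,s,t)$ records $(\lb,\ls,\rb,\rs)$, so putting $s=t=1$ and then $r=1$ recovers $\LB_n(v)=F_n(v;q,1,1,1)$, while putting $s=t=1$ and then $q=1$ recovers $\LS_n(v)=F_n(v;1,r,1,1)$ (with $r$ as the tracking variable, which we may rename $q$). Thus each equality will be read off from an appropriate specialization or from a direct citation.

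For the first line, $\RS_n(1212)=M_{n-1}(q)$ is Corollary~\ref{rs1212=M} and $\LB_n(1221)=\RS_n(1212)=M_{n-1}(q)$ is Corollary~\ref{LB(1221)=M(q)}; the remaining equality $\LB_n(1212)=\LB_n(1221)$ is obtained by setting $r=1$ in the identity $F_n(1212;q,r,1,1)=F_n(1221;q,r,1,1)$ of Corollary~\ref{1221 and 1212 equalities} (equivalently, it is immediate from Proposition~\ref{bijection1221to1212}, since $\inc$ preserves $\lb$). The second line, $\LS_n(1212)=\LS_n(1221)$, comes from the same identity by instead setting $q=1$ (or again directly from Proposition~\ref{bijection1221to1212}, since $\inc$ preserves $\ls$).

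The third and fourth lines are the $k=3$ instances of $F_n(1^k,1212;q,r,1,1)=F_n(1^k,1221;q,r,1,1)$ from Corollary~\ref{1221 and 1212 equalities}, using $1^3=111$: setting $r=1$ yields $\LB_n(111,1212)=\LB_n(111,1221)$, and setting $q=1$ yields $\LS_n(111,1212)=\LS_n(111,1221)$. For the last two lines, the internal equalities $\LB_n(123,1212)=\RS_n(123,1212)$ and $\LS_n(123,1212)=\RB_n(123,1212)$ are read off directly from Proposition~\ref{R(123,1212)}; the cross-pattern equalities $\LB_n(123,1212)=\LB_n(123,1221)$ and $\LS_n(123,1212)=\LS_n(123,1221)$ follow from the $k=3$ instance of $F_n(12\dots k,1212;q,r,1,1)=F_n(12\dots k,1221;q,r,1,1)$ in Corollary~\ref{1221 and 1212 equalities} (specialized at $r=1$ and at $q=1$ respectively), or simply by comparing the closed forms computed in Propositions~\ref{R(123,1212)} and~\ref{R(123,1221)}.

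Since every constituent statement is already proved, there is no genuine obstacle here: the corollary is purely a matter of collation. The only point requiring care is bookkeeping, namely tracking which specialization ($r=1$ versus $q=1$) of each bivariate identity produces the $\lb$- rather than the $\ls$-equality, and remembering that the $\RS$ and $\RB$ identities appearing in the final two lines must be supplied by Proposition~\ref{R(123,1212)} rather than by the $F_n$ identities, which only track $\lb$ and $\ls$.
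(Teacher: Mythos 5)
Your proposal is correct and matches the paper's own treatment: the paper presents this corollary as a direct collation of Corollary~\ref{rs1212=M}, Propositions~\ref{R(123,1212)} and~\ref{R(123,1221)}, and Corollaries~\ref{LB(1221)=M(q)} and~\ref{1221 and 1212 equalities}, exactly the sources you cite. Your extra bookkeeping about which specialization ($r=1$ versus $q=1$) of the $F_n$ identities yields the $\lb$- versus $\ls$-equalities is accurate and just makes explicit what the paper leaves implicit.
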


We note that Simion~\cite{sim:csn} also proved $\LB_n(1212)=\RS_n(1212)$ by different means.  In addition, she showed the following.
\bth[\cite{sim:csn}]
\label{LSRB1212}
We have, for $n\ge0$,

\medskip

\eqed{LS_n(1212)=RB_n(1212).}
\eth

%%%%%%%%%%%%%%%%%%%%%%%%%%%%%%%%%%%%

\section{Comments and open problems}
\label{cop}

We list some further possible lines of research in the hopes that the reader may be interested to pursue them.

\medskip

(1) {\bf Longer patterns.}  In Sections~\ref{rfl}, \ref{p1212}, and~\ref{p1221} we have begun the study of patterns of length four or more, but there are almost certainly more interesting results for such patterns.    For example, for noncrossing partitions it would be interesting to see if the polynomial $LS_n(1212)=RB_n(1212)$ can be viewed as the generating function for a statistic over two-colored Motzkin paths.  And here is a  specific conjecture for nonnesting patterns.

\bcon
The coefficients of $RB_n(1221)$ stabilize in the following sense.  Given $k$ there is a bound $N_k$ such that for $n\ge N_k$ the coefficient of $q^k$ in $\RB_n(1221)$ is constant.
\econ

\medskip

(2)  {\bf Vincular patterns.}  In the theory of permutation patterns a {\em vincular} or {\em generalized} pattern is one where copies of the pattern in a larger permutation are required to have certain elements adjacent.  One can indicate such elements by underlining them.  For example, a copy of the pattern $21$ is an inversion while a copy of the pattern $\ul{21}$ is a descent.  In~\cite{bs:gpp},  Babson and Steingr{\'{\i}}msson initiated the study of such patterns and showed that a wide array of  well-known permutation statistics could be realize as linear combinatorics of functions counting vincular patterns.  One can also consider patterns where certain integers which are numerically adjacent in the pattern must be numerically adjacent in the copy and indicate these by an overline.  So $\ol{21}$ would count inversions consisting of an element $k$ followed by $k-1$.  
And, of course, one could combine positional and numerical adjacency.
It seems probable that studying vincular RGF patterns would yield interesting enumerative results.

\medskip

(3)  {\bf Equidistribution.}  In their original paper, Wachs and White~\cite{ww:pqs} proved that $\lb$ and $\rs$ are equidistributed (have the same generating function)  over the set of all RGFs of length $n$ with maximum $m$.   They also showed that $\ls$ and $\rb$ are equidistributed over the same set of RGFs.  We have seen similar behavior in Theorems~\ref{connectLB112}, \ref{connectLS112RB122}, \ref{RS122LBRS123}, and~\ref{LSRB1212} as well as  Corollaries~\ref{sym} and~\ref{1212,1221}.  It would be very interesting to derive some of these results from  more general theorems which would guarantee equidistribution for a large number of avoidance classes.

\medskip

(4)  {\bf  Mahonian pairs.}  When considering $\st$-Wilf equivalence, one has a single statistic which  has  the same generating function over two different avoidance classes.  When considering equidistribution, one has two different statistics which have the same generating function over a given avoidance class.  Obviously, one could generalize both notions by considering one statistic on an avoidance class and a second statistic on another class.  For the permutation statistics given by the major index, $\maj$, and inversion number, $\inv$, this concept was first studied by Sagan and Savage~\cite{ss:mp}.  Such pairs of statistics and classes were called {\em Mahonian pairs} since $\maj$ and $\inv$ both have the Mahonian distribution over the full symmetric group.  In the present work, we have found such equalities in the results cited in (3) as well as in Theorem~\ref{connectLS122} and
Corollary~\ref{1221 and 1212 equalities}.   Again, a more general explanation of when such identities occur would be desirable. 

\medskip

(5) {\bf Other statistics.}  There are other statistics related to the four we have been studying.  Given an integer sequence $w=w_1\dots w_n$, Simion and Stanton~\cite{ss:olp} considered a statistic counting smaller elements both to the left and the right of each $w_j$ by letting
$$
\lrs(w_j) =\#\{x<w_j\ :\ \text{there are $i<j<k$ with $w_i=w_k=x$}\}
$$
and $\lrs(w)=\sum_j \lrs(w_j)$.  Note that if $w$ is an RGF then $\lrs(w)=\rs(w)$.  They also looked at an analogous statistic for counting bigger elements, as well as refinements of both statistics obtained by restricting them to certain elements of $w$ related to first occurrences and repeated elements.  Their motivation came from studying a generalization of the Laguerre polynomials.  In the process, they obtained results about these statistics on noncrossing and nonnesting  RGFs.  It would be interesting to investigate these statistics in relation to other patterns.

\medskip

{\em Acknowledgment.} We would like to thank Anisse Kasraoui and Dennis Stanton for interesting comments and important references.

%\nocite{*}
%\bibliographystyle{alpha}

\newcommand{\etalchar}[1]{$^{#1}$}

\end{document}